\theoremstyle{plain}
\newtheorem{theorem}{\bf Theorem}[section]
\newtheorem{proposition}[theorem]{\bf Proposition}
\newtheorem{lemma}[theorem]{\bf Lemma}
\newtheorem{corollary}[theorem]{\bf Corollary}
\theoremstyle{definition}
\newcommand{\N}{\mathbb N}
\newcommand{\Z}{\mathbb Z}
\newcommand{\R}{\mathbb R}
\newcommand{\Q}{\mathbb Q}
\renewcommand{\P}{\mathbb P}
\newcommand{\LK}{\,[\![}
\newcommand{\RK}{]\!]}
 \DeclareMathOperator{\ord}{ord}
 \DeclareMathOperator{\supp}{supp}
\renewcommand{\time}{\negthinspace \times \negthinspace}
\newcommand{\red}{{\text{\rm red}}}
\renewcommand{\t}{\, | \,}
\numberwithin{equation}{section}
\begin{document}

\title{A characterization of class groups via sets of lengths}

\author{Alfred Geroldinger  and Wolfgang A. Schmid}

\thanks{This work was supported by
the Austrian Science Fund FWF, Project Number P26036-N26, by the Austrian-French Amad{\'e}e Program FR03/2012, and  by the ANR Project Caesar, Project Number ANR-12-BS01-0011.}

\keywords{Krull monoids,    class groups,   arithmetical characterizations, sets of lengths, zero-sum sequences}

\subjclass[2010]{11B30, 11R27, 13A05, 13F05, 20M13}

\begin{abstract}
Let $H$ be a Krull monoid  with class group $G$ such that every class contains a prime divisor. Then every nonunit $a \in H$ can be written as a finite product of irreducible elements. If $a=u_1 \cdot \ldots \cdot u_k$, with irreducibles $u_1, \ldots,  u_k \in H$, then $k$ is called the length of the factorization and the set $\mathsf L (a)$ of all possible $k$ is  the set of lengths of $a$. It is well-known that the system $\mathcal L (H) = \{\mathsf L (a) \mid a \in H \}$ depends only on the class group $G$. We study the inverse question asking whether  the system $\mathcal L (H)$ is characteristic for the class group. Let $H'$ be a further Krull monoid  with class group $G'$ such that every class contains a prime divisor and suppose that $\mathcal L (H) = \mathcal L (H')$. We show that, if one of the groups $G$ and $G'$ is finite and has rank at most two, then $G$ and $G'$ are isomorphic (apart from two well-known exceptions).
\end{abstract}

\maketitle


\section{Introduction} \label{1}

Let $H$ be a  cancellative semigroup with unit element. If an element $a \in H$ can be written as a product of $k$ irreducible elements, say $a = u_1 \cdot \ldots \cdot u_k$, then $k$ is called the length of the factorization. The set $\mathsf L (a)$ of all possible factorization lengths is  the set of lengths of $a$, and $\mathcal L (H) = \{ \mathsf L (a) \mid a \in H \}$ is called the  system of sets of lengths of $H$. Clearly, if $H$ is factorial, then $|\mathsf L (a)|=1$ for each  $a \in H$. Suppose there is some $a \in H$ with $|\mathsf L (a)| > 1$, say $k, l \in \mathsf L (a)$ with $k < l $. Then, for every $m \in \N$, we observe that $\mathsf L (a^m) \supset \{km + \nu (l-k) \mid \nu \in [0,m] \}$ which shows that sets of lengths can become arbitrarily large. Under mild conditions on the ideal theory of $H$ every nonunit of $H$, has a factorization into irreducibles and  all sets of lengths are finite.

Sets of lengths (together with parameters controlling their structure) are the most investigated invariants in factorization theory. They occur in settings ranging from numerical monoids,  noetherian domains, monoids of ideals and of modules to maximal orders in central simple algebras (for recent progress see \cite{Ba-Wi13a, C-F-G-O16, Ge16c}).
The focus of the present paper is on Krull monoids  with finite class group such that every class contains a prime divisor. Rings of integers in algebraic number fields are such Krull monoids, and classical notions from algebraic number theory (dating back to the 19th century) state that the class group determines the arithmetic of the ring of integers. This idea has been formalized and justified. In the 1970s Narkiewicz posed the inverse question whether or not arithmetical phenomena (in other words, phenomena describing the non-uniqueness of factorizations) characterize the class group (\cite[Problem 32; page 469]{Na74}). Very quickly first affirmative answers were given by Halter-Koch, Kaczorowski, and Rush (\cite{Ka81b, HK83a,Ru83}). Indeed, it is not too difficult to show that the system of sets of factorizations determines the class group (\cite[Sections 7.1 and 7.2]{Ge-HK06a}).

These answers are not really satisfactory because the given characterizations are based on rather abstract arithmetical properties  which play only  little role in other parts of factorization theory. Since, on the other hand, sets of lengths are of central interest in factorization theory, it is natural to ask whether their structure is rich enough to do characterizations.

Let $H$ be a commutative Krull monoid with finite class group $G$ and suppose that every class contains a prime divisor (recall that an integral domain is a Krull domain if and only if its monoid of nonzero elements is a Krull monoid). It is classical that  $H$ is factorial if and only if $|G|=1$, and by a result due to  Carlitz in 1960 we know that all sets of lengths are singletons (i.e., $|L|=1$ for all $L \in \mathcal L (H)$) if and only if $|G| \le 2$. Let us suppose now that $|G| \ge 3$.
Then the monoid $\mathcal B (G)$ of zero-sum sequences over $G$ is again a Krull monoid with class group isomorphic to $G$, every class contains a prime divisor, and  $\mathcal L (H) = \mathcal L \big(B (G) \big)$ (as usual, we set $\mathcal L (G) = \mathcal L \big( \mathcal B (G) \big)\big)$. The Characterization Problem can be formulated as follows (\cite[Section 7.3]{Ge-HK06a}, \cite[page 42]{Ge-Ru09}, \cite{Sc09b}).

\begin{enumerate}
\item[]
{\it Given two finite abelian groups $G$ and $G'$ such that $\mathcal L(G) = \mathcal L(G')$.
Does it follow that $G \cong G'$?}
\end{enumerate}

The system of sets of lengths $\mathcal L  (G)$ for finite abelian groups is studied with methods from Additive Combinatorics. Zero-sum theoretical invariants, such as the Davenport constant, play a central role. Recall that, although the precise value of the Davenport constant is well-known for $p$-groups and for groups of rank at most two (see Proposition \ref{2.3}), its precise value is unknown in general (even for groups of the form $G=C_n^3$). Thus it is not surprising that all answers to the Characterization Problem so far have been restricted to very special groups including cyclic groups,  elementary $2$-groups, and groups of the form $C_{n} \oplus C_{n}$ (\cite{Ga-Ge00, Sc09c}). Apart from two well-known (trivial) pairings, the answer is always positive. Starting from $C_n \oplus C_n$,  Zhong studied the Characterization Problem for groups of the
form $C_n^r$ in a series of papers (\cite{Ge-Zh17b, Zh18a, Zh18b}).
The goal of the present paper is to settle the Characterization Problem for groups of rank at most two. Here is our main result.

\begin{theorem} \label{1.1}
Let $G$ be an abelian group such that $\mathcal L (G) = \mathcal L ( C_{n_1}  \oplus C_{n_2})$ where $n_1, n_2 \in \N$ with $n_1 \t n_2$ and $n_1+n_2 > 4$. Then $G \cong C_{n_1} \oplus C_{n_2}$.
\end{theorem}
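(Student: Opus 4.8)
The plan is to recover a small collection of arithmetical invariants directly from the system $\mathcal L(G')$, to evaluate them on $C_{n_1}\oplus C_{n_2}$ using Proposition \ref{2.3}, and then to prove that $C_{n_1}\oplus C_{n_2}$ is the only group realizing all of them. The first invariant is the Davenport constant. Since $\mathcal L(C_{n_1}\oplus C_{n_2})$ contains non-singleton sets of lengths (because $|C_{n_1}\oplus C_{n_2}|\ge 3$), so does $\mathcal L(G')$, whence $|G'|\ge 3$ and $\rho_2(G')=\mathsf D(G')$; as the elasticities $\rho_k=\sup\mathcal U_k$ are visibly determined by $\mathcal L(G')$, the hypothesis gives $\mathsf D(G')=\mathsf D(C_{n_1}\oplus C_{n_2})=n_1+n_2-1$. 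The assumption $n_1+n_2>4$ forces $\mathsf D(G')\ge 4$, so $G'$ is never one of the small groups $C_1, C_2, C_3, C_2\oplus C_2$ underlying the two pairings of Proposition \ref{4.2}, and the target may be treated as ``large''.

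The second invariant is the set of distances $\Delta(G')$, which is literally the union, over all $L\in\mathcal L(G')$, of the sets of successive differences of $L$, and hence is determined by $\mathcal L(G')$. For the cyclic target ($n_1=1$, $n_2\ge 4$) this already finishes the argument: we obtain $\Delta(G')=\Delta(C_{n_2})=[1,n_2-2]$, the full interval $[1,\mathsf D(G')-2]$, and by the classical result that (for $|G'|\ge 3$) the equality $\Delta(G')=[1,\mathsf D(G')-2]$ forces $G'$ cyclic or $G'\cong C_2\oplus C_2$ — the latter excluded by $\mathsf D(G')=n_2\ge 4$ — the group $G'$ is cyclic, whence $\mathsf D(G')=n_2$ yields $G'\cong C_{n_2}$. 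For the genuinely rank-two target ($n_1\ge 2$) I would instead use the known values of $\max\Delta$ and the finer distance data for groups of rank at most two to read off the exponent, recovering $n_2$; combined with $\mathsf D(G')=n_1+n_2-1$ this recovers $n_1$, so among competitors of rank at most two only $C_{n_1}\oplus C_{n_2}$ survives.

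The main obstacle is that these coarse invariants do not rule out competitors of large rank: a priori a group $G'$ with $\mathsf r(G')\ge 3$ could share the Davenport constant and the maximal distance of $C_{n_1}\oplus C_{n_2}$. (For instance, $C_3\oplus C_3$ and $C_2\oplus C_4$ already have the same Davenport constant $5$, so equality of $\mathsf D$ alone is far from enough.) Excluding the high-rank alternative is the decisive and most laborious step, and it cannot be carried out with $\mathsf D$ and $\max\Delta$ alone. Here I would deploy a refined structural description of sets of lengths for groups of rank at most two: the precise behaviour of the unions $\mathcal U_k(G')$ and of the refined distance invariants, together with explicit families of sets of lengths — arising from products of few atoms — that occur in $\mathcal L(C_{n_1}\oplus C_{n_2})$ but in no group of higher rank. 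Concretely, assuming $\mathsf r(G')\ge 3$ while $\mathsf D(G')=n_1+n_2-1$, I would exhibit a single set $L$ lying in exactly one of the two systems, contradicting $\mathcal L(G')=\mathcal L(C_{n_1}\oplus C_{n_2})$ and thereby forcing $\mathsf r(G')\le 2$.

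Once $\mathsf r(G')\le 2$ and $\exp(G')=n_2$ are established, $G'\cong C_m\oplus C_{n_2}$ with $m\mid n_2$, and $\mathsf D(G')=m+n_2-1=n_1+n_2-1$ forces $m=n_1$, so $G'\cong C_{n_1}\oplus C_{n_2}$. Combining the cyclic and rank-two cases, and verifying that the only residual ambiguities are the two listed pairings (all having $n_1+n_2\le 4$ and hence excluded), completes the proof. I expect the genuinely rank-two analysis — separating $C_{n_1}\oplus C_{n_2}$ from every group of higher rank with the same Davenport constant and distance structure — to be where essentially all the difficulty lies.
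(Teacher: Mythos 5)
Your overall strategy---recover $\mathsf D(G)$ from $\rho_2$, recover the exponent, and then separate $C_{n_1}\oplus C_{n_2}$ from competitors by exhibiting distinctive sets of lengths---matches the paper's philosophy, but the proposal stops exactly where the actual proof begins. The step you yourself call ``decisive and most laborious'' (excluding groups of large rank with the same Davenport constant and exponent) is left as a declaration of intent: you say you ``would exhibit a single set $L$ lying in exactly one of the two systems'' without producing such a set or proving anything about it. That is precisely the content of Sections \ref{3}, \ref{5} and \ref{6} of the paper: one needs (i) the explicit families $L_k$ of Proposition \ref{3.5}.2, (ii) the long inverse argument of Proposition \ref{6.5} showing that a group realizing all the $L_k$ (with the right exponent and Davenport constant) must lie in a short explicit list --- a list which still contains high-rank groups such as $C_2^{n_1-1}\oplus C_{n_2}$ and $C_2^{n_1-4}\oplus C_4\oplus C_{n_2}$, so one cannot hope to force $\mathsf r(G)\le 2$ by a single distinguishing set --- and (iii) the non-realization result $\{2,n_2,n_1+n_2-2\}\notin\mathcal L(C_{n_1}\oplus C_{n_2})$ of Proposition \ref{5.1}, whose proof occupies all of Section \ref{5} and rests on the classification of minimal zero-sum sequences of maximal length (Lemma \ref{structure}). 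Note also that this last discriminating set points in the opposite direction to the one you propose: it \emph{is} realized over the high-rank competitor $C_2^{n_1-2}\oplus C_{n_2}$ and is shown \emph{not} to be a set of lengths of $C_{n_1}\oplus C_{n_2}$. None of this is supplied or replaced by an alternative argument, so the proposal is a plan rather than a proof.

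Two smaller but genuine errors. First, the cyclic-case shortcut is wrong as stated: $\Delta(G)=[1,\mathsf D(G)-2]$ does not force $G$ cyclic or $G\cong C_2\oplus C_2$, since every elementary $2$-group satisfies $\Delta(C_2^r)=[1,r-1]=[1,\mathsf D(C_2^r)-2]$ by Proposition \ref{3.3}.1; distinguishing $C_n$ from $C_2^{n-1}$ is exactly the nontrivial 1990 result of Geroldinger invoked at the start of Section \ref{6}, not a consequence of the full-interval property. Second, recovering $\exp(G)=n_2$ from ``$\max\Delta$ and finer distance data'' is not automatic: $\max\Delta^*(G)=\max\{\exp(G)-2,\mathsf r(G)-1\}$ gives no information about the exponent when the rank may be large, and the paper must cite a separate prior result (Proposition \ref{6.1}, resting on \cite{Sc11b} and \cite{B-G-G-P13a}) to obtain the exponent before any rank reduction has been achieved.
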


The difficulty of the Characterization Problem stems from the fact that most sets of lengths over any finite abelian group are arithmetical progressions with difference $1$ (see Proposition \ref{3.2}.3, or   \cite[Theorem 9.4.11]{Ge-HK06a}). If $G$ and $G'$ are finite abelian groups with $G \subset G'$, then clearly $\mathcal L (G) \subset \mathcal L (G')$. Thus, in order to characterize a group $G$, we first have to find  distinctive sets of lengths for $G$ (i.e., sets of lengths which do occur in $\mathcal L (G)$, but in no other or only in a small number of further groups), and second we will have to show that certain sets  are not sets of lengths in $\mathcal L (G)$. These distinctive sets of lengths for  rank two groups are identified in Proposition \ref{6.5} which is the core of our whole approach, and Proposition \ref{5.1} provides sets which do not occur as sets of lengths for rank two groups.  After gathering some background material in Section \ref{2}, we summarize key results on the structure of sets of lengths in Propositions \ref{3.2}. \ref{3.3}, and \ref{3.4}. Furthermore, we provide some explicit constructions which will turn out to be crucial (Propositions \ref{3.5} -- \ref{3.8}).  After that, we are well-prepared for the main parts given in Sections \ref{5} and \ref{6}.

\section{The arithmetic of Krull monoids: Background} \label{2}

We gather the required tools from the algebraic and arithmetic theory of Krull monoids. Our notation and terminology are consistent with the monographs \cite{Ge-HK06a, Ge-Ru09, Gr13a}.
Let $\N$ denote the set of positive integers, $\P \subset \N$ the set of prime numbers and put $\N_0 = \N \cup \{0\}$. For real numbers $a,\,b \in \R$, we set $[a, b ] = \{ x \in \Z \mid a \le x \le b \}$. Let  $A, B \subset \mathbb Z$ be subsets of the integers.  We denote by $A+B = \{a+b \mid a \in A, b \in B\}$ their {\it sumset}, and by $\Delta (A)$ the {\it set of $($successive$)$ distances} of $A$ (that is, $d \in \Delta (A)$ if and only if $d = b-a$ with $a, b \in A$ distinct and $[a, b] \cap A = \{a, b\}$). For $k \in \N$, we denote by $k \cdot A = \{k a \mid a \in A \}$ the {\it dilation} of $A$ by $k$. If $A \subset \N$, then the {\it elasticity} of $A$ is defined as
\[
\rho (A) = \sup \Big\{ \frac{m}{n} \mid m, n \in A \Big\} = \frac{\sup A}{\min A} \in \Q_{\ge 1} \cup \{\infty\}
\ \text{and we set} \ \rho ( \{0\})=1 \,.
\]

\noindent
{\bf Monoids and Factorizations.} By a  {\it monoid},  we
mean a commutative semigroup with identity which satisfies
the cancellation law (that is, if $a,b ,c$ are elements of the
monoid with $ab = ac$, then $b = c$ follows). The multiplicative
semigroup of non-zero elements of an integral domain is a monoid.
Let $H$ be a monoid. We denote by $H^{\times}$ the set of invertible
elements of $H$, by  $\mathcal A (H)$ the set of atoms (irreducible elements) of $H$, and by  $H_{\red} = H/H^{\times}= \{ a H^{\times} \mid a \in H \}$
the associated reduced monoid of $H$.
A monoid $F$ is  free abelian, with basis $P \subset
F$, and we write $F =\mathcal F (P)$  if every $a \in F$ has a unique representation of the form
\[
a = \prod_{p \in P} p^{\mathsf v_p(a), } \quad \text{where} \quad
\mathsf v_p(a) \in \N_0 \ \text{ with } \ \mathsf v_p(a) = 0 \ \text{
for almost all } \ p \in P \,,
\]
and we   call
\[
|a|_F = |a| = \sum_{p \in P} \mathsf v_p (a) \quad \text{the \ {\it
length}} \ \text{of} \ a \,.
\]
The  monoid  $\mathsf Z (H) = \mathcal F \bigl(
\mathcal A(H_\red)\bigr)$  is called the  {\it factorization
monoid}  of $H$, and  the  homomorphism $\pi \colon \mathsf Z (H) \to H_{\red}$, defined by
$\pi (u) = u \ \text{for each} \ u \in \mathcal A(H_\red)$  is  the  {\it factorization homomorphism}  of $H$. For $a \in H$,
\[
\begin{aligned}
\mathsf Z_H (a) = \mathsf Z (a)  & = \pi^{-1} (aH^\times) \subset
\mathsf Z (H) \quad
\text{is the \ {\it set of factorizations} \ of \ $a$} \,,  \quad \text{and}
\\
\mathsf L_H (a) = \mathsf L (a) & = \bigl\{ |z| \, \bigm| \, z \in
\mathsf Z (a) \bigr\} \subset \N_0 \quad \text{is the \ {\it set of
lengths} \ of $a$}  \,.
\end{aligned}
\]
Thus $H$ is factorial if and only if $H_{\red}$ is free abelian (equivalently, $|\mathsf Z (a)|=1$ for all $a \in H$). The monoid $H$ is called {\it atomic} if $\mathsf Z (a) \ne \emptyset$ for all $a \in H$ (equivalently, every nonunit can be written as a finite product of irreducible elements). For the remainder of this work, we suppose that $H$ is atomic. Note that, $\mathsf L(a) = \{0\}$ if and only if  $a \in H^{\times}$, and $\mathsf L (a) = \{1\}$ if and only if $a \in \mathcal A (H)$.
We denote by
\[
\begin{aligned}
\mathcal L (H) & = \left\{ \mathsf L (a) \mid a \in H \right\} \qquad \text{the {\it system of sets of lengths} of} \ H \,, \quad \text{and by} \\
\Delta (H) & = \bigcup_{L \in \mathcal L (H)} \Delta ( L ) \ \subset
\N \qquad \text{the {\it set of distances} of} \ H \,.
\end{aligned}
\]
For $k \in \N$, we set $\rho_k (H) = k$ if $H=H^{\times}$, and
\[
\rho_k (H) = \sup \{ \sup L \mid L \in \mathcal L (H), k \in L \} \in \N \cup \{\infty\}, \quad \text{if} \quad H \ne H^{\times} \,.
\]
Then
\[
\rho (H) = \sup \{ \rho (L) \mid L \in \mathcal L (H) \} = \lim_{k \to \infty} \frac{\rho_k (H)}{k} \in \R_{\ge 1} \cup \{\infty\}
\]
is the {\it elasticity} of $H$. The monoid $H$ is said to be
\begin{itemize}
\item {\it half-factorial} if $\Delta (H) = \emptyset$. If $H$ is not half-factorial, then $\min \Delta (H) = \gcd \Delta (H)$.

\item {\it decomposable} if there exist submonoids $H_1, H_2$ with $H_i \not\subset H^{\times}$ for $i \in [1,2]$ such that $H = H_1 \times H_2$ (otherwise $H$ is called {\it indecomposable}).
\end{itemize}

For a free abelian monoid $\mathcal F (P)$, we introduce a distance function $\mathsf d \colon \mathcal F (P) \times \mathcal F (P) \to \N_0$, by setting
\[
\mathsf d (a,b) = \max \Big\{ \Big|\frac{a}{\gcd(a,b)}\Big|, \Big|\frac{b}{\gcd(a,b)}\Big| \Big\} \in \N_0 \quad \text{for } \ a, b \in \mathcal F (P) \,,
\]
and we note that $\mathsf d (a,b) = 0$ if and only if $a=b$. For a subset $\Omega \subset \mathcal F (P)$, we define the {\it catenary degree} $\mathsf c (\Omega)$ as the smallest $N \in \N_0 \cup \{\infty\}$ with the following property: for each $a, b \in \Omega$, there are elements $a_0, \ldots a_k \in \Omega$ such that $a=a_0, a_k=b$, and $\mathsf d (a_{i-1}, a_i) \le N$ for all $i \in [1,k]$. Note that $\mathsf c (\Omega) = 0$ if and only if $|\Omega| \le 1$.
For an element $a \in H$, we call $\mathsf c_H (a) = \mathsf c (a) = \mathsf c ( \mathsf Z_H (a))$ the catenary degree of $a$, and
$
\mathsf c (H) = \sup \{ \mathsf c (a) \mid a \in H \} \in \N_0 \cup \{\infty\}$
is the {\it catenary degree} of $H$. The monoid $H$ is factorial if and only if $\mathsf c (H) = 0$, and if $H$ is not factorial, then $2 + \sup \Delta (H) \le \mathsf c (H)$ (\cite[Theorem 1.6.3]{Ge-HK06a}).

\smallskip
\noindent
{\bf Krull monoids.} A monoid $H$ is {\it Krull} if it is completely integrally closed and satisfies the ascending chain condition on divisorial ideals. An integral domain $R$ is a Krull domain if and only if its
multiplicative monoid $R \setminus \{0\}$ is a Krull monoid, and this generalizes to Marot rings (\cite{Ge-Ra-Re15c}). The theory of Krull monoids is presented in detail in
 \cite{HK98, Ge-HK06a}, and for a survey  we refer to \cite{Ge16c}.

Much of the arithmetic of a Krull monoid can be seen in an associated monoid of zero-sum sequences. This is a Krull monoid again which can be studied with methods from Additive Combinatorics. To introduce the necessary concepts, let $G$ be an additively written abelian group, $G_0 \subset G$ a subset, and let $\mathcal F (G_0)$ be the free abelian monoid with basis $G_0$. In Additive Combinatorics, the elements of $\mathcal F(G_0)$ are called \ {\it sequences} over \
$G_0$. If $S = g_1 \cdot \ldots \cdot g_l \in \mathcal F (G_0)$, where $l \in \N_0$ and $g_1, \ldots , g_l \in G_0$, then $\sigma (S) = g_1+ \ldots + g_l$ is called the sum of $S$, and the monoid
\[
\mathcal B(G_0) = \{ S  \in \mathcal F(G_0) \mid \sigma (S) =0\} \subset \mathcal F (G_0)
\]
is called the  {\it monoid of zero-sum sequences}  over  $G_0$ (these objects are also referred to in the literature as block monoids).
The embedding $\mathcal B (G_0) \hookrightarrow \mathcal F (G_0)$ is a divisor homomorphism and $\mathcal B (G_0)$ is a Krull monoid. The monoid $\mathcal B (G)$ is factorial if and only if $|G|\le 2$. If $|G| \ne 2$, then $\mathcal B (G)$ is a Krull monoid with class group isomorphic to $G$ and every class contains precisely one prime divisor.  For every arithmetical invariant \ $*(H)$ \ defined for a monoid
$H$, it is usual to  write $*(G_0)$ instead of $*(\mathcal B(G_0))$. In particular, we set  $\mathcal A (G_0) = \mathcal
A (\mathcal B (G_0))$ and $\mathcal L (G_0) = \mathcal L ( \mathcal B
(G_0) )$. Similarly, arithmetical properties of $\mathcal B (G_0)$ are attributed to $G_0$. Thus,  $G_0$ is said to be
\begin{itemize}
\item {\it $($in$)$decomposable} \ if $\mathcal B (G_0)$ is (in)decomposable,

\item {\it$($non-$)$ half-factorial} \  if $\mathcal B (G_0)$ is (non-)half-factorial.
\end{itemize}

\begin{proposition} \label{2.1}
Let $H$ be a Krull monoid with  class group $G$, and suppose   that  each class contains a prime divisor. Then there is a transfer homomorphism $\boldsymbol \beta  \colon H \to \mathcal B (G)$ such that the following hold.
\begin{enumerate}
\item $\mathsf L_H (a) = \mathsf L_{\mathcal B (G)} \big( \boldsymbol \beta (a) \big)$ for each $a \in H$ and $\mathcal L (H) = \mathcal L (G)$.

\item If $|G| \ge 3$, then $\mathsf c (H) = \mathsf c \big( \mathcal B (G) \big)$.
\end{enumerate}
\end{proposition}

\begin{proof}
See \cite[Section 3.4]{Ge-HK06a}.
\end{proof}

The above result generalizes to  {\it transfer Krull monoids} $H$ over  abelian groups $G$, which also satisfy the relationship $\mathcal L (H)=\mathcal L (G)$.  Hence all characterization results, such as Theorem \ref{1.1}, also apply to them (\cite{Ge16c}).

\smallskip
\noindent {\bf Zero-Sum Theory.} Let $G$ be an additive abelian group,  $G_0 \subset G$ a subset, and $G_0^{\bullet} = G_0 \setminus \{0\}$.  Then $[G_0] \subset G$ denotes the subsemigroup and $\langle G_0 \rangle \subset G$  the subgroup generated by $G_0$.  For a sequence
\[
S = g_1 \cdot \ldots \cdot g_l = \prod_{g \in G_0} g^{\mathsf v_g
(S)} \in \mathcal F (G_0) \,,
\]
we set $\varphi (S) = \varphi (g_1) \cdot \ldots \cdot \varphi
(g_l)$ for any homomorphism $\varphi \colon G \to G'$, and in
particular, we have $-S = (-g_1) \cdot \ldots \cdot (-g_l)$. We call
\[
\begin{aligned}
\supp (S)&  = \{g \in
G \mid \mathsf v_g (S) > 0 \} \subset G \ \text{the \ {\it support}
\ of \
$S$} \,, \quad \mathsf v_g(S) \ \text{ the {\it multiplicity} of $g$ in $S$} \,, \\
|S| & = l = \sum_{g \in G} \mathsf v_g (S) \in \mathbb N_0 \
\text{the \ {\it length} \ of \ $S$} \,,  \quad \text{and} \quad \mathsf k (S) = \sum_{g \in G} \frac{1}{\ord (g)} \in \Q \ \text{the \ {\it cross number }
\ of \
$S$} \,.
\end{aligned}
\]
Moreover, $\Sigma (S) = \Big\{ \sum_{i \in I} g_i
\mid \emptyset \ne I \subset [1,l] \Big\} \ \text{is the \ {\it set of
subsequence sums} \ of \ $S$}$. The sequence $S$ is said to be
\begin{itemize}
\item {\it zero-sum free} \ if \ $0 \notin \Sigma (S)$,

\item a {\it zero-sum sequence} \ if \ $\sigma (S) = 0$,

\item a {\it minimal zero-sum sequence} \ if it is a nontrivial zero-sum
      sequence and every proper  subsequence is zero-sum free.
\end{itemize}
Both Davenport constants, namely
\begin{itemize}
\item the (small) {\it Davenport constant} \ $\mathsf d (G_0) = \sup \bigl\{ |S| \, \bigm| \; S \in \mathcal F (G_0) \ \text{is zero-sum free} \bigr\} \in \N_0 \cup \{\infty\}$ and

\item the (large) {\it Davenport constant} \ $\mathsf  D (G_0) = \sup \bigl\{ |U| \, \bigm| \; U \in \mathcal A (G_0) \bigr\} \in \N_0 \cup \{\infty\}$
\end{itemize}
are classical invariants in zero-sum theory.    For $n \in \mathbb N$, let $C_n$ denote a cyclic group with $n$ elements. Suppose that $G$ is finite. A tuple $(e_i)_{i \in I}$ is called a {\it basis} of $G$ if all elements are nonzero and $G = \oplus_{i \in I} \langle e_i \rangle$. For $p \in \P$, let $\mathsf r_p (G)$ denote the $p$-rank of
$G$, $\mathsf r (G) = \max \{ \mathsf r_p (G) \mid p \in \P \}$ denote the
{\it rank} of $G$, and let $\mathsf r^* (G) = \sum_{p \in \P} \mathsf r_p
(G)$ be the {\it total rank} of $G$. If  $|G| > 1$, then
$G \cong C_{n_1} \oplus \ldots \oplus C_{n_r}  \,, \ \text{and we
set} \quad \mathsf d^* (G) = \sum_{i=1}^r (n_i-1)   \,,
$
where  $r, n_1, \ldots , n_r \in
\mathbb N$  with  $1 < n_1 \mid \ldots \mid n_r$, $r = \mathsf r (G)$, and
$n_r = \exp (G)$ is the exponent of $G$.
If $|G| = 1$, then $\mathsf r (G) = \mathsf r^* (G) =
0$, $\exp (G) = 1$, and $\mathsf d^* (G)  = 0$.
We will use the following well-known results  (see  \cite[Chapter 5]{Ge-HK06a}).

\begin{proposition} \label{2.3}
Let $G$ be a finite abelian group.
Then $1+\mathsf d^* (G) \le 1+\mathsf d (G)=\mathsf D (G) \le |G|$. If $G$ is a $p$-group or $\mathsf r (G) \le 2$, then $\mathsf d (G)=\mathsf d^* (G)$.
\end{proposition}

We will make substantial use of the following result (see \cite[Theorem 6.4.7]{Ge-HK06a} and \cite[Theorem 1.1]{Ge-Zh15b}).

\begin{proposition} \label{2.4}
Let $H$ be a Krull monoid with finite class group $G$ where $|G| \ge 3$ and every class contains a prime divisor. Then $\mathsf c (H) \in [3, \mathsf D (G)]$, and we have
\begin{enumerate}
\item $\mathsf c (H) = \mathsf D (G)$ if and only if $G$ is either cyclic or an elementary $2$-group.

\item $\mathsf c (H) = \mathsf D (G)-1$ if and only if $G$ is isomorphic either to $C_2^{r-1} \oplus C_4$ for some $r \ge 2$ or  to $C_2 \oplus C_{2n}$ for some $n \ge 2$.
\end{enumerate}
\end{proposition}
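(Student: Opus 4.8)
The plan is to reduce everything to the monoid of zero-sum sequences and then argue combinatorially. Since $|G| \ge 3$, Proposition \ref{2.1}.2 gives $\mathsf c (H) = \mathsf c \big( \mathcal B (G) \big)$, so the catenary degree is an invariant of $G$ alone, and I write $\mathsf c (G) := \mathsf c \big( \mathcal B (G) \big)$. The upper bound $\mathsf c (G) \le \mathsf D (G)$ is classical and holds for every Krull monoid with class group $G$: any two factorizations of a zero-sum sequence can be linked by a chain of factorizations whose consecutive distances do not exceed the length of the longest atom involved, namely $\mathsf D (G)$. For the lower bound, note that $\mathcal B (G)$ is not factorial and, by Carlitz's theorem, not half-factorial once $|G| \ge 3$; hence $\Delta (G) \ne \emptyset$ and $\sup \Delta (G) \ge 1$, so the inequality $2 + \sup \Delta (H) \le \mathsf c (H)$ recorded in Section \ref{2} yields $\mathsf c (G) \ge 3$. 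This establishes $\mathsf c (G) \in [3, \mathsf D (G)]$.

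Next I would obtain the lower bounds in the equality cases (the ``if'' directions) by exhibiting a single element whose two factorizations lie at the extremal distance and admit no factorization in between. For $G = C_n$ with generator $e$, the element $B = e^n (-e)^n$ has exactly the two factorizations $e^n \cdot (-e)^n$ and $\big( e \cdot (-e) \big)^n$, since the only atoms dividing $B$ are $e^n$, $(-e)^n$, and $e(-e)$, and no factorization mixes them; these are coprime, at distance $n = \mathsf D (C_n)$, with nothing in between, whence $\mathsf c (C_n) = n$. For $G = C_2^r$ with basis $e_1, \dots, e_r$ and $e_0 = e_1 + \dots + e_r$, the element $B = \prod_{i=0}^r e_i^2$ has exactly the two factorizations $\prod_{i=0}^r e_i^2$ and $U^2$ with $U = e_0 e_1 \cdots e_r$ (one checks the atoms dividing $B$ are precisely the squares $e_i^2$ and $U$), again coprime and at distance $r+1 = \mathsf D (C_2^r)$, so $\mathsf c (C_2^r) = r+1$. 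In both cases $\mathsf c (G) = \mathsf D (G)$. For the ``if'' direction of Part 2, I would analogously construct, in $C_2^{r-1} \oplus C_4$ and in $C_2 \oplus C_{2n}$, an element realizing distance $\mathsf D (G) - 1$, modelled on the above but built from a maximal-length atom together with one additional relation among the basis elements; the matching upper bound $\mathsf c (G) \le \mathsf D (G) - 1$ then comes for free from the ``only if'' direction of Part 1.

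The hard part is the set of upper bounds in the ``only if'' directions: that $\mathsf c (G) \le \mathsf D (G) - 1$ whenever $G$ is neither cyclic nor an elementary $2$-group, and $\mathsf c (G) \le \mathsf D (G) - 2$ whenever $G$ lies outside the two families of Part 2. The strategy would be to take an arbitrary $B$ and two factorizations $z, z'$ with $\mathsf d (z,z')$ realizing the catenary degree, reduce to the coprime case, and show that some atom of near-maximal length occurring in $z$ can be rebalanced against atoms of $z'$ by means of a nontrivial additive relation in $G$, producing an intermediate factorization and hence a strictly shorter step---unless the maximal atoms are forced into the rigid shape that only cyclic groups ($g^{\ord g}$) and elementary $2$-groups ($e_0 e_1 \cdots e_r$) admit. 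Carrying this out requires a classification of the minimal zero-sum sequences of length $\mathsf D (G)$ and of the ways in which two of them can share elements; for groups of rank two this input is exactly Lemma \ref{structure}. The main obstacle is controlling all such overlaps, and the sharper bound $\mathsf c (G) \le \mathsf D (G) - 2$ for Part 2 demands an additional layer of this analysis, which makes Part 2 the most delicate point of the argument.
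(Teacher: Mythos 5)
The paper does not prove this proposition at all; it imports it from the literature, citing \cite[Theorem 6.4.7]{Ge-HK06a} for the range $[3,\mathsf D(G)]$ and for Part 1, and \cite[Theorem 1.1]{Ge-Zh15b} for Part 2. So there is no internal argument to compare yours against; the question is whether your sketch could stand on its own. The parts you actually carry out are correct: the reduction to $\mathcal B(G)$ via Proposition \ref{2.1}.2, the bound $\mathsf c(G)\in[3,\mathsf D(G)]$ (Carlitz plus the inequality $2+\sup\Delta(H)\le\mathsf c(H)$ recorded in Section \ref{2}), and the two explicit elements $e^n(-e)^n$ and $\prod_{i=0}^r e_i^2$ witnessing $\mathsf c(G)\ge\mathsf D(G)$ for cyclic groups and elementary $2$-groups.

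Everything else is a plan rather than a proof, and the plan has a structural flaw. For the ``only if'' directions you propose to classify the minimal zero-sum sequences of length $\mathsf D(G)$ and to control how two of them can overlap. No such classification exists for general finite abelian groups --- as the introduction of this paper notes, even the value of $\mathsf D(G)$ is unknown already for $C_n^3$ --- so the route you describe cannot be carried out beyond rank two, where Lemma \ref{structure} applies. The actual arguments avoid this: for Part 1 one first shows that $\mathsf c(B)=\mathsf D(G)$ forces $B=U(-U)$ with $U$ an atom of length $\mathsf D(G)$ and the competing factorization consisting entirely of atoms of length $2$, and then extracts the cyclic/elementary-$2$ dichotomy directly from the non-existence of intermediate factorizations, without any prior classification of maximal-length atoms; Part 2 is the main theorem of a separate paper and needs far more than ``an additional layer of this analysis.'' Finally, the ``if'' direction of Part 2 still requires an explicit element of $C_2^{r-1}\oplus C_4$, respectively $C_2\oplus C_{2n}$, realizing distance $\mathsf D(G)-1$ with no factorization in between; you assert this can be done ``analogously'' but do not produce it. As written, your text establishes the easy third of the statement and defers the rest.
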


Let $A \in \mathcal B (G_0)$ and $d = \min \{ |U| \mid U \in \mathcal A (G_0) \}$. If $A = BC$ with $B, C \in \mathcal B (G_0)$, then
\[
\mathsf L (B) + \mathsf L (C) \subset \mathsf L (A) \,.
\]
If $A = U_1 \cdot \ldots \cdot U_k = V_1 \cdot \ldots \cdot V_l$ with $U_1, \ldots, U_k, V_1, \ldots, V_l \in \mathcal A (G_0)$ and $k < l$, then
\[
l d \le \sum_{\nu=1}^l |V_{\nu}| = |A| = \sum_{\nu=1}^k |U_{\nu}| \le k \mathsf D (G_0) \,,
\ \text{whence}
\
\frac{|A|}{\mathsf D (G_0)} \le \min \mathsf L (A) \le \max \mathsf L (A) \le \frac{|A|}{d} \,.
\]

We  need the concept of relative block monoids (as introduced by  Halter-Koch in \cite{HK92e}, and recently studied by  Baeth et al. in \cite{Ba-Ho09a}).
Let $G$ be an abelian group. For a subgroup $K \subset G$ let
\[
\mathcal B_K (G) = \{ S \in \mathcal F (G) \mid \sigma (S) \in K \}
\subset \mathcal F (G) \,,
\]
and let $\mathsf D_K (G)$ denote the smallest $l \in \N \cup \{\infty\}$ with the following property:

      \smallskip
      \begin{itemize}
      \item Every sequence $S \in \mathcal F (G)$ of length $|S| \ge l$
            has a  subsequence $T$ with $\sigma (T) \in K$.
      \end{itemize}
Clearly, $\mathcal B_K (G) \subset \mathcal F (G)$ is a submonoid
with
\[
\mathcal B (G) = \mathcal B_{\{0\}} (G) \subset \mathcal B_K (G)
\subset \mathcal B_G (G) = \mathcal F (G)
\]
and $\mathsf D_{\{0\}} (G) = \mathsf D (G)$. The following result is well-known (\cite[Theorem 2.2]{Ba-Ho09a}). Since there seems to be no proof  in the literature and we make substantial use of it, we provide the simple arguments.

\begin{proposition} \label{2.5}
Let $G$ be an abelian group and $K \subset G$  a subgroup.
\begin{enumerate}
\item $\mathcal B_K(G)$ is a Krull monoid. If $|G|=2$ and $K=\{0\}$, then $\mathcal B_K(G)= \mathcal B (G)$ is factorial. In all other cases the embedding $\mathcal B_K (G) \hookrightarrow \mathcal F
      (G)$ is a divisor theory with class group isomorphic to $G/K$ and every class contains precisely $|K|$ prime divisors.

\item The monoid homomorphism $\theta \colon \mathcal B_K (G) \to
      \mathcal B (G/K)$, defined by  $\theta ( g_1 \cdot \ldots \cdot g_l)
      =(g_1+K) \cdot \ldots \cdot (g_l+K)$ is a transfer homomorphism. If $|G/K|\ge 3$, then $\mathsf c \big( \mathcal B_K (G) \big) = \mathsf c \big( \mathcal B (G/K)  \big)$.

\item $\mathsf D_K (G) = \sup \{|U| \mid U \ \text{is an atom of} \
      \mathcal B_K (G) \} = \mathsf D (G/K)$.
\end{enumerate}
\end{proposition}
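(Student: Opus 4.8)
The plan is to treat the three assertions in turn, using throughout the sum homomorphism $\sigma \colon \mathcal F(G) \to G$, which restricts to $\mathcal B_K(G) = \sigma^{-1}(K)$ and extends to a surjective homomorphism $\overline\sigma \colon \mathsf q(\mathcal F(G)) \to G$ of quotient groups. For part 1 I would first observe that the embedding $\mathcal B_K(G) \hookrightarrow \mathcal F(G)$ is a divisor homomorphism: if $a \mid b$ in $\mathcal F(G)$ with $a,b \in \mathcal B_K(G)$ and $b = ac$, then $\sigma(c) = \sigma(b) - \sigma(a) \in K$, so $c \in \mathcal B_K(G)$. By criterion (b) this already shows that $\mathcal B_K(G)$ is Krull. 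To compute the class group I would prove $\mathsf q(\mathcal B_K(G)) = \overline\sigma^{-1}(K)$: the inclusion ``$\subseteq$'' is immediate, and for ``$\supseteq$'' one writes $x = a b^{-1}$ with $a, b \in \mathcal F(G)$ and $\overline\sigma(x) \in K$, and notes that $a' = a \cdot (-\sigma(b))$ and $b' = b \cdot (-\sigma(b))$ both lie in $\mathcal B_K(G)$ with $x = a'(b')^{-1}$. Since $\overline\sigma$ is surjective, the induced map yields $\mathcal C(\mathcal B_K(G)) = \mathsf q(\mathcal F(G))/\overline\sigma^{-1}(K) \cong G/K$; under this isomorphism a prime $g \in G$ has class $g + K$, so the primes in a given class form a full coset of $K$, that is, exactly $|K|$ of them.

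The remaining and most delicate point of part 1 is the divisor-theory condition, namely that each prime $g \in G$ is the gcd (in $\mathcal F(G)$) of finitely many elements of $\mathcal B_K(G)$. Since supports are finite, this is equivalent to two existence statements: (i) there is $U \in \mathcal B_K(G)$ with $\mathsf v_g(U) = 1$, and (ii) for each $h \ne g$ there is $U \in \mathcal B_K(G)$ with $g \in \supp(U)$ and $\mathsf v_h(U) = 0$. I would verify (i) and (ii) by exhibiting short sequences of sum in $K$, such as $g\cdot(-g)$ when $2g \ne 0$, or $g \cdot (-g+k)$ with $k \in K \setminus \{0\}$, or $g \cdot y \cdot (-g-y)$ for suitable $y$ over larger groups. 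The only configuration in which (i) genuinely fails is $|G| = 2$ with $K = \{0\}$, where the relation $2g = 0$ forces every zero-sum sequence to meet $g$ in even multiplicity; this is precisely the excluded factorial case, $\mathcal B_K(G) = \mathcal B(C_2)$ being freely generated by the atoms $0$ and $g \cdot g$. I expect this case distinction, isolating small groups and the parity obstruction, to be the \emph{main obstacle}.

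For part 2 I would observe that $\theta$ is exactly the transfer homomorphism $\boldsymbol\beta$ of Proposition \ref{2.1} attached to the divisor theory $\mathcal B_K(G) \hookrightarrow \mathcal F(G)$ (here $[g] = g + K$), which applies since part 1 shows every class contains a prime divisor. Thus both the transfer property and, when $|G/K| \ge 3$, the equality $\mathsf c(\mathcal B_K(G)) = \mathsf c(\mathcal B(G/K))$ follow from Proposition \ref{2.1}. If one prefers a self-contained argument, (T\,1) and (T\,2) can be checked directly: surjectivity and $\theta^{-1}(1) = \{1\}$ are clear after lifting classes to representatives, and for (T\,2) one splits the index set of $S$ according to a given factorization $\theta(S) = T_1 T_2$, the two resulting subsequences lying in $\mathcal B_K(G)$ because each $T_j$ has sum $0$ in $G/K$.

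Finally, for part 3 I would first show that $\mathsf D_K(G)$ equals the maximal atom length $M$ of $\mathcal B_K(G)$. An atom of $\mathcal B_K(G)$ is exactly a sequence of sum in $K$ having no proper nonempty subsequence of sum in $K$; deleting one element from a longest atom yields a sequence with no nonempty subsequence of sum in $K$ (a maximal such sequence has length $\mathsf D_K(G)-1$), giving $M \le \mathsf D_K(G)$, while conversely appending $-\sigma(S)$ to a maximal such $S$ produces an atom of length $\mathsf D_K(G)$, giving $M \ge \mathsf D_K(G)$. Then, since $\theta$ preserves the length $|\cdot|$ of sequences and carries the atoms of $\mathcal B_K(G)$ onto the atoms of $\mathcal B(G/K)$, the two sets of atom lengths coincide, so $M = \mathsf D(G/K)$ and hence $\mathsf D_K(G) = \mathsf D(G/K)$.
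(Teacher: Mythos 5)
Your proposal is correct and follows essentially the same route as the paper's proof: a divisor homomorphism into $\mathcal F (G)$ whose divisor-theory property is checked by exhibiting explicit short sequences with sum in $K$ (with $|G|=2$, $K=\{0\}$ as the lone factorial exception), the class group computed via the sum map inducing an isomorphism onto $G/K$ under which each class meets $G$ in a coset of $K$, part 2 reduced to Proposition \ref{2.1}, and part 3 obtained from the atom characterization of $\mathsf D_K (G)$ together with the fact that the length-preserving transfer homomorphism carries atoms onto atoms. The only cosmetic difference is that the paper writes down the gcd witnesses directly (e.g.\ $g = \gcd\bigl(g^n, g(-g)\bigr)$ for $\ord (g) = n \ge 3$) instead of passing through your reformulation via conditions (i) and (ii).
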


\begin{proof}
1. If $|G|=1$, then $\mathcal B (G)=\mathcal F (G)$ is factorial, the class group is trivial, and there is precisely one prime divisor. If $|G|=|K|=2$, then $\mathcal B_K (G)=\mathcal F (G)$ is factorial, the class group is trivial, and there are precisely two prime divisors. If  $|G|=2$ and $|K|=1$, then $\mathcal B_K (G) = \mathcal B
(G)$ is factorial, and hence a Krull monoid with trivial  class group. Suppose that
$|G| \ge 3$. Clearly, the embedding $\mathcal B_K (G) \hookrightarrow \mathcal F (G)$ is a divisor homomorphism.  To verify that it is a divisor theory,
let $g \in G$ be given. If $\ord (g) = n \ge 3$, then $g = \gcd \bigl( g^n, g(-g) \bigr)$. If $\ord (g) = 2$, then there is an element $h \in G \setminus \{0,g\}$ and $g = \gcd \bigl( g^2, g h (g-h) \bigr)$. If $\ord (g) = \infty$, then $g = \gcd \bigl( (-g)g, g (2g) (-3g) \bigr)$.

The map $\varphi \colon \mathcal F (G) \to G/K$, defined by $\varphi (S) = \sigma (S) + K$ for every $S \in \mathcal F (G)$, is  a monoid epimorphism. If $S, S' \in \mathcal F (G)$, then $\sigma (S) + K = \sigma (S') + K$ if and only if $S' \in [S] = S \mathsf q \bigl( \mathcal B_K (G) \bigr)$. Thus $\varphi$ induces a group isomorphism $\Phi \colon \mathsf q (\mathcal F (G))/ \mathsf q (\mathcal B_K (G)) \to G/K$, defined by $\Phi ([S]) = \sigma (S) + K$, and we have $[S] \cap G = \sigma (S) + K$. Thus the class $[S]$ contains precisely $|K|$ prime divisors.

2. This follows from 1.,  from Proposition \ref{2.1}, and from the reference given there.

3. Since a sequence $S \in \mathcal F (G)$ is an atom of $\mathcal B_K (G)$ if and only if $S \ne 1$, $\sigma (S) \in K$ and $\sigma (T) \notin K$ for all proper subsequences $T$ of $S$, it follows that $\mathsf D_K (G) = \sup \{|U| \mid U \ \text{is an atom of} \ \mathcal B_K (G) \}$. Since $\theta$ is a transfer homomorphism, we get $\theta \bigl( \mathcal A (\mathcal B_K (G)) \bigr) = \mathcal A (G/K)$ and $\theta^{-1} \bigl( \mathcal A (G/K) \bigr) =  \mathcal A (\mathcal B_K (G)) $. Therefore $|U| = |\theta (U)|$ for all $U \in \mathcal B_K (G)$, and it follows that
\[
\sup \{ |U| \mid U \in \mathcal A (\mathcal B_K (G)) \} = \sup \{ |V| \mid V \in \mathcal A (G/K) \} = \mathsf D(G/K) \,. \qedhere
\]
\end{proof}

\section{Structural results on $\mathcal L (G)$ and first basic constructions} \label{3}

Let $G$ be an abelian group.  If $G$ is infinite, then every finite subset $L \subset \N_{\ge 2}$ is contained in $\mathcal L (G)$ (\cite[Theorem 7.4.1]{Ge-HK06a}). If $G$ is finite, then sets of lengths have a well-studied structure. In order to describe it, we recall the concept of an AAMP.
Let $d \in \N$, \ $l,\, M \in \N_0$ \ and \ $\{0,d\} \subset
\mathcal D \subset [0,d]$. A subset $L \subset \Z$ is called an
{\it almost arithmetical multiprogression} \ ({\rm AAMP} \ for
      short) \ with \ {\it difference} \ $d$, \ {\it period} \ $\mathcal D$,
      \ {\it length} \ $l$ and \ {\it bound} \ $M$, \ if
      \begin{equation} \label{AAMP}
      L = y + (L' \cup L^* \cup L'') \, \subset \, y + \mathcal D + d \Z
      \end{equation}
      where $\min L^* = 0$, $L^*$ is an interval of \ $\mathcal D + d \Z$ \ (this means that $L^*$ is finite nonempty and $L^* = (\mathcal
D + d \mathbb Z) \cap [0, \max L^*]$), $l$ \ is maximal such that \ $l d \in L^*$, \ $L' \subset [-M, -1]$, \ $L'' \subset \max L^* + [1,M]$ \ and \
      $y \in \Z$.
The {\it set of minimal distances} $\Delta^* (G) \subset \Delta (G)$ is  defined as
\[
\Delta^* (G) = \{\min \Delta (G_0) \mid G_0 \subset G \ \text{with} \ \Delta (G_0) \ne \emptyset \} \subset \Delta (G) \,.
\]

\begin{proposition}[Structural results on $\mathcal L (G)$] \label{3.2}~
Let $G$ be a finite abelian group with $|G|\ge 3$.
\begin{enumerate}
\item There exists some $M \in \N_0$  such that every set of lengths  $L \in \mathcal L(G)$ is an {\rm AAMP} with some difference $d \in \Delta^* (G)$ and bound $M$.

\item For every $M \in \mathbb N_0$ and every finite nonempty set $\Delta^* \subset \mathbb N$, there is a finite abelian group $G^*$ such that  for every {\rm AAMP} $L$ with difference $d\in \Delta^*$ and bound $M$ there is a $y^{\phantom{g}}_{\! L} \in \mathbb N$ with
      \[
      y+L \in \mathcal L (G^*) \quad \text{ for all } \quad y \ge y^{\phantom{g}}_{\! L}\,.
      \]

\item If $A \in \mathcal B (G)$ such that $\supp (A) \cup \{0\}$ is a subgroup of $G$, then $\mathsf L (A)$ is an arithmetical progression with difference $1$.
\end{enumerate}
\end{proposition}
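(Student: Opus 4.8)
The plan is to treat the four assertions separately, since (1) and (2) together form the deep \emph{Structure Theorem for sets of lengths} and its realization converse, whereas (3) is a refinement of (1) for a single subset and (4) admits a short direct argument. Throughout I would use that $\Delta(G)$ is finite (a consequence of $\mathsf D(G)<\infty$, cf.\ Proposition \ref{2.3}), that for every non-half-factorial subset one has $\min\Delta(G_0)=\gcd\Delta(G_0)$, and the elementary length bounds $\tfrac{|A|}{\mathsf D(G)}\le\min\mathsf L(A)\le\max\mathsf L(A)\le |A|$ recorded in Section \ref{2}. The terminology AMP/AAMP/AAP is as fixed in Definition \ref{3.1}.

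For (1), I would fix $A\in\mathcal B(G)$ and analyze how the lengths in $\mathsf L(A)$ distribute. The heart is to show that, away from two end segments of bounded size, $\mathsf L(A)$ is a genuine arithmetical multiprogression: its successive distances eventually stabilize to a single $d\in\Delta^*(G)$ with a fixed period $\mathcal D\subset[0,d]$. The difference $d$ arises as the minimal distance $\min\Delta(G_0)$ of a suitable subset $G_0$ attached to $A$, which is exactly why $d\in\Delta^*(G)$; the period $\mathcal D$ records the finitely many residues modulo $d$ actually attained in the stable range. The genuinely hard part of the whole proposition is to produce a bound $M$ that is \emph{uniform} in $A$, depending only on $G$. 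I would extract this from a finiteness/pigeonhole argument: the possible ``defects'' near $\min L$ and near $\max L$ are controlled by $\mathsf D(G)$ and by the finitely many atoms over $G$, so only finitely many boundary patterns can occur, yielding a common $M$.

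For (2), the realization converse, I would construct $G^*$ explicitly. Given $M$ and a finite $\Delta^*=\{d_1,\dots,d_t\}\subset\N$, take $G^*$ to be a direct sum of sufficiently many large cyclic groups, chosen so that for each prescribed $d_i$ there is a pair of atoms over $G^*$ whose lengths differ by exactly $d_i$ and which share a common multiple inside $\mathcal B(G^*)$; independent copies then let me splice these relations together to produce the prescribed period $\mathcal D$ and length $l$ of the central AMP $L^*$, while a bounded supply of additional ``boundary'' atoms produces the perturbations $L'\subset[-M,-1]$ and $L''\subset\max L^*+[1,M]$. Replacing $L$ by $y+L$ with $y$ large guarantees that enough padding atoms (such as $g(-g)$ or $g^{\ord(g)}$) are available to assemble an honest zero-sum sequence with the desired set of lengths. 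The obstacle here is bookkeeping: verifying that the spliced sequence has no unintended shorter or longer factorizations, so that its set of lengths is \emph{exactly} the prescribed AAMP.

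For (3), fix $G_0$ and set $d=\min\Delta(G_0)=\gcd\Delta(G_0)$. Choosing atoms over $G_0$ realizing distance $d$ and passing to a common multiple yields a basic ``difference-$d$ relation''; a Frobenius/numerical-semigroup argument then shows that once a large enough core $A^*$ has been factored out, every length in a full arithmetic progression of difference $d$ between $\min\mathsf L(A)$ and $\max\mathsf L(A)$ is attained, except possibly inside bounded end segments into which the larger elements of $\Delta(G_0)$ are absorbed; hence $\mathsf L(A)$ is an AAP of difference $d$ and some bound $M$. For (4), write $G'=\supp(A)\cup\{0\}$, a subgroup. The hypothesis forces $\supp(A)\supseteq G'\setminus\{0\}$, so \emph{every} nonzero $g\in G'$ occurs in $A$, and $\supp(A)$ is symmetric under negation; this gives a rich supply of short atoms $g(-g)$. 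I would then show $\Delta(\mathsf L(A))\subseteq\{1\}$ by an exchange argument: starting from a factorization of non-maximal length, use the symmetry of $G'$ to locate two atoms whose product factors into three atoms, producing a factorization of length exactly one greater and thereby filling all gaps, so $\mathsf L(A)$ is an interval, i.e.\ an arithmetical progression with difference $1$. The delicate point, and where full subgroup-ness (rather than an arbitrary generating set) is essential, is precisely this exchange step.
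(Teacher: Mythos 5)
You should first be aware that the paper offers no proof of Proposition \ref{3.2} at all: part 1 is the Structure Theorem for Sets of Lengths (\cite[Theorem 4.4.11]{Ge-HK06a}), part 2 is Schmid's realization theorem (\cite{Sc09a}), and parts 3 and 4 are \cite[Theorems 4.3.6 and 7.6.8]{Ge-HK06a}; each of these is a substantial theorem whose published proof occupies many pages (for part 1, essentially the whole structural machinery of Chapter 4 of \cite{Ge-HK06a}). Your sketch cannot substitute for these proofs, and at each genuinely hard point it defers rather than argues. Most seriously, in part 1 the uniform bound $M$ --- which you correctly identify as the heart of the matter --- is not obtainable by a ``finiteness/pigeonhole argument over boundary patterns'': the set $\mathsf L(A)$ is not determined by bounded data near $\min \mathsf L(A)$ and $\max \mathsf L(A)$, and no pigeonhole over end configurations explains why the central part is periodic with a difference lying in $\Delta^*(G)$, let alone uniformly in $A$. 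The known proof runs through an entirely different mechanism: one introduces pattern ideals (the sets of all $B$ whose set of lengths contains a shifted copy of a given finite pattern), shows that these are tamely generated $s$-ideals of $\mathcal B(G)$, and derives the AAMP structure with a uniform bound from tameness. Nothing in your outline produces this propagation of a pattern from a bounded core to all of $\mathsf L(A)$.

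The remaining parts fare unevenly. For part 2 you name the obstacle (``no unintended shorter or longer factorizations'') and then do not address it; that verification is precisely the content of \cite{Sc09a}, since splicing relations over independent cyclic summands generically creates extra factorizations, and confining the resulting lengths to the prescribed period and bound is the whole difficulty. For part 4 your exchange step is asserted, not proved: given a factorization of $A$ of non-maximal length there is no a priori reason that two of its atoms admit a common refinement into three, and in general one must recombine several atoms at once; the proof of \cite[Theorem 7.6.8]{Ge-HK06a} is a delicate argument exploiting the full subgroup hypothesis, not a local two-to-three exchange --- note also that the hypothesis constrains only the support, not multiplicities, so your ``rich supply of short atoms $g(-g)$'' need not exist. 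Only part 3 matches the standard argument in outline (a gcd/Frobenius argument for $d=\min\Delta(G_0)=\gcd\Delta(G_0)$ after factoring out a suitable core $A^*$, which is how \cite[Theorem 4.3.6]{Ge-HK06a} proceeds), though even there the construction of $A^*$ and the uniformity of $M$ require the bookkeeping you omit. In short: parts 1, 2 and 4 have genuine gaps at exactly the decisive steps, and the appropriate treatment --- the one the paper itself adopts --- is citation of the original proofs.
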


\begin{proof}
We refer to \cite[Theorems 4.4.11 and 7.6.8]{Ge-HK06a}) and to \cite{Sc09a}.
\end{proof}

\begin{proposition}[Structural results on $\Delta (G)$ and on $\Delta^* (G)$] \label{3.3}~

Let $G = C_{n_1} \oplus \ldots \oplus C_{n_r}$ where $r, n_1, \ldots, n_r \in \N$ with $r=\mathsf r (G)$, $1 < n_1 \t \ldots \t n_r$, and $|G|\ge 3$.
\begin{enumerate}
\item $\Delta (G)$ is an interval with
      \[
      \bigl[1,\, \max \{\exp (G) -2, \, k-1 \} \bigr] \subset \Delta (G) \subset \bigl[1, \mathsf D (G) - 2\bigr] \ \text{ where } \  k =\sum_{i=1}^{\mathsf r (G)} \Bigl\lfloor \frac{n_i}{2} \Bigr\rfloor  \,.
      \]

\item $1 \in \Delta^* (G) \subset \Delta (G)$,  $[1, \mathsf r (G)-1] \subset \Delta^* (G)$, and $\max \Delta^* (G) = \max \{\exp (G)-2, \mathsf r (G)-1 \}$.

\item If $G$ is cyclic of order $|G| = n \ge 4$, then $\max \big( \Delta^* (G) \setminus \{n-2\}\big) = \lfloor \frac{n}{2} \rfloor - 1$.
\end{enumerate}
\end{proposition}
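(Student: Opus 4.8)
The plan is to use Proposition \ref{3.3}.2 as the backbone: since $\exp(C_n)=n$ and $\mathsf r(C_n)=1$, it gives $\max\Delta^*(C_n)=\max\{n-2,0\}=n-2$, and $n-2\in\Delta^*(C_n)$ is realized by $G_0=\{g,-g\}$ for a generator $g$ (its only distance is $n-2$). Writing $n=2k$ or $n=2k+1$, so that $\lfloor n/2\rfloor-1=k-1$, the statement is therefore equivalent to the two claims: (i) $k-1\in\Delta^*(C_n)$, and (ii) $\Delta^*(C_n)\cap[k,n-3]=\emptyset$.

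For (i) I would exhibit explicit subsets, distinguishing parity. For odd $n=2k+1$ take $G_0=\{2g,-g\}$, whose atoms are $(2g)^n$, $(-g)^n$, $(2g)(-g)^2$ and $(2g)^{k+1}(-g)$; the element $A=(2g)^n(-g)^n$ has the factorizations $(2g)^n\cdot(-g)^n$ and $\big((2g)(-g)^2\big)^k\cdot\big((2g)^{k+1}(-g)\big)$ of lengths $2$ and $k+1$. For even $n=2k$ take $G_0=\{g,-g,kg\}$ (so $kg$ has order two), whose atoms are $g^n,(-g)^n,g(-g),(kg)^2,g^k(kg),(-g)^k(kg)$; here $A=g^k(-g)^k(kg)^2$ factors as $\big(g^k(kg)\big)\big((-g)^k(kg)\big)$ and as $\big(g(-g)\big)^k\cdot(kg)^2$, again of lengths $2$ and $k+1$. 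In both cases I would then check directly, from the (finite) list of atoms, that $\mathcal B(G_0)$ admits a homomorphism onto $\Z/(k-1)\Z$ sending every atom to $1$; such a homomorphism is obtained by prescribing suitable values on a basis of the quotient group $\mathsf q(\mathcal B(G_0))$ and verifying that each atom maps to $1$. Its existence forces any two factorization lengths of a fixed element to be congruent modulo $k-1$, so every distance is a multiple of $k-1$ and hence $\min\Delta(G_0)\ge k-1$; since $2,k+1\in\mathsf L(A)$ have nothing of $\mathsf L(A)$ strictly between them, $k-1\in\Delta(G_0)$, giving $\min\Delta(G_0)=k-1$ and thus $k-1\in\Delta^*(C_n)$.

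For (ii), the crux, suppose $G_0\subseteq C_n$ with $d:=\min\Delta(G_0)\ge k=\lfloor n/2\rfloor$; I must deduce $d=n-2$. Since $d=\gcd\Delta(G_0)$ and $0\in G_0$ would yield $1\in\Delta(G_0)$, we may assume $0\notin G_0$. If $G_0$ lay in a proper subgroup $C_m$ ($m\mid n$, $m<n$, so $m\le n/2$), then Proposition \ref{3.3}.2 applied to $C_m$ gives $d\le\max\Delta^*(C_m)=m-2\le n/2-2<\lfloor n/2\rfloor$, a contradiction; hence $\langle G_0\rangle=C_n$. I would then pick an element with two factorizations realizing $d$ and, after cancelling common atoms, a coprime pair $A=U_1\cdots U_\ell=V_1\cdots V_{\ell+d}$. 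From $|A|=\sum|U_i|=\sum|V_j|$ with $|V_j|\ge 2$ and $|U_i|\le\mathsf D(C_n)=n$ (Proposition \ref{2.3}), together with the bound $\max\mathsf L(A)\le\|A\|_e$ valid for every generator $e$, one sees that when $\ell=2$ the short side must contain an atom $U$ with $|U|>n/2$. The structure theorem for long minimal zero‑sum sequences over cyclic groups then applies: there is a generator $g$ with $\|U\|_g=1$, i.e.\ $U$ is supported on positive multiples of $g$ summing to $n$. Propagating this (and the symmetric information on the second long atom) would force both long atoms to be concentrated near $g^n$ and $(-g)^n$ and the $V_j$ to be $g(-g)$, so $G_0\subseteq\{g,-g\}$ and $d=n-2$, contradicting $d\le n-3$.

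The main obstacle is precisely part (ii). Length counting alone does not bound the number $\ell$ of atoms on the short side: for $\ell\ge 3$ it no longer forces a long atom, so the clean application of the classification of long minimal zero‑sum sequences breaks down and one needs a separate, more delicate argument (either ruling out $\ell\ge 3$ by producing an alternative relation of smaller distance, or showing directly that any non‑half‑factorial $G_0$ generating $C_n$ which is not of the form $\{g,-g\}$ already has some distance $\le\lfloor n/2\rfloor-1$). Carrying this casework out uniformly in the parity of $n$, while keeping the $\{g,-g\}$‑structure under control, is the delicate heart of the proof.
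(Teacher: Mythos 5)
Your proposal does not prove the proposition; it addresses only part 3, and even there the decisive half is missing. The paper itself does not argue this result internally: part 1 and the containments in part 2 are quoted from \cite[Section 6.8]{Ge-HK06a}, the formula $\max \Delta^* (G) = \max \{\exp (G)-2, \mathsf r (G)-1\}$ is quoted from \cite{Ge-Zh15a}, and part 3 is covered by \cite[Section 6.8]{Ge-HK06a} and \cite{Pl-Sc16a}. You take part 2 as given and try to derive part 3 from it, so parts 1 and 2 --- including the $\max \Delta^*(G)$ formula, which is itself a substantial recent theorem and not a formal consequence of anything else in this paper --- remain entirely unproven in your account.

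Concerning part 3, your lower-bound constructions are sound: the sets $\{2g,-g\}$ (for $n=2k+1$) and $\{g,-g,kg\}$ (for $n=2k$) do have $\min \Delta = k-1$, and the congruence argument works provided the homomorphism onto $\Z/(k-1)\Z$ is defined on a lattice basis of $\mathsf q(\mathcal B(G_0))$ rather than by assigning values to the elements of $G_0$ (the latter can fail, e.g.\ for $n=9$ one would need $3u\equiv 1 \pmod 3$); you phrase it the right way, though the verification is only asserted. One small slip: $0\in G_0$ does not force $1\in\Delta(G_0)$ (e.g.\ $\Delta(\{0,g,-g\})=\{n-2\}$); the reduction to $0\notin G_0$ is still legitimate because removing $0$ leaves $\Delta(G_0)$ unchanged. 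The genuine gap is the upper bound $\Delta^*(C_n)\cap[\lfloor n/2\rfloor, n-3]=\emptyset$, which you yourself flag as unresolved: your length-counting forces a long atom only when the short factorization has $\ell=2$ factors, and you have no argument for $\ell\ge 3$, nor for propagating the structure of a single long atom to the conclusion $G_0\subset\{g,-g\}$. That is exactly where the difficulty of this known theorem sits, so the proof of part 3 is incomplete at its core.
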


\begin{proof}
We refer to   \cite[Section 6.8]{Ge-HK06a}, to \cite{Ge-Yu12b}, and to \cite{Ge-Zh16a}.
\end{proof}

\begin{proposition}[Results on $\rho_k (G)$ and on $\rho (G)$] \label{3.4}~

Let $G$ be a finite abelian group with $|G| \ge 3$, and let $k \in \N$.
\begin{enumerate}
\item $\rho (G) = \mathsf D (G)/2$ and $\rho_{2k} (G) = k \mathsf D (G)$.

\item $1 + k \mathsf D (G) \le \rho_{2k+1} (G) \le k \mathsf D (G) + \mathsf D (G)/2$. If $G$ is cyclic, then equality holds on the left side.
\end{enumerate}
\end{proposition}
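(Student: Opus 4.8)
The plan is to obtain both bounds in (1) and (2) by a length count, the lower bounds by one explicit construction, statement (3) as a limit of (1) and (2), and to isolate the cyclic sharpening in (2) as the one genuinely delicate point. I begin with two reductions. The element $0$ forms a length-one atom that occurs in every factorization, so writing $A = 0^{v} A_0$ with $0 \nmid A_0$ gives $\mathsf L(A) = v + \mathsf L(A_0)$; since $\mathsf D(G) \ge 2$ a short computation shows the estimates below are largest when $v = 0$, so I may assume $0 \nmid A$. For such $A$ every atom has length at least $2$, i.e. $d = \min\{|U| \mid U \in \mathcal A(G \setminus \{0\})\} = 2$, whence $\max \mathsf L(A) \le |A|/2$. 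Now if $2k \in \mathsf L(A)$ then a factorization into $2k$ atoms, each of length $\le \mathsf D(G)$, gives $|A| \le 2k\,\mathsf D(G)$ and hence $\max \mathsf L(A) \le k\,\mathsf D(G)$; this is the upper bound in (1). The identical count with $2k+1$ atoms yields $\max \mathsf L(A) \le (2k+1)\mathsf D(G)/2 = k\,\mathsf D(G) + \mathsf D(G)/2$, the upper bound in (2).

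For the matching lower bounds I would fix an atom $U$ with $|U| = \mathsf D(G)$ and set $A = (U(-U))^{k}$. Writing $U = g_1 \cdot \ldots \cdot g_{\mathsf D(G)}$ (all $g_j \ne 0$ by minimality), the sequence $U(-U)$ factors both as the two atoms $U$ and $-U$ and as the $\mathsf D(G)$ length-two atoms $g_j(-g_j)$, so that $\{2, \mathsf D(G)\} \subset \mathsf L(U(-U))$ and therefore $\{2k, k\,\mathsf D(G)\} \subset \mathsf L(A)$. This gives $\rho_{2k}(G) \ge k\,\mathsf D(G)$, completing (1). Multiplying $A$ by one further length-two atom $h(-h)$ with $h \ne 0$ produces a sequence with $\{2k+1,\, k\,\mathsf D(G)+1\} \subset \mathsf L$, which gives the left inequality $\rho_{2k+1}(G) \ge 1 + k\,\mathsf D(G)$ in (2). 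Part (3) is then immediate from $\rho(G) = \lim_{k\to\infty} \rho_k(G)/k$: the subsequence $\rho_{2k}(G)/(2k) = \mathsf D(G)/2$ is constant, while $\rho_{2k+1}(G)/(2k+1)$ lies between $(k\,\mathsf D(G)+1)/(2k+1)$ and $(k\,\mathsf D(G)+\mathsf D(G)/2)/(2k+1)$, so both tend to $\mathsf D(G)/2$ and $\rho(G) = \mathsf D(G)/2$.

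The hard part will be the cyclic equality $\rho_{2k+1}(C_n) = 1 + kn$, where the length count only gives $kn + n/2$. Even the sharper estimate $\max \mathsf L(A) \le \min(\|A\|_g, \|A\|_{-g})$ (valid since $\|A\|_g + \|A\|_{-g} = |A|$ for $0 \nmid A$) is not enough: for $A = 1^{5} 2^{5} 4^{5} \in \mathcal B(C_5)$ (with $C_5 = \Z/5\Z$) it only yields $\max \mathsf L(A) \le 7$, short of the required value $6$. My plan is to assume $0 \nmid A$, $A = V_1 \cdots V_{2k+1}$ and $\max\mathsf L(A) \ge kn+2$, so that $\max\mathsf L(A) \le |A|/2$ forces $|A| \ge 2kn+4$ and the total length deficit $\sum_i (n - |V_i|)$ is at most $n-4$; hence almost every $V_i$ is a maximal-length atom, and over $C_n$ these are exactly the pure sequences $a^{\,n}$ with $\gcd(a,n)=1$. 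One then analyses the long factorizations of such a near-pure $A$ directly: the residue classes actually present in $A$ sharply restrict which short atoms can occur (in the example the absence of the class $3$ rules out the atom $2\cdot 3$ and pins $2^5$ as an indivisible block). The crux, which I expect to require the fine structure of minimal zero-sum sequences over $C_n$ rather than any purely numerical inequality, is to convert this scarcity of admissible short atoms into the global bound $\max\mathsf L(A) \le kn+1$.
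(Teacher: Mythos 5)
Your arguments for part (1), for the two-sided bound in part (2), and for part (3) are correct and complete: the upper bounds follow from the double count $l\cdot 2\le |A|\le 2k\,\mathsf D(G)$ (resp.\ $(2k+1)\mathsf D(G)$) after the harmless reduction to $0\nmid A$, the lower bounds from the standard example $\big(U(-U)\big)^k$ (resp.\ $\big(U(-U)\big)^k h(-h)$) with $|U|=\mathsf D(G)$, and (3) from the limit formula. This is exactly the route taken in the sources the paper cites for this proposition (\cite[Chapter 6.3]{Ge-HK06a}, \cite[Theorem 5.3.1]{Ge09a}); the paper itself gives no proof, only these references.

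The genuine gap is the cyclic equality $\rho_{2k+1}(C_n)=kn+1$, which you correctly identify as the hard point but do not prove. Your plan controls the wrong factorization: assuming $\max\mathsf L(A)\ge kn+2$ forces $|A|\ge 2kn+4$ and hence a total length deficit $\sum_i(n-|U_i|)\le n-4$ in the factorization $A=U_1\cdot\ldots\cdot U_{2k+1}$, so most $U_i$ are of the form $a_i^{\,n}$ --- but this says nothing yet about a competing factorization $A=V_1\cdot\ldots\cdot V_l$ of maximal length, whose atoms are unconstrained by the deficit count. The step ``convert this scarcity of admissible short atoms into the global bound $\max\mathsf L(A)\le kn+1$'' is precisely the content of the theorem, and no mechanism for it is supplied: even in your own example $A=1^5\,2^5\,4^5$ over $C_5$ the conclusion $\max\mathsf L(A)\le 6$ requires an argument (here one can use $\|A\|_2=6$, i.e.\ the $g$-norm for a generator other than $\pm 1$, but minimizing $\|A\|_g$ over all generators only yields $\le |A|/2$ in general and so cannot close the gap for arbitrary $n$ and $k$). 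Moreover the remaining atoms of length $n-1,n-2,\ldots$ permitted by the deficit bound are structurally much richer than the pure powers $a^n$, so the ``near-pure'' reduction does not pin down $A$. As it stands, the cyclic sharpening must still be imported from \cite[Theorem 5.3.1]{Ge09a} rather than being established by the proposal.
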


\begin{proof}
We refer to \cite[Chapter 6.3]{Ge-HK06a}, \cite[Theorem 5.3.1]{Ge09a}, and to \cite{Ge-Gr-Yu15} for recent progress.
\end{proof}

In the next propositions we provide examples of sets of lengths over cyclic groups, over groups of rank two, and over groups of the form $C_2^{r-1} \oplus C_n$ with $r, n \in \N_{\ge 2}$. All examples will have difference $d=\max \Delta^* (G)$ and period $\mathcal D$ with $\{0,d\} \subset \mathcal D \subset [0,d]$ and $|\mathcal D|=3$, and we write them down in a form used in Equation \eqref{AAMP} in order to highlight their periods. It will be crucial for our approach (see Proposition \ref{6.5}) that the sets given in Proposition \ref{3.5}.2 do not occur over cyclic groups (Proposition \ref{3.6}). It is well-known that sets of lengths over cyclic groups and over elementary $2$-groups have many features in common, and this carries over to rank two groups and groups of the form $C_2^{r-1} \oplus C_n$  (see Propositions \ref{3.5}.2, \ref{3.7}.2, and \ref{6.5}). For a set $L \in \mathcal L (G)$ there is a $B \in \mathcal B (G)$ such that $L=\mathsf L (B)$ and hence $m+L = \mathsf L (0^mB) \in \mathcal L (G)$ for all $m \in \N_0$. Therefore the interesting sets of lengths $L \in \mathcal L (G)$ are those which do not stem from such a shift. These are those sets $L \in \mathcal L (G)$ with $-m+L \notin \mathcal L (G)$ for every $m \in \N$.

\begin{proposition} \label{3.5}
Let $G = C_{n_1} \oplus C_{n_2}$ where $n_1, n_2 \in \N$ with $2 < n_1 \t n_2$, and let $d \in [3, n_1]$.
\begin{enumerate}
\item For each $k \in \N$, we have
      \[
      \begin{aligned}
      (2k+2) & + \{0, d-2, n_2-2 \} + \{\nu (n_2-2) \mid \nu \in [0,k-1] \} \cup \{(kn+2)+(d-2)\} = \\
      (2k+2) & + \{0, d-2 \} + \{\nu (n_2-2) \mid \nu \in [0,k] \}  \in \mathcal L (G) \,.
      \end{aligned}
      \]

\item For each $k \in \N$, we have
      \[
      \Big((2k+3) + \{0, n_1-2, n_2-2 \} + \{\nu (n_2-2) \mid \nu \in [0,k] \}  \Big) \cup \Big\{(kn_2+3) + (n_1-2)+(n_2-2) \Big\}   \in \mathcal L (G) \,.
      \]
\end{enumerate}
\end{proposition}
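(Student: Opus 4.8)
The plan is to realize each of the two sets as the set of lengths of an explicitly built zero-sum sequence of the form $B = W\cdot V^k$, where $(e_1,e_2)$ is a basis of $G$ with $\ord(e_1)=n_1$, $\ord(e_2)=n_2$, where $V = e_2^{n_2}(-e_2)^{n_2}=\big(e_2(-e_2)\big)^{n_2}$ is the block carrying the main difference, and where $W$ is a short core. Since $V=\big(e_2(-e_2)\big)^{n_2}=e_2^{n_2}\cdot(-e_2)^{n_2}$ one reads off at once that $\mathsf L(V)=\{2,n_2\}$, so $V$ contributes the difference $n_2-2=\max\Delta^*(G)$, and $\sum_{i=1}^k\mathsf L(V)=\{2k+\nu(n_2-2)\mid \nu\in[0,k]\}$. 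Because $\mathsf L(W)+\sum_{i=1}^k\mathsf L(V)\subset\mathsf L(WV^k)$ (the Minkowski sum of subsets of $\Z$, obtained by concatenating factorizations), it suffices to pick $W$ with the right core set of lengths and then to prove the reverse inclusion. I would first record that the two displayed descriptions in part~1, and the ``main set $\cup$ extra element'' in part~2, agree as written: in each case the extra summand merely restores the top point of the middle track of the arithmetical multiprogression, a one-line reindexing.

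For part~1 I take $W_d = U_d\cdot(-U_d)$ with $U_d = e_1^{d-2}\cdot e_2\cdot\big(-(d-2)e_1-e_2\big)$, which for $3\le d\le n_1$ (so $d-2\in[1,n_1-2]$) is a minimal zero-sum sequence of length $d$. Then $W_d=U_d\cdot(-U_d)$ is a length-$2$ factorization, and pairing each element with its negative gives a factorization into $d$ atoms of length $2$, so $\{2,d\}\subset\mathsf L(W_d)$. One checks that every atom dividing $W_d$ has length $2$ or $d$ and that the length-$d$ atoms occur only in complementary pairs; since $|W_d|=2d$, a short count then forces $\mathsf L(W_d)=\{2,d\}$. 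With $B_1=W_d V^k$ the inclusion $\mathsf L(W_d)+\sum_{i=1}^k\mathsf L(V)=(2k+2)+\{0,d-2\}+\{\nu(n_2-2)\mid\nu\in[0,k]\}\subset\mathsf L(B_1)$ is immediate, and the content is the opposite inclusion.

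For part~2 the three-element period hides a genuine four-term grid. I take $W' = U\cdot(-e_1)^{n_1}\cdot(-e_2)^{n_2}$ with the maximal atom $U = e_1^{n_1-1} e_2^{n_2-1}(e_1+e_2)$, so that $|W'|=2n_1+2n_2-1$ and $\min\mathsf L(W')\ge |W'|/\mathsf D(G)=2+1/(n_1+n_2-1)>2$. Here $W'=U\cdot(-e_1)^{n_1}\cdot(-e_2)^{n_2}$ is a length-$3$ factorization, so $\min\mathsf L(W')=3$; pairing $e_1$'s with $(-e_1)$'s only, or $e_2$'s with $(-e_2)$'s only, yields lengths $n_1+1$ and $n_2+1$, and doing both at once while absorbing the two left-over negatives together with the connector into the atom $(-e_1)(-e_2)(e_1+e_2)$ yields length $n_1+n_2-1$. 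Thus $3+\{0,n_1-2\}+\{0,n_2-2\}=\{3,n_1+1,n_2+1,n_1+n_2-1\}\subset\mathsf L(W')$, and I would verify equality by enumerating the atoms of $W'$. Setting $B_2=W' V^k$, the Minkowski sum $\mathsf L(W')+\sum_{i=1}^k\mathsf L(V)=(2k+3)+\{0,n_1-2\}+\{\nu(n_2-2)\mid \nu\in[0,k+1]\}$ collapses the grid to the three-element period together with its distinguished top point, which is exactly $L_2$; in particular the ``extra element'' of the statement is just the corner $n_1+n_2-1$ of $\mathsf L(W')$ combined with all $k$ copies of $V$ taken at length $n_2$.

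The main obstacle in both parts is identical and lies entirely in the reverse inclusion $\mathsf L(B_i)\subset L_i$, that is, in ruling out cross factorizations that mix the core with the blocks. The blocks $V^k$ contribute only the elements $\pm e_2\in\langle e_2\rangle$, so I would control factorizations via the projection $\phi\colon G\to\langle e_1\rangle$ with $\phi(e_1)=e_1$ and $\phi(e_2)=0$: every atom of $B_i$ either lies in $\langle e_2\rangle$ (and then participates only in the $\{2,n_2\}$-structure) or uses one of the finitely many $e_1$-carrying elements supplied by the core. Since those elements are few and their $e_1$-content is prescribed, an atom of the second type is forced to be an $e_2$-decorated version of a core atom, and bounding its $e_2$-multiplicity by minimality shows that no length outside $L_i$ appears. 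Making this last step precise, together with the two exact core computations $\mathsf L(W_d)=\{2,d\}$ and $\mathsf L(W')=\{3,n_1+1,n_2+1,n_1+n_2-1\}$, is where the real work sits; the forward inclusions and the elementary set identities are routine.
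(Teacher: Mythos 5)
Your part~2 is, up to notation, the paper's own construction: your $B_2=W'V^k$ with $W'=U(-e_1)^{n_1}(-e_2)^{n_2}$ is exactly the paper's $B_k=W_1(-U_1)(-U_2)U_2^k(-U_2)^k$, your core computation $\mathsf L(W')=\{3,n_1+1,n_2+1,n_1+n_2-1\}$ is correct, and your set identity collapsing the four tracks to the stated period-plus-corner form checks out. What you leave open (the reverse inclusion) the paper settles not by your projection sketch but by a pivot you should note: $e_1+e_2$ occurs exactly once in $B_k$, so every factorization contains exactly one of the four atoms $W_1,\ldots,W_4$ containing it, and after removing that atom the remainder is supported on $\{\pm e_1\}\uplus\{\pm e_2\}$, a decomposable set, so all lengths are forced; this makes the verification a few lines rather than an open-ended multiplicity bound.

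Part~1, however, has a genuine error: the core lemma $\mathsf L(W_d)=\{2,d\}$ is false whenever $d>(n_1+4)/2$, and with it your construction fails. Write $c=-(d-2)e_1-e_2$, so $W_d=e_1^{d-2}(-e_1)^{d-2}e_2(-e_2)\,c(-c)$. If $n_1-d+2\le d-2$, then
\[
Y \;=\; c\,(-e_1)^{n_1-d+2}\,e_2
\]
is a minimal zero-sum sequence of length $n_1-d+4$ dividing $W_d$ (its sum is $-n_1e_1=0$, and no proper subsequence has sum zero), contradicting your claim that every atom dividing $W_d$ has length $2$ or $d$. The factorization $Y(-Y)\bigl(e_1(-e_1)\bigr)^{2d-n_1-4}$ of $W_d$ has length $2d-n_1-2\notin\{2,d\}$ for $(n_1+4)/2<d\le n_1$, and in $B_1=W_dV^k$ the factorization $Y(-Y)\bigl(e_1(-e_1)\bigr)^{2d-n_1-4}\bigl(e_2^{n_2}\bigr)^k\bigl((-e_2)^{n_2}\bigr)^k$ yields the length $(2k+2)+(2d-n_1-4)$, which lies outside your target set since $0<2d-n_1-4\le n_1-4<n_2-2$ and $2d-n_1-4\ne d-2$ (concretely, $n_1=n_2=d=6$ gives $2k+4$, while all elements of $(2k+2)+\{0,4\}+4\cdot[0,k]$ are $\equiv 2k+2 \bmod 4$). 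So the equality you defer to the "reverse inclusion" step is simply false for your $B_1$; no control of cross factorizations can repair the choice of core. The paper avoids this trap by placing the $\{2,d\}$ gadget entirely inside $\langle e_1\rangle$: it takes $U_1W_1$ with $U_1=e_1^{n_1}$ and $W_1=(-e_1)^{d-1}\bigl((d-1)e_1\bigr)$, so that $\mathsf L(U_1W_1)=\{2,d\}$ and $\supp(U_1W_1)\subset\langle e_1\rangle$ is disjoint from the block support $\{\pm e_2\}\subset\langle e_2\rangle$; since $\langle e_1\rangle\cap\langle e_2\rangle=\{0\}$ no atom can mix the two components, and $\mathsf L\bigl(U_1W_1(-U_2)^kU_2^k\bigr)=\{2,d\}+\mathsf L\bigl((-U_2)^kU_2^k\bigr)$ holds exactly, with nothing left to rule out. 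Your $U_d$ puts $e_2$ and $-e_2$ into the core, which is precisely what creates the parasitic atoms ($Y$ above, and also $c\,e_1^{d-2}(-e_2)^{n_2-1}$) and destroys the additivity your plan relies on.
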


\begin{proof}
Let $(e_1, e_2)$ be a basis of $G$ with $\ord (e_i) = n_i$ for $i \in [1,2]$, and let $k \in \N$. For $i \in [1,2]$, we set
$U_i = e_i^{n_i}$ and $V_i = (-e_i)e_i$. Then
\[
(-U_i)^k U_i^k = (-U_i)^{k - \nu} U_i^{k-\nu} V_i^{\nu n_i} \quad \text{for all} \quad \nu \in [0,k] \,,
\]
and hence
\[
\mathsf L \big( (-U_i)^k U_i^k \big) =  2k+ \{\nu(n_i-2) \mid \nu \in [0,k] \} \,.
\]

1. We set $h = (d-1)e_1$, $W_1 = (-e_1)^{d-1}h$, and $W_2= e_1^{n_1-(d-1)}h$. Then $\mathsf Z (U_1W_1)=\{ U_1W_1, V_1^{d-1}W_2\}$ and $\mathsf L (U_1W_1)=\{2,d\}$. Therefore
\[
\begin{aligned}
\mathsf L \big( (-U_2)^k U_2^k U_1 W_1 \big) & = \mathsf L \big( (-U_2)^k U_2^k \big) + \mathsf L \big( U_1 W_1 \big) \\
 & = \{2k+\nu(n_2-2) \mid \nu \in [0,k] \} + \{2, d \} \\
 & = (2k+2) + \{\nu (n_2-2) \mid \nu \in [0,k] \} + \{0, d-2\} \,.
\end{aligned}
\]

2. We define
\[
\begin{aligned}
W_1 = e_1^{n_1-1}e_2^{n_2-1}(e_1+e_2), \ W_2 & = (-e_1)e_2^{n_2-1}(e_1+e_2), \ W_3 = e_1^{n_1-1}(-e_2)(e_1+e_2), \ W_4 = (-e_1)(-e_2)(e_1+e_2) \,, \\
\text{and} \qquad B_k & = W_1(-U_1)(-U_2)U_2^k (-U_2)^k \,.
\end{aligned}
\]
Then any factorization of $B_k$ is divisible by precisely one of $W_1, \ldots, W_4$, and we obtain that
\[
\begin{aligned}
B_k & = W_1(-U_1)(-U_2)U_2^k (-U_2)^k = W_2 V_1^{n_1-1}(-U_2) U_2^k (-U_2)^k \\
  & = W_3 (-U_1) V_2^{n_2-1} U_2^k (-U_2)^k  = W_4 V_1^{n_1-1} V_2^{n_2-1} U_2^k (-U_2)^k \,.
\end{aligned}
\]
Thus it follows  that
\[
\begin{aligned}
\mathsf L (B_k) & =  \{3, n_1+1,n_2+1,n_1+n_2-1\} + \mathsf L \big( U_2^k (-U_2)^k \big)  \\ & = (2k+3) + \{\nu (n_2-2) \mid \nu \in [0,k] \} \ \cup \\
    &  \qquad (2k+3)  + (n_1-2) + \{\nu (n_2-2) \mid \nu \in [0,k] \} \ \cup \\
    &  \qquad (2k+3)  + (n_2-2) + \{\nu (n_2-2) \mid \nu \in [0,k] \} \ \cup \\
    & \qquad   (2k+3)  +  (n_1-2) + (n_2-2) + \{\nu (n_2-2) \mid \nu \in [0,k] \} \,.
\end{aligned}
\]
Thus $\max \mathsf L (B_k) = (kn_2+3) + (n_1-2)+(n_2-2)$ and
\[
\mathsf L (B_k) =  \Big(
(2k+3) + \{0, n_1-2, n_2-2 \} + \{\nu (n_2-2) \mid \nu \in [0,k] \}  \Big) \cup \{\max \mathsf L (B_k)\}  \,. \qedhere
\]
\end{proof}

\begin{proposition} \label{3.6}
Let $G$ be a cyclic group of order $|G|=n \ge 4$, and let $d \in [3, n-1]$.
\begin{enumerate}
\item  For each $k \in \N_0$, we have
       \[
       (2k+2) + \{ 0 , d - 2 \} + \{\nu (n-2) \mid \nu \in [0,k] \}  \in \mathcal L (G)  \,.
       \]

\item For each $k \in \N_0$, we set
      \[
      L_k =   \Big(
      (2k+3) + \{0, d-2, n-2 \} + \{\nu (n-2) \mid \nu \in [0,k] \}  \Big) \cup \Big\{(kn+3) + (d-2)+(n-2) \Big\} \,.
      \]
      Then for each $k \in \N_0$ and each $m \in \N_0$, we have $-m+L_k \notin \mathcal L (G)$.
\end{enumerate}
\end{proposition}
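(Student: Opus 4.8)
# Proof Proposal for Proposition 3.6

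My plan is to prove both statements by explicit construction over a cyclic group, mirroring the strategy already used for rank-two groups in Proposition 3.5, and then to verify the key non-occurrence claim in part 2 by a careful length count. Let $g \in G$ be a generator, so $\ord(g) = n$, and set $U = g^n$, the minimal zero-sum sequence consisting of $n$ copies of $g$, together with $V = g(-g) = g \cdot (n-1)g$. As in the rank-two case, one computes $\mathsf L\big((-U)^k U^k\big) = 2k + \{\nu(n-2) \mid \nu \in [0,k]\}$, since each swap of a pair $U(-U)$ for $V^n$ costs $n-2$ in length. This gives the backbone progression $\{\nu(n-2) \mid \nu \in [0,k]\}$ that appears in both parts.

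For part 1, I would adapt the construction of $W_1, W_2$ from the proof of Proposition 3.5.1 to the cyclic setting. The plan is to produce a single block $A$ with $\mathsf L(A) = \{2, d\}$ (this is where the parameter $d \in [3,n-1]$ enters), for instance by choosing an element $h = (d-1)g$ and setting $W_1 = (-g)^{d-1} h$ so that $W_1$ and a competing factorization through $V$-powers realize exactly the two lengths $2$ and $d$. Multiplying this block against $(-U)^k U^k$ and using that sets of lengths are additive under multiplication, $\mathsf L(BC) \supseteq \mathsf L(B) + \mathsf L(C)$, I would obtain
\[
(2k+2) + \{0, d-2\} + \{\nu(n-2) \mid \nu \in [0,k]\} \in \mathcal L(G).
\]
The one point requiring care is confirming that the product has \emph{exactly} this set of lengths and no extra lengths creep in; this follows because any factorization is forced to use $W_1$ in one of its two ways and the $\pm U$ part contributes independently.

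The substance of the proposition is part 2, and the main obstacle is proving that the shifted sets $-m + L_k$ are never realized over $G$. Here the crucial input is Proposition 3.3.3: for a cyclic group of order $n \ge 4$ one has $\max\big(\Delta^*(G) \setminus \{n-2\}\big) = \lfloor n/2 \rfloor - 1$. The set $L_k$ is constructed (as in Proposition 3.5.2) to be an AAMP whose ``large'' gap is $n-2$ but which also contains a difference of size $d-2$ where $d \le n-1$, so the interaction of two distinct differences $n-2$ and $d-2$ in the same set of lengths is precisely what cannot occur over a cyclic group. My plan is to argue by contradiction: if some $-m + L_k = \mathsf L(A)$ for $A \in \mathcal B(G)$, then the structure of $L_k$ forces the distance set $\Delta(\mathsf L(A))$ to contain $n-2 = \max \Delta^*(G)$, which by the Structure Theorem (Proposition 3.2.1) pins the difference of the AAMP to $n-2$; but the presence of the isolated maximal element $(kn+3)+(d-2)+(n-2)$ together with the periodic pattern with period $\{0,d-2,n-2\}$ is incompatible with $L_k$ being, up to shift, an AAMP with difference $n-2$ of the form forced over a cyclic group. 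Quantifying this incompatibility — showing that no choice of the bound $M$ and period $\mathcal D$ in Definition 3.1 can accommodate $-m+L_k$ as a genuine set of lengths over $G$ — is the delicate heart of the argument, and I expect it to rely on the precise gap structure of $L_k$ near its maximum rather than on any soft invariant.
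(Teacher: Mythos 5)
Your part 1 is essentially the paper's argument: the same blocks $U=g^n$, $V=(-g)g$, $W_1=((d-1)g)(-g)^{d-1}$ with $\mathsf L(UW_1)=\{2,d\}$, multiplied against $(-U)^kU^k$, and the verification that no extra lengths appear because every factorization of the product must contain $W_1$ or its partner $W_2=((d-1)g)g^{n-(d-1)}$. That part is fine.

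Part 2 has a genuine gap, and moreover the mechanism you propose is the wrong one. First, your guiding heuristic --- that ``the interaction of two distinct differences $n-2$ and $d-2$ in the same set of lengths is precisely what cannot occur over a cyclic group'' --- is contradicted by part 1 of this very proposition: the set $(2k+2)+\{0,d-2\}+\{\nu(n-2)\mid \nu\in[0,k]\}$ already realizes both distances $d-2$ and $n-d$ (hence a period $\{0,d-2\}$ inside an $(n-2)$-periodic progression) over $C_n$, and Lemma \ref{4.5} shows genuine AMPs with period $\{0,2,3\}$ over $C_5$. So no argument that tries to exclude $-m+L_k$ purely on the grounds of its period structure as an AAMP can succeed, and you never actually carry out the ``delicate heart'' you defer to. Second, the actual obstruction is exactly the kind of soft invariant you dismiss: one has $\min L_k=2k+3$ and $\max L_k=(kn+3)+(d-2)+(n-2)=(k+1)n+(d-1)$, while Proposition \ref{3.4}.2 gives, for cyclic $G$ of order $n$, the sharp value $\rho_{2k+3}(G)=(k+1)n+1$. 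Since $d\ge 3$, any $B\in\mathcal B(G)$ with $\mathsf L(B)=L_k$ would have $\max\mathsf L(B)=(k+1)n+(d-1)>(k+1)n+1=\rho_{2k+3}(G)$, a contradiction. The shifts are then immediate: if $\mathsf L(B)=-m+L_k$ for some $m\in\N_0$, then $\mathsf L(0^mB)=L_k\in\mathcal L(G)$, which was just excluded. You would need to replace your structural sketch by this (two-line) elasticity computation, or by some other completed argument, for part 2 to stand.
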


\begin{proof}
Let $k \in \N_0$.

1. Let  $g \in G$ with $\ord (g) = n$, $U=g^n$, $V=(-g)g$, $W_1= \big( (d-1)g \big) (- g)^{d-1}$, $W_2= \big( (d-1)g \big) g^{n-(d-1)}$, and
$B_k = \big( (-U)U \big)^k U W_1$.
Then $\mathsf Z (UW_1) = \{ UW_1, W_2 V^{d-1}\}$ and $\mathsf L (UW_1)= \{2,d\}$. Since every factorization of $B_k$ is divisible either by $W_1$ or by $W_2$, it follows that
\[
\begin{aligned}
\mathsf L ( B_k ) & = \mathsf L \big( (-U)^k U^k \big) + \mathsf L \big( U W_1 \big)  = \{2k+\nu(n-2) \mid \nu \in [0,k] \} + \{2, d \} \\
 & = (2k+2) + \{\nu (n-2) \mid \nu \in [0,k] \} + \{0, d-2\} \,.
\end{aligned}
\]

2. Note that $\max L_k = (kn+3) + (d-2)+(n-2) = (k+1)n+(d-1)$. Assume to the contrary that there is a $B_k \in \mathcal B (G)$ such that $\mathsf L (B_k)=L_k$. Then $\min \mathsf L (B_k) = 2k+3$ and, by Proposition \ref{3.4},
\[
(k+1)n+(d-1) = \max \mathsf L (B_k) \le \rho_{2k+3} (G) = (k+1)n+1 \,,
\]
a contradiction. If $m \in \N_0$ and $B_{m,k} \in \mathcal B (G)$ such that $\mathsf L (B_{m,k})=-m+L_k$, then $\mathsf L (0^m B_{m,k})=L_k \in \mathcal L (G)$. Thus $-m+L_k \notin \mathcal L (G)$ for any $m \in \N_0$.
\end{proof}

\begin{proposition} \label{3.7}
Let $G = C_2^{r-1} \oplus C_n$ where $r, n \in \N_{\ge 2}$ and $n$ is even.
\begin{enumerate}
\item For each $k \in \N_0$, we have
      \[
      L_k = (2k+2) + \{0, n-2, n+r-3\} + \{\nu (n-2) \mid \nu \in [0,k] \} \in \mathcal L (G) \quad \text{yet} \quad L_k \notin \mathcal L (C_n) \,.
      \]

\item For each $k \in \N_0$, we have
      \[
      \Big(
      (2k+3) + \{0, r-1, n-2 \} + \{\nu (n-2) \mid \nu \in [0,k] \}  \Big) \cup \Big\{(kn+3) + (r-1)+(n-2) \Big\}   \in \mathcal L (G) \,.
       \]
\end{enumerate}
\end{proposition}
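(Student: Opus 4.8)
The plan is to exhibit, for every $k$, an explicit zero-sum sequence whose set of lengths is the prescribed one, and to treat both parts with a \emph{single} family of examples. Fix a basis $(e_1, \dots, e_{r-1}, e_r)$ of $G$ with $\ord (e_i) = 2$ for $i \in [1,r-1]$ and $\ord (e_r) = n$, and put $g = e_1 + \dots + e_{r-1} + e_r$. Since $n$ is even we have $\ord (g) = n$ and $-g = e_1 + \dots + e_{r-1} + (n-1)e_r$. I would work throughout with
\[
B_M = e_r^{M}\,(-e_r)^{M}\,\Big(\prod_{i=1}^{r-1} e_i^2\Big)\,g\,(-g) \in \mathcal B (G) ,
\]
and show that $\mathsf L (B_{(k+1)n-1})$ is the set of part 1 while $\mathsf L (B_{(k+1)n})$ is the set of part 2. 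The two parts thus differ only by raising the exponent $M$ from $(k+1)n-1$ to $(k+1)n$, and this single change is precisely what adds the extra length $(kn+3)+(r-1)+(n-2)$ and shifts the minimum from $2k+2$ to $2k+3$. Note that $B_{(k+1)n-1} = W(-W)(e_r)^{kn}(-e_r)^{kn}$, where $W = e_1\cdots e_{r-1}e_r^{n-1}g$ is an atom of maximal length $n+r-1 = \mathsf D (G)$; this is the shape familiar from Proposition \ref{3.5}.

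First I would determine all atoms dividing a power of $B_M$, i.e. $\mathcal A (G_0)$ for $G_0 = \supp (B_M) = \{\pm e_r, e_1, \dots, e_{r-1}, g, -g\}$. Because $g$ and $-g$ occur with multiplicity one and $-e_i = e_i$, a short case distinction according to whether an atom contains $g$, $-g$, both, or neither shows that the complete list is
\[
e_i^2 ,\quad V = (-e_r)e_r ,\quad V_g = g(-g) ,\quad U = e_r^n ,\quad -U ,\quad Y = e_1\cdots e_{r-1}(-e_r)g ,\quad -Y ,\quad W ,\quad -W .
\]
The two local computations that drive everything are $\mathsf L\bigl(U(-U)\bigr) = \{2,n\}$, giving the difference $n-2$ of the $C_n$-direction, and, crucially, $\mathsf L\bigl(Y(-Y)\bigr) = \{2, r+1\}$, whose length-$(r+1)$ factorization is $V_g\cdot V\cdot \prod_{i=1}^{r-1} e_i^2$. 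This ``$C_2$-trade'' is neutral in the $e_r$-component and supplies the difference $r-1 = \mathsf r (G)-1$, which by Proposition \ref{3.3} no subset of a subgroup of smaller rank can produce.

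Given the atom list, computing $\mathsf L (B_M)$ is bookkeeping: writing a factorization as $U^{u}(-U)^{u'}V^{v}V_g^{w}Y^{y}(-Y)^{y'}W^{t}(-W)^{t'}\prod_i e_i^{2b_i}$ and matching the multiplicities of $g$, $-g$, $e_r$, $-e_r$ and the $e_i$ yields a small linear system. Since $g$ and $-g$ each appear once one has $w \in \{0,1\}$, and the analysis splits into the branch $w=1$ (which forces $V_g$ together with all $e_i^2$ and realizes the period point $r-1$) and the branch $w=0$ (which uses either $W,-W$ or $Y,-Y$ and produces the backbone of difference $n-2$). Carrying this out for $M=(k+1)n-1$ and $M=(k+1)n$ and rewriting the result in the normal form of Definition \ref{3.1} gives exactly the two displayed sets. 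I expect the only real obstacle here to be organizational: keeping the four sub-cases $(t,y),(t',y')\in\{(1,0),(0,1)\}$ apart and verifying that none of them yields an \emph{unwanted} intermediate length, so that the computed set is exactly, not merely contained in, the claimed one.

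Finally, for the non-occurrence assertion $L_k \notin \mathcal L (C_n)$ when $r\ge 2$ one reads off $\min L_k = 2k+2$ and $\max L_k = (k+1)n + r-1$. If $L_k = \mathsf L (A)$ for some $A \in \mathcal B (C_n)$, then $2k+2 \in \mathsf L (A)$ forces $\max \mathsf L (A) \le \rho_{2k+2}(C_n) = (k+1)\mathsf D (C_n) = (k+1)n$ by Proposition \ref{3.4}, contradicting $\max L_k = (k+1)n + r-1 > (k+1)n$. The one point that needs separate care is the degenerate case $n=2$, where $G = C_2^{r}$, $-e_r = e_r$ and $Y = W$, so the atom analysis collapses; this case must be handled directly or deduced from the known description of $\mathcal L$ for elementary $2$-groups.
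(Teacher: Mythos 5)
Your proposal is correct and essentially reproduces the paper's own proof: your $B_{(k+1)n-1}$ and $B_{(k+1)n}$ are exactly the paper's witnesses $(-W)W(-U_r)^{k}U_r^{k}$ and $U_0U_1\cdots U_{r-1}(-U_r)^{k+1}U_r^{k+1}$ (your $g$ is the paper's $e_0+e_r$), the bookkeeping according to which atoms carry $g$ and $-g$ is the paper's case distinction, and the argument that $L_k\notin\mathcal L(C_n)$ via $\rho_{2k+2}(C_n)=(k+1)n$ is identical. The only blemish is that the nine sequences you list form the set of atoms dividing $B_M$ rather than all of $\mathcal A\bigl(\supp(B_M)\bigr)$ (which also contains, e.g., $g^2e_r^{n-2}$ for $n\ge 4$), but since only divisors of $B_M$ enter the computation this is harmless.
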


\begin{proof}
Let $k \in \N_0$, $(e_1, \ldots, e_{r-1}, e_r)$ be a basis of $G$ with $\ord (e_1)= \ldots = \ord (e_{r-1})=2$ and $\ord (e_r)=n$.  We set $e_0=e_1+ \ldots + e_{r-1}$, $U_i = e_i^{\ord (e_i)}$ for each $i \in [1, r]$, $U_0 = (e_0+e_r)(e_0-e_r)$, $V_r = (-e_r)e_r$,
\[
V = e_1 \cdot \ldots \cdot e_{r-1}(e_0+e_r)(-e_r) \,, \quad \text{and} \quad W =  e_1 \cdot \ldots \cdot e_{r-1}(e_0+e_r) e_r^{n-1} \,.
\]

1.  Obviously, $\mathsf L \big( (-W)W \big) = \{2, n, n+r-1\}$ and
\[
\begin{aligned}
\mathsf L \big( (-W)W (-U_r)^{k} U_r^{k} \big) & = \mathsf L \big( (-W)W \big) + \mathsf L \big( (-U_r)^{k} U_r^{k}  \big) \\
 & = \{2, n, n+r-1\} + \{2k+\nu (n-2) \mid \nu \in [0,k] \} \\
 & = (2k+2) + \{0, n-2, n+r-3\} + \{\nu (n-2) \mid \nu \in [0,k] \}
\end{aligned}
\]
Since $\min L_k = 2k+2$, $\max L_k = (k+1)n+r-1$, and $\rho_{2k+2}(C_n) = (k+1)n$ by  Proposition \ref{3.4}, $r \ge 2$ implies that $L_k \notin \mathcal L (C_n)$.

2. Let $L_k$ denote the set in the statement. We define
\[
B_k = U_0U_1 \cdot \ldots \cdot U_{r-1} (-U_r)^{k+1} U_r^{k+1}
\]
and assert that $\mathsf L (B_k)=L_k$. Let $z$ be a factorization of $B_k$. We distinguish two cases.

\noindent
CASE 1: \ $U_1 \t z$.

Then $U_0U_1 \cdot \ldots U_{r-1} \t z$ which implies that $z = U_0U_1 \cdot \ldots \cdot U_{r-1} \big( (-U_r)U_r \big)^{k+1 - \nu} V_r^{\nu n} $ for some $\nu \in [0, k+1]$ and hence
$|z| \in r + (2k+2) + \{ \nu (n-2) \mid \nu \in [0, k+1] \}$.

\noindent
CASE 2: \ $U_1 \nmid z$.

Then either $V \t z$ or $W \t z$. If $V \t z$, then $z = (-V)V  V_r^{n-1} \big( (-U_r)U_r \big)^{k - \nu} V_r^{\nu n} $ for some $\nu \in [0, k]$ and hence $|z| \in (n+1) + 2k + \{ \nu (n-2) \mid \nu \in [0, k] \}$.
If $W \t z$, then $z = (-W)W V_r \big( (-U_r)U_r \big)^{k - \nu} V_r^{\nu n} $ for some $\nu \in [0, k]$ and hence $|z| \in 3 + 2k + \{ \nu (n-2) \mid \nu \in [0, k] \}$.
Putting all together the assertion follows.
\end{proof}

\begin{proposition} \label{3.8}
Let $G$ be a finite abelian group, $g \in G$ with $\ord (g)=n\ge 5$, and  $B \in \mathcal{B}(G)$ such that  $\big( (-g)g \big)^{2n} \t B$.
Suppose $\mathsf{L}(B)$ is an {\rm AAMP} with period $\{0,d, n-2 \}$ for some $d \in [1, n-3] \setminus \{(n-2)/2\}$.
\begin{enumerate}
\item  If $S \in \mathcal A \big(\mathcal B_{\langle g \rangle}(G) \big)$ with $S \t B$, then  $\sigma(S) \in \{0,g,-g, (d+1)g, -(d+1)g \}$.

\item  If $S_1, S_2 \in \mathcal A \big(\mathcal B_{\langle g \rangle}(G) \big)$ with $S_1S_2 \t B$,  then $\sigma(S_i) \in \{0,g,-g\}$ for at least one $i \in [1,2]$.
\end{enumerate}
\end{proposition}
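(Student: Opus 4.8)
The plan is to exploit the cyclic subgroup $\langle g\rangle$ together with the abundant block $((-g)g)^{2n}$: from any atom $S$ of $\mathcal{B}_{\langle g\rangle}(G)$ dividing $B$ I would manufacture explicit pairs of factorizations of $B$ whose lengths differ by a quantity read off from $\sigma(S)$, and then match that difference against the period $\{0,d,n-2\}$. First I would pin down the atoms. Writing $\sigma(S)=jg$ with $j\in[0,n-1]$, the values $\sigma(S)\in\{0,g,-g\}$ (that is $j\in\{0,1,n-1\}$) are permitted, so assume $j\notin\{0,1,n-1\}$. A single element $(h)$ with $h\in\langle g\rangle$ is an atom of $\mathcal{B}_{\langle g\rangle}(G)$, while any atom of length $\ge 2$ can contain no element of $\langle g\rangle$ at all (such an element alone would be a proper subsequence whose sum lies in $\langle g\rangle$); in particular $\mathsf v_g(S)=\mathsf v_{-g}(S)=0$ in every case that concerns us. Using minimality of $S$ in $\mathcal{B}_{\langle g\rangle}(G)$ I would then check that $S(-g)^{j}$ and $Sg^{\,n-j}$ are atoms of $\mathcal B(G)$: any zero-sum subsequence has the form $T(-g)^{i}$ (resp. $Tg^{i}$) with $T\mid S$ and $\sigma(T)\in\langle g\rangle$, which forces $T\in\{1,S\}$ and then $i\in\{0,j\}$ (resp. $i\in\{0,n-j\}$).

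The heart of the construction uses $U=g^{n}$, $\bar U=(-g)^{n}$, $V=(-g)g$ and the subsequences $Z'=S(-g)^{j}g^{n}$ and $Z=Sg^{\,n-j}(-g)^{n}$ of $B$; since $\mathsf v_{\pm g}(S)=0$ and $((-g)g)^{2n}\mid B$, both divide $B$. Because $S$ must be completed by either all $j$ copies of $-g$ or all $n-j$ copies of $g$, one computes directly $\mathsf L(Z')=\{2,j+1\}$ (via $Z'=\bigl(S(-g)^{j}\bigr)U=\bigl(Sg^{\,n-j}\bigr)V^{j}$) and $\mathsf L(Z)=\{2,n-j+1\}$. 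Hence for every $\ell\in\mathsf L(B/Z')$ both $2+\ell$ and $(j+1)+\ell$ lie in $\mathsf L(B)$: a pair of lengths differing by exactly $j-1$. If this pair lies in the AMP-part $L^*$ of the AAMP $\mathsf L(B)$, then both of its residues modulo $n-2$ lie in $\{0,d\}$, so the difference satisfies $j-1\in\{0,d,\,n-2-d,\,n-2\}$, i.e. $j\in\{1,d+1,n-1-d,n-1\}$, i.e. $\sigma(S)\in\{0,g,-g,(d+1)g,-(d+1)g\}$. This is Part 1.

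For Part 2, by Part 1 the only excluded possibilities are $\sigma(S_i)=\pm(d+1)g$, so assume both $S_1,S_2$ are of this form. Applying the $Z'$-construction to a summand of sum $(d+1)g$ and the $Z$-construction to one of sum $-(d+1)g$, each contributes the offset $d$; since both atoms avoid $\pm g$ and $((-g)g)^{2n}$ supplies the (at most $2n$) copies of $g$ and of $-g$ required, the two constructions can be run on disjoint parts of $B$ at once. This yields a pair of lengths in $\mathsf L(B)$ differing by $2d$. But $d\in[1,n-3]\setminus\{(n-2)/2\}$ forces $2d\not\equiv 0,d\pmod{n-2}$, so such a pair cannot lie in $L^*$; the resulting contradiction shows that at least one $\sigma(S_i)\in\{0,g,-g\}$.

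The hard part will be exactly the clause ``if this pair lies in $L^*$''. The AAMP may have irregular stretches of length up to the bound $M$ at either end, whereas the lengths I manufacture sit near $\min\mathsf L(B)$ and could a priori fall into the lower irregular part $L'$ rather than the bulk. My plan to overcome this is to translate the pair upward into $L^*$ by replacing short factorizations of $U\bar U$-blocks in the complementary sequence by the long factorization $V^{n}$, each such swap shifting the length by the period $n-2$ without altering residues modulo $n-2$; the hypothesis $((-g)g)^{2n}\mid B$ is precisely what guarantees such blocks are present. Making this quantitative --- certifying that a representative of the pair can be pushed at least $M$ above $\min\mathsf L(B)$ and $M$ below $\max\mathsf L(B)$, so that the period residues genuinely apply --- is the delicate bookkeeping step that I expect to demand the most care.
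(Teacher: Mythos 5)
Your Part 1 is essentially the paper's argument: from $\sigma(S)=jg$ with $j\in[2,n-2]$ one forms the atoms $S(-g)^{j}$ and $Sg^{\,n-j}$, trades $\bigl(S(-g)^{j}\bigr)\,g^{n}$ for $\bigl(Sg^{\,n-j}\bigr)\,\bigl((-g)g\bigr)^{j}$ inside a factorization of $B$, and reads $j-1$ off against the period. The step you single out as ``the hard part'' is vacuous: by Definition \ref{3.1}, an AAMP with difference $n-2$ and period $\mathcal D=\{0,d,n-2\}$ satisfies $L\subset y+\mathcal D+(n-2)\Z$ \emph{as a whole}, i.e.\ the irregular ends $L'$ and $L''$ are also confined to the residue classes $y$ and $y+d$ modulo $n-2$. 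So the residue comparison applies to any pair of lengths in $\mathsf L(B)$, and no translation into $L^*$ is required. That is fortunate, because your proposed remedy would not suffice anyway: $\bigl((-g)g\bigr)^{2n}\mid B$ only guarantees a bounded number of $g^{n}(-g)^{n}\to\bigl((-g)g\bigr)^{n}$ swaps, whereas the AAMP bound $M$ is not controlled by the hypotheses.

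Part 2, however, has a genuine gap. Two elements of a set contained in $y+\{0,d\}+(n-2)\Z$ may differ by anything congruent to $0$, $d$, \emph{or} $-d$ modulo $n-2$; you exclude only the first two possibilities for the difference $2d$. Indeed $2d\equiv 0$ forces $d=(n-2)/2$ (excluded) and $2d\equiv d$ is impossible, but $2d\equiv -d\pmod{n-2}$, i.e.\ $3d\equiv 0\pmod{n-2}$, is perfectly consistent with $d\in[1,n-3]\setminus\{(n-2)/2\}$: take $n=11$ and $d=3$, so that $2d=6\equiv -3\pmod 9$, and the lengths $y+3$ and $y+9$ both lie in $y+\{0,3\}+9\Z$ while differing by $2d$. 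Hence for $d=(n-2)/3$ or $d=2(n-2)/3$ your pair yields no contradiction. The repair is to use the \emph{three} lengths your construction already produces --- $\ell$, $\ell+d$, $\ell+2d$, obtained by switching neither, one, or both of the two atoms: since $d\not\equiv 0$ and $2d\not\equiv 0\pmod{n-2}$ these have pairwise distinct residues modulo $n-2$, and three distinct residues cannot fit into the two-element set $\{y,y+d\}\bmod (n-2)$. This is in substance what the paper's proof does (it first deduces $k_1=k_2$ from the intermediate lengths and only then invokes the factorization in which both atoms are switched).
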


\begin{proof}
By definition, there is a $y \in \Z$ such that $\mathsf L (B) \subset y + \{0,d,n-2\} + (n-2) \Z$. We set $U=g^n$ and $V=(-g)g$.

1. Let $S \in \mathcal A \big(\mathcal B_{\langle g \rangle}(G) \big)$ with $S \t B$ and set $\sigma(S)= kg$ with $k \in [0,n-1]$. If $k \in \{0,1,n-1\}$, then we are done. Suppose that $k \in [2, n-2]$. Since $S$ is an atom in $\mathcal B_{\langle g \rangle}(G)$, it follows that  $W_1=S(-g)^{k} \in \mathcal{A}(G)$ and $W_1'= S g^{n-k} \in \mathcal A (G)$. We consider a factorization $z \in \mathsf{Z}(B)$ with $U W_1 \t z$, say $z= U W_1 y$.
Then  $z' = W_1' V^k y$ is a factorization of $B$ of length $|z'|= |z| + k-1$.  Since $\mathsf{L}(B)$ is an AAMP with period $\{0,d, n-2 \}$ for some $d \in [1, n-3] \setminus \{(n-2)/2\}$
it follows that $k-1 \in \{d, n-2 -d\}$.

2. Let $S_1, S_2 \in \mathcal A \big(\mathcal B_{\langle e \rangle}(G) \big)$ with $S_1S_2 \t B$, and assume to the contrary $\sigma(S_i)= k_i e$ with $k_i \in [2, n-2]$ for each $i \in [1,2]$. As in 1. it follows that
\[
W_1=S_1(-g)^{k_1} , \ W_1'= S_1 g^{n-k_1} , \ W_2=S_2(-g)^{k_2} , \quad \text{and} \quad  W_2'= S_2 g^{n-k_2}
\]
are in $\mathcal A (G)$. We consider a factorization $z \in \mathsf Z (B)$ with $UW_1U W_2 \t z$, say $z=UW_1UW_2y$. Then $z_1=W_1'V^{k_1-1}UW_2y \in \mathsf Z (B)$ with $|z_1|=|z|+k_1-1$ and hence $k_1-1\in \{d, n-2-d\}$. Similarly, $z_2=UW_1W_2'V^{k_2-1}y \in \mathsf Z (B)$, hence $k_2-1\in \{d, n-2-d\}$, and furthermore it follows that $k_1=k_2$. Now $z_3=W_1'V^{k_1-1}W_2'V^{k_2-1}y \in \mathsf Z (B)$ is a factorization of length $|z_3|=|z|+k_1+k_2-2$.
Thus, if $k_1 -1 =d$, then $2d \in  \{n - 2, n -2 + d\}$, a contradiction, and if $k_1 -1 =n -2 - d$, then $2(n -2 - d) \in  \{n - 2, n -2 + (n -2 - d)\}$, a contradiction.
\end{proof}

\section{A set of lengths not contained in  $\mathcal L (C_{n_1} \oplus C_{n_2})$} \label{5}

The aim of this section is to prove the following proposition.

\begin{proposition} \label{5.1}
Let $G = C_{n_1} \oplus C_{n_2}$ where $n_1, n_2 \in \N$ with  $n_1 \t n_2$ and $6 \le n_1 < n_2$. \newline Then $\{2, n_2, n_1+n_2-2\} \notin \mathcal L (G)$.
\end{proposition}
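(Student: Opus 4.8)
The plan is to argue by contradiction: assume there is a $B \in \mathcal B(G)$ with $\mathsf L(B) = \{2, n_2, n_1+n_2-2\}$ and produce a factorization of forbidden length. Throughout set $D = \mathsf D(G) = n_1 + n_2 - 1$, which holds by Proposition \ref{2.3} since $\mathsf r(G) = 2$. First I would dispose of the element $0$: if $0 \mid B$ with multiplicity $m \ge 1$, then $\mathsf L(B) = m + \mathsf L(0^{-m}B)$, so $\mathsf L(0^{-m}B)$ is again a three-element set but with minimum $2-m \le 1$; as any set of lengths with minimum $\le 1$ is a singleton, this is impossible and hence $B \in \mathcal B(G^{\bullet})$. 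Since $\min \mathsf L(B) = 2$ we may write $B = U_1 U_2$ with $U_1, U_2 \in \mathcal A(G)$, and since $\max \mathsf L(B) = D-1$ there is a factorization $B = W_1 \cdots W_{D-1}$ into $D-1$ atoms.

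Next I would extract a rigid combinatorial skeleton from a length count. On one side $|B| = |U_1| + |U_2| \le 2D$; on the other side every atom has length at least $2$, so $|B| = \sum_{i=1}^{D-1} |W_i| \ge 2(D-1)$. Writing $|B| - 2(D-1) = \sum_{|W_i| \ge 3}(|W_i| - 2) \le 2$, I conclude that $|B| \in \{2D-2, 2D-1, 2D\}$ and that the long factorization consists of at least $D-3$ atoms of length exactly $2$ together with at most two short atoms of total length at most $6$. As a length-$2$ atom is necessarily of the form $g(-g)$ (with $g=-g$ allowed), this shows $B = C \cdot R$, where $C = \prod_j g_j(-g_j)$ is symmetric (that is, $C = -C$) with at least $D-3$ factors and $R$ is the product of at most two atoms with $|R| \le 6$. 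Moreover $|U_1| + |U_2| \ge 2D-2$ together with $|U_i| \le D$ forces $|U_i| \ge D-2$, so both $U_1$ and $U_2$ are near-maximal atoms.

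The heart of the argument is then a structural analysis of $U_1, U_2$ via the characterization of minimal zero-sum sequences of maximal length over rank two groups (Lemma \ref{structure}). In the principal case $|U_1| = |U_2| = D$ one fixes a basis $(e_1, e_2)$ adapted to $U_1$ and writes $U_1$ in normal form; the size and symmetry of the bulk $C$ then force $U_2$ to agree with $-U_1$ outside the short remainder $R$. Note that $U_2 = -U_1$ exactly would give $B = \prod_g g(-g)$ and hence $D \in \mathsf L(B)$, contradicting $\max \mathsf L(B) = D-1$; so the symmetry is broken precisely by $R$. Using the explicit shape of $U_1$, I would then recombine the elements of $B$ — merging the short atoms of $R$ with a controlled number of the symmetric pairs $g_j(-g_j)$ — so as to produce a factorization whose length lands strictly inside the forbidden gap $[3, n_2-1]$ (or $[n_2+1, n_1+n_2-3]$), contradicting $\mathsf L(B) = \{2, n_2, n_1+n_2-2\}$. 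The remaining cases $|B| \in \{2D-2, 2D-1\}$, where one or both of $U_1, U_2$ have length $D-1$ or $D-2$, are handled by the same recombination scheme applied to the corresponding near-maximal normal forms.

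The main obstacle is exactly this last, structural step: one must show that however the small asymmetry $R$ is distributed, recombination always yields a factorization length strictly between $2$ and $n_2$ (or between $n_2$ and $n_1+n_2-2$). This is where the hypotheses $6 \le n_1 < n_2$ are essential, the strict inequality $n_1 < n_2$ destroying the extra symmetry present for $C_n \oplus C_n$, and $n_1 \ge 6$ making both the gap $[3, n_2-1]$ wide and the normal form of a maximal atom rigid enough to guarantee the intermediate length. Indeed the same set of lengths \emph{does} occur over $C_2^{n_1-2} \oplus C_{n_2}$ (compare Proposition \ref{3.7}.1 with $r = n_1-1$), so any proof must genuinely use the rank two structure and cannot rest on the Structure Theorem for sets of lengths alone.
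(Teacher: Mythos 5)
Your setup is sound and runs parallel to the paper's: writing $B=U_1U_2$, bounding $|B|\in\{2D-2,2D-1,2D\}$, observing that a factorization of length $D-1$ forces all but at most two atoms to have the form $g(-g)$, and concluding that $U_2$ agrees with $-U_1$ outside a remainder $R$ of length at most $6$ -- this is exactly the paper's reduction to its cases (I)--(III) with $S=\gcd(U,-V)$ and $U=SU'$, $V=(-S)V'$. The genuine gap is in what you call the heart of the argument, and it is not a matter of omitted routine detail. First, Lemma \ref{structure} classifies minimal zero-sum sequences of length \emph{exactly} $\mathsf D(G)$; no such normal form exists for lengths $D-1$ or $D-2$, so your claim that the cases $|B|\in\{2D-2,2D-1\}$ are ``handled by the same recombination scheme applied to the corresponding near-maximal normal forms'' has nothing to apply the scheme to. The paper instead treats these configurations with auxiliary assertions (every atom $W$ with $|W|<|U|$ dividing $(-S)S$ has $|W|\in\{2,n_1\}$, and no two disjoint such atoms of length $n_1$ can occur) together with a subsequence-counting pigeonhole (Lemmas \ref{5.4} and \ref{5.5}) that manufactures the forbidden atoms.

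Second, even when $|U_1|=D$ and Lemma \ref{structure} applies, the normal form is only rigid enough to control recombination when some element of $U_1$ has multiplicity at least $n_2-3$ (this is the content of Corollary \ref{5.3}); a type I atom $e_1^{n_1-1}\prod_{\nu}(x_\nu e_1+e_2)$ with many distinct $x_\nu$ gives you no handle on which pairs $g(-g)$ you may merge with $R$. The paper resolves this by splitting on whether $\mathsf h(S)\ge n_2/2$ or not: in the high-multiplicity regime it reduces to Corollary \ref{5.3} and carries out the explicit recombinations you envisage (this alone occupies a dozen subcases), while in the low-multiplicity regime it abandons normal forms entirely and uses the counting lemmas to find two disjoint atoms of length $n_1$ in $(-S)S$, contradicting the auxiliary assertion. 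Your plan covers only the first regime, and only for atoms of full length $D$ with high repetition; to complete the proof you would need to add both the dichotomy on $\mathsf h(S)$ and the counting machinery for the spread-out case.
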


 Let $G = C_{n_1} \oplus C_{n_2}$ where $n_1, n_2 \in \N$ with $n_1 \t n_2$.  If $6 \le n_1 < n_2$ does not hold, then $\{2, n_2, n_1+n_2-2\}$ may or may not be a set of lengths (e.g., if $2 = n_1 \le n_2$, then $\{2, n_2 \} \in \mathcal L (G)$). By Proposition \ref{3.7}, $\{2, n_2, n_1+n_2-2\} \in \mathcal L (C_2^{n_1-2} \oplus C_{n_2})$, whence Proposition \ref{5.1} implies that $\mathcal L (C_{n_1} \oplus C_{n_2}) \ne \mathcal L (C_2^{n_1-2} \oplus C_{n_2})$.
Its proof is based on the characterization of all minimal zero-sum sequences of maximal length over groups of rank two. This characterization is due to Gao, Grynkiewicz, Reiher, and the present authors (\cite{Ga-Ge03b, Ga-Ge-Gr10a, Sc10b, Re10c}). We repeat the formulation given in \cite[Theorem 3.1]{B-G-G-P13a} and then derive a corollary.

\begin{lemma} \label{structure}
Let  $G = C_{n_1} \oplus C_{n_2}$ where $n_1, n_2 \in \N$ with $1 < n_1 \t n_2$.  A sequence $U$
over $G$ of length $\mathsf D (G) = n_1+n_2-1$ is a minimal zero-sum
sequence if and only if it has one of the following two forms{\rm \,:}
\begin{itemize}
\smallskip
\item \[
      U = e_j^{\ord (e_j)-1} \prod_{\nu=1}^{\ord (e_i)} (x_{\nu}e_j+e_i) \,, \quad \text{where}
      \]
      \begin{itemize}
      \item[(a)] $\{i,j\}= \{1, 2\}$ and $(e_1, e_2)$ is a basis of $G$ with $\ord(e_1)= n_1$ and $\ord(e_2)= n_2$,
      \item[(b)] $x_1, \ldots, x_{\ord (e_i)}  \in [0, \ord (e_j)-1]$ and $x_1 + \ldots + x_{\ord (e_i)} \equiv 1 \mod \ord (e_j)$.
      \end{itemize}
      In this case, we say that $U$ is of type I with respect to the basis $(e_i, e_j)$;

\smallskip
\item \[
      U = (e_1+ye_2)^{s n_1 - 1} e_2^{n_2 - s n_1 +\epsilon}\prod_{\nu=1}^{n_1 -\epsilon} ( -x_{\nu} e_1 +(-x_{\nu}y+1) e_2) \,, \quad \text{where}
      \]
      \begin{itemize}
      \item[(a)] $(e_1, e_2)$ is a basis of $G$ with $\ord(e_1)=n_1$ and $\ord (e_2) = n_2$,
      \item[(b)] $y\in [0,n_2-1]$, \ $\epsilon\in [1,n_1-1]$,  and $s \in [1, n_2/n_1 -1]$,
      \item[(c)] $x_1, \ldots, x_{n_1-\epsilon} \in [1, n_1-1]$ with $x_1 + \ldots + x_{n_1-\epsilon} = n_1-1$,
      \item[(d)] $n_1 y e_2\neq 0$, and
      \item[(e)] either  $s=1$ or $n_1 ye_2 = n_1e_2$.
      \end{itemize}
      In this case, we say that $U$ is of type II with respect to the basis $(e_1, e_2)$.
\end{itemize}
\end{lemma}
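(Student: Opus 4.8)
\emph{The plan} is to prove the two directions separately: sufficiency is a direct verification, whereas necessity is the deep inverse statement and is where all the difficulty lies.

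\emph{Sufficiency.} Given a sequence $U$ of Type $I$ or of Type $II$, I would first read off $|U| = n_1 + n_2 - 1$ by counting factors and check $\sigma(U) = 0$ by summing coordinates; conditions (b) (and (c) in Type $II$) are exactly what makes the coordinate sums vanish. For minimality one shows that no proper nonempty subsequence is zero-sum. In Type $I$ every factor $x_\nu e_j + e_i$ carries $e_i$-coordinate $1$, so the number of such factors in any zero-sum subsequence is divisible by $\ord(e_i)$, hence is $0$ or $\ord(e_i)$; the relation $\sum_\nu x_\nu \equiv 1 \pmod{\ord(e_j)}$ then forces the block $e_j^{\ord(e_j)-1}$ to be taken in full or not at all, leaving only $\emptyset$ and $U$. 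Type $II$ is the same idea with heavier bookkeeping.

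\emph{Necessity, the diagonal case.} Let $U$ be a minimal zero-sum sequence with $|U| = \mathsf D(G) = n_1 + n_2 - 1$. I would first settle $n_1 = n_2 = n$, where Type $II$ cannot occur because the range $[1, n_2/n_1 - 1]$ for $s$ is empty. The key input is Property $B$: some $g \mid U$ occurs with multiplicity $\exp(G) - 1 = n - 1$. Note first that $\ord(g) = n$, for if $\ord(g) = m \le n - 1$ then $g^{m}$ would be a proper zero-sum subsequence, contradicting minimality; hence $g$ extends to a basis, say $g = e_2$. Write $U = e_2^{\,n-1} T$ with $|T| = n$ and project via $\pi \colon G \to G/\langle e_2\rangle \cong C_n$. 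Since $\sigma(U) = 0$, the image $\pi(T)$ is zero-sum of length $n = \mathsf D(C_n)$, and minimality of $U$ forces $\pi(T)$ to have no proper nonempty zero-sum subsequence: if $\emptyset \ne T' \subsetneq T$ had $\pi(T')$ zero-sum, then $\sigma(T') = c e_2$ for some $c$, and $T'$ (when $c \equiv 0$) or $T' e_2^{\,n-c}$ (when $c \in [1, n-1]$) would be a proper zero-sum subsequence of $U$, a contradiction. Thus $\pi(T)$ is a minimal zero-sum sequence over $C_n$, so by the classical cyclic inverse result $\pi(T) = \overline{e}_1^{\,n}$ for a generator $\overline{e}_1$. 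After the matching basis change every factor of $T$ has $e_1$-coordinate $1$, and $\sigma(U) = 0$ reads $\sum_\nu x_\nu \equiv 1 \pmod n$; this is precisely Type $I$.

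\emph{The general case and the main obstacle.} For $n_1 \mid n_2$ with $n_1 < n_2$ the strong form of Property $B$ fails --- the Type $II$ sequences are witnesses, their dominant element sitting in a proper subgroup instead of having full order $n_2$ --- so the diagonal argument does not apply verbatim. Here I would follow the reductions of \cite{Sc10b, Ga-Ge-Gr10a}, which build the case $n_1 < n_2$ on the diagonal one by tracking how $U$ meets the cosets of $\langle e_1\rangle$; Type $II$ is exactly the output of the configurations where the dominant element has order below $n_2$. The genuine obstacle throughout is Property $B$ itself. For prime exponent it is Reiher's theorem \cite{Re10c}, obtained by the polynomial (Chevalley--Warning) method, and the prime-power and composite exponents are bootstrapped from it \cite{Ga-Ge03b, Ga-Ge-Gr10a, Sc10b}. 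Since a self-contained proof of Property $B$ is long and delicate, in the paper I would simply cite the assembled characterization in the form \cite[Theorem 3.1]{B-G-G-P13a}.
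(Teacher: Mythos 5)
Your proposal is correct and ends up doing exactly what the paper does: the paper gives no proof of Lemma \ref{structure}, instead citing the characterization assembled from \cite{Ga-Ge03b, Ga-Ge-Gr10a, Sc10b, Re10c} in the formulation of \cite[Theorem 3.1]{B-G-G-P13a}, which is precisely your conclusion. Your accompanying sketches --- the direct verification of sufficiency via coordinate counting, and the reduction of the diagonal case $n_1 = n_2$ to Property B together with the cyclic inverse theorem --- are accurate and consistent with how the cited proof actually proceeds.
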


We record some observations on this result.
If $n_1 = n_2$, then sequences of type II do not exist as the condition $n_1ye_2 \neq 0$ cannot hold.
Assume that $n_1 \neq n_2$. There are examples of sequences that are both of type I and of type II. However, such sequences are of a rather special form.

If a sequence $U$ is of type I with $j=2$, then it contains an element with multiplicity $n_2 - 1$.
Thus, $U$ is of type II  only when $s = 1$ and $\epsilon = n_1 - 1$ and consequently $x_1 = n_1 - 1$, that is,
$U = e_2^{n_2 - 1}(e_1 + ye_2)^{n_1 -  1}(e_1 + (-(n_1 -1)y+1)e_2)$ with $y \in [0,n_2 -  1]$ and  $n_1 y e_2\neq 0$. Such a sequence is indeed of type I.

If a sequence $U$ is of type I with $j=1$, then it contains an element of order $n_1$ with multiplicity $n_1 -1$.
Since the order of $e_1 + y e_2$ cannot be $n_1$, as $n_1 y e_2\neq 0$,  and the order of $e_2$ is not  $n_1$ either, this is only possible when $\epsilon = 1$ and consequently $x_1 = \dots = x_{n_1 - 1}  =  1$, that is, $U = (e_1+ye_2)^{s n_1 - 1} e_2^{n_2 - s n_1 +1} ( - e_1 +(-y+1) e_2)^{n_1 - 1}$.
If $n_1 ye_2 = n_1e_2$, then indeed   $e_1'=  -e_1 +(-y+1) e_2$ is an element of order $n_1$, we get that $(e_1',e_2)$ is a basis of $G$ and  $U$ is of type I with respect to the basis $(e_1', e_2)$, indeed, $U = e_1'^{n_1-1} ((n_1 -1)e_1' + e_2)^{s n_1 - 1} e_2^{n_2 - s n_1 +1} $ for some $s \in [1, n_2/n_1 -1]$.

\begin{corollary} \label{5.3}
Let  $G = C_{n_1} \oplus C_{n_2}$ where $n_1, n_2 \in \N$   with $n_1 \t n_2$ and $6 \le n_1 < n_2$, and let $U \in \mathcal A (G)$ with  $|U|=\mathsf D (G) = n_1+n_2-1$.
\begin{enumerate}
\item If \ $\mathsf h (U)=n_2-1$, then $U$ is of type I with respect to  a basis  $(e_1, e_2)$  with $\ord (e_1)= n_1$ and $\ord(e_2)= n_2$, that is
\[
U=  e_2^{\ord (e_2)-1} \prod_{\nu=1}^{\ord (e_1)} (x_{\nu}e_2+e_1)
\]
where $x_1, \ldots, x_{n_1}  \in [0, n_2-1]$ with $x_1 + \ldots + x_{n_1} \equiv 1 \mod n_2$.

\smallskip
\item If \ $\mathsf h (U)=n_2-2$, then
      \[
      U = (e_1+ye_2)^{n_1-1} e_2^{n_2-2} \big(-xe_1+(-xy+1)e_2 \big) \ \big(-(n_1-1-x)e_1+(-(n_1-1-x)y+1)e_2 \big) \,,
      \]
      where $(e_1,e_2)$ is a basis with $\ord (e_1)= n_1$, $\ord(e_2)= n_2$, $y \in [0, n_2-1]$, and $x \in [1,(n_1-1)/2]$.

\smallskip
\item If \ $\mathsf h (U)=n_2-3$, then
      \[
      U = (e_1+ye_2)^{n_1-1} e_2^{n_2-3} \prod_{\nu=1}^{3} ( -x_{\nu} e_1 +(-x_{\nu}y+1) e_2) \,,
      \]
      where $(e_1,e_2)$ is a basis  with $\ord (e_1)= n_1$, $\ord(e_2)= n_2$, $y \in [0, n_2-1]$, and $x_1, x_2, x_3 \in [1, n_1-1]$ with $x_1+x_2+x_3 \equiv n_1-1 \mod n_1$ (if $y\ne 0$, then $x_1+x_2+x_3 = n_1-1$).

\smallskip
\item There is at most one element $g \in G$ with $\mathsf v_g (U) \ge n_2-3$. In particular, if $\mathsf h (U) \ge n_2-3$, then there is precisely one element $g \in G$ with $\mathsf v_g (U) = \mathsf h (U)$.
\end{enumerate}
\end{corollary}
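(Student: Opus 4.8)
The plan is to derive everything from the explicit classification of maximal-length minimal zero-sum sequences in Lemma \ref{structure}, noting throughout that the hypotheses force $n_2 \ge 2n_1 \ge 12$. Statement 4 is essentially independent, and I would settle it first by a length count: if distinct $g \ne g'$ had $\mathsf v_g(U), \mathsf v_{g'}(U) \ge n_2-3$, then $g^{n_2-3}(g')^{n_2-3} \t U$ would give $2(n_2-3) \le |U| = n_1+n_2-1$, hence $n_2 \le n_1+5 \le n_2/2+5$ and $n_2 \le 10$, contradicting $n_2 \ge 12$. Thus at most one element reaches multiplicity $\ge n_2-3$, and when $\mathsf h(U)\ge n_2-3$ it is the unique element realizing the height.

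In the \emph{type II} case one reads off that $e_2$ has multiplicity $n_2 - sn_1 + \epsilon$, that $e_1 + ye_2$ has multiplicity $sn_1 - 1$, and that the remaining $n_1-\epsilon$ factors are shorter still; since $s \le n_2/n_1 - 1$ we have $sn_1 - 1 \le n_2 - n_1 - 1 < n_2 - 3$ and $n_1 - \epsilon \le n_1 - 1 < n_2 - 3$. Hence whenever $\mathsf h(U) \ge n_2-3$ the height is attained by $e_2$, so $n_2 - sn_1 + \epsilon \ge n_2-3$; together with $\epsilon \le n_1 - 1$ this forces $s = 1$ and $\epsilon = n_1 - c$ with $c = n_2 - \mathsf h(U) \in \{1,2,3\}$. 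For $c \in \{2,3\}$, substituting $s=1$ and $\epsilon = n_1-c$ into the type II formula yields verbatim the forms of statements 2 and 3, now with a nonzero $y$ and with $c$ product factors whose exponents lie in $[1,n_1-1]$ and sum to $n_1-1$ by condition (c); for $c=2$ one swaps the two factors if needed so that $x \in [1,(n_1-1)/2]$. For $c = 1$ the type II expression has a single product factor of exponent $n_1-1$, and a short computation using $-(n_1-1)\equiv 1 \pmod{n_1}$ shows it collapses to the type I form of statement 1.

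The \emph{type I} case splits according to the order of the distinguished element $e_j$. If $\ord(e_j) = n_2$, then $e_j$ occurs $n_2-1$ times and every other factor occurs at most $n_1 < n_2-3$ times, so $\mathsf h(U) = n_2-1$ and $U$ is literally the form in statement 1. The substantive case — the step I expect to be the main obstacle — is $\ord(e_j) = n_1$, where $U = e_j^{n_1-1}\prod_{\nu=1}^{n_2}(x_\nu e_j + e_i)$ with $\ord(e_i) = n_2$, and the height $m = \mathsf h(U) \ge n_2-3$ is realized by the most frequent value $g = x_0 e_j + e_i$ (which has order $n_2$ since its $e_i$-component equals $1$). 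Here I would change basis to $(E_1, E_2) = (e_j,\, x_0 e_j + e_i)$, rewrite each factor as $x_\nu e_j + e_i = (x_\nu - x_0)E_1 + E_2$, and track the congruence: from $\sum_\nu x_\nu \equiv 1 \pmod{n_1}$ and $n_1 \t n_2$ one finds that the $c = n_2 - m$ coefficients $x_\nu - x_0$ with $x_\nu \ne x_0$ sum to $1 \bmod n_1$. This is exactly the $y = 0$ instance of statements 1--3: for $m = n_2-1$ the single surviving coefficient is $\equiv 1$, giving $U = E_1^{n_1-1}E_2^{n_2-1}(E_1+E_2)$ of type I; for $m = n_2-2$ and $m = n_2-3$ one obtains the shapes of statements 2 and 3 with $y=0$, which is precisely why statement 3 asserts only the congruence $x_1+x_2+x_3 \equiv n_1-1 \pmod{n_1}$ when $y=0$, the sharper equality holding only in the type II branch.

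The points requiring care are to check that the surviving coefficients are nonzero modulo $n_1$ (so the exponents genuinely lie in $[1,n_1-1]$), to verify that the $y=0$ forms arising from type I and the $y\ne0$ forms arising from type II together exhaust all sequences of the prescribed height, and to confirm that degenerate coincidences (two surviving factors equal, or a surviving factor equal to $e_1+ye_2$) remain within the stated ranges. Each is routine, but the congruence bookkeeping in the change of basis, matched carefully against conditions (b)--(c) of Lemma \ref{structure}, is where the genuine content sits.
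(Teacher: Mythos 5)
Your proposal is correct and follows essentially the same route as the paper: both derive all four parts from the classification in Lemma \ref{structure}, settling part 4 by the same length count, handling type II by forcing $s=1$ and $\epsilon\in\{n_1-3,n_1-2,n_1-1\}$, and handling type I with $\ord(e_j)=n_1$ by rebasing to the high-multiplicity element (your $(E_1,E_2)$ is the paper's choice $e_1=f_1$, $e_2=x_1f_1+f_2$), which is exactly how the paper obtains the $y=0$ forms and why only the congruence survives in part 3. The congruence bookkeeping you single out as the substantive step is precisely what the paper carries out (tersely) in its proofs of parts 2 and 3.
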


\begin{proof}
We use all notation as in Lemma \ref{structure}.

\smallskip
1. If $U$ is of type II with respect to the basis $(e_1, e_2)$, then as observed above $s=1$, $\epsilon=n_1-1$, and
\[
U = (e_1+ye_2)^{n_1-1}e_2^{n_2-1} \big(e_1 + ((-n_1+1)y+1)e_2 \big) \,,
\]
which shows that $U$ is also of type I with respect to the basis $(e_1, e_2)$. If $U$ is of type I with respect to the basis $(e_2,e_1)$ then $\mathsf h(U)= n_2 - 1$ implies that $U$ is also of type I with respect to the basis $(e_1, e_2)$.

\smallskip
2. Suppose that $U$ is of type I with respect to the basis $(f_2, f_1)$. Then $U$ has the form
\[
U= f_1^{n_1-1} (x_1f_1+f_2)^{n_2-2} (x_2f_1+f_2)(x_3f_1+f_2) \,.
\]
Thus $U$ has the asserted form with $y=0$, $e_1=f_1$, and with $e_2=x_1f_1+f_2$.
In this case we only have two summands the congruence condition modulo $n_2$, and hence we obtain an equality in the integers.
Suppose that $U$ is of type II with respect to the basis $(e_1,e_2)$. Then $s=1$, $\epsilon = n_1-2$, and thus the assertion follows.

\smallskip
3. Suppose that $U$ is of type I with respect to the basis $(f_1, f_2)$. Then $U$ has the form
\[
U= f_1^{n_1-1} (x_1f_1+f_2)^{n_2-3} (x_2f_1+f_2)(x_3f_1+f_2) (x_4f_1+f_2) \,.
\]
Thus $U$ has the asserted form with $y=0$, $e_1=f_1$, and with $e_2=x_1f_1+f_2$.
Suppose that $U$ is of type II with respect to the basis $(e_1, e_2)$. Then $s=1$, $\epsilon = n_1-3$, and thus the assertion follows.

\smallskip
4. Assume to the contrary that there are two distinct elements $g_1, g_2 \in G$ with $\mathsf v_{g_1} (U) \ge n_2-3$ and  $\mathsf v_{g_2} (U) \ge n_2-3$. Then
\[
(n_2-3) + (n_2-3) \le \mathsf v_{g_1} (U) + \mathsf v_{g_2} (U) \le |U| = n_1+n_2-1 \,,
\]
which implies that $n_2 \le n_1+5$. Hence $2n_1 \le n_2 \le n_1+5$ and $n_1 \le 5$, a contradiction.
\end{proof}

We recall a technique frequently used in \cite{Ge-Gr-Sc11a} and then provide   a  minor modification of \cite[Lemma 5.3]{Ge-Gr-Sc11a}.

\begin{lemma} \label{5.4}
Let $G$ be a finite abelian group and let $S \in \mathcal F (G)$ be a  zero-sum free sequence. \newline If \ $\prod_{g \in \supp(S)}( 1 + \mathsf v_g (S) ) > |G|$, then there is an $A \in \mathcal A (G)$ with $|A|\ge 3$ such that $(-A) A \mid (-S)S$.
\end{lemma}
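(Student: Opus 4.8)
The plan is to use a pigeonhole/Davenport-style counting argument on the subsequence sums of $S$. Concretely, I would consider the set $\Sigma(S) \cup \{0\}$ of all subsequence sums of $S$ together with the empty sum, and show that if the product $\prod_{g \in \supp(S)}(1+\mathsf v_g(S))$ exceeds $|G|$, then two distinct subsequences of $S$ must have the same sum. The number of distinct subsequences of $S$ (as divisors in $\mathcal F(G)$) is exactly $\prod_{g \in \supp(S)}(1+\mathsf v_g(S))$, since a subsequence is determined by choosing, for each $g \in \supp(S)$, a multiplicity in $[0,\mathsf v_g(S)]$. The sum map sends each such subsequence to an element of $G$, so if this count strictly exceeds $|G|$, the pigeonhole principle forces two distinct subsequences $T_1 \ne T_2$ with $\sigma(T_1) = \sigma(T_2)$.

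Next I would pass to a ``reduced'' pair by cancelling the common part: writing $d = \gcd(T_1,T_2)$ and $T_i = d\,T_i'$, I obtain coprime subsequences $T_1', T_2'$ (sharing no element with positive multiplicity in both) that still satisfy $\sigma(T_1') = \sigma(T_2')$. At least one of $T_1',T_2'$ is nonempty; in fact both are nonempty because $S$ is zero-sum free (if one were empty the other would be a nonempty zero-sum subsequence, contradicting zero-sum freeness). Set $g^* = \sigma(T_1') = \sigma(T_2')$. The key object is then $A := T_1' \cdot (-\sigma(T_1')) = T_1'\,(-g^*)$; since $T_1'$ is zero-sum free (being a subsequence of the zero-sum free $S$) and has sum $g^*$, the sequence $A$ is a minimal zero-sum sequence, i.e.\ $A \in \mathcal A(G)$. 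Symmetrically $-A = (-T_2')\,g^*$ can be built from $T_2'$, because $-T_2'$ is a subsequence of $-S$ with sum $-g^*$, so $(-T_2')\,g^* \in \mathcal A(G)$ and equals $-A$ up to the atom structure. The divisibility $(-A)A \mid (-S)S$ then follows since $A = T_1'(-g^*)$ divides $S \cdot (-S)$ (the factor $T_1'$ coming from $S$ and $-g^*$ from $-S$) and $-A$ divides it using the coprimality of $T_1'$ and $T_2'$ to avoid double-booking any element.

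The length bound $|A| \ge 3$ requires a separate short argument and is where I expect the main obstacle to lie. Since $A = T_1'(-g^*)$ we have $|A| = |T_1'| + 1$, so I need $|T_1'| \ge 2$, and symmetrically $|T_2'| \ge 2$. The potential trouble is the case $|T_1'| = 1$: then $T_1' = g^*$ is a single element, and $\sigma(T_2') = g^*$ with $T_2'$ coprime to $T_1'$ forces $T_2'$ to be a nonempty subsequence not containing $g^*$ whose sum is $g^*$; combined with $T_1' = g^*$ this would make $A = g^*(-g^*)$ of length $2$. I would rule this out by choosing the colliding pair $T_1,T_2$ more carefully, or by noting that among the many collisions guaranteed by a counting margin one can always select one with both reduced parts of length at least $2$. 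A cleaner route is to track the count more quantitatively: the strict inequality gives at least two coincident sum-values beyond the trivial ones, and by symmetrizing and passing to $(-S)S$ one arranges $|A|\ge 3$. I would carry out the counting so that enough slack remains to avoid the degenerate length-$2$ atom, which is the only genuinely delicate point; everything else is the standard translation between equal-sum subsequences and minimal zero-sum sequences.
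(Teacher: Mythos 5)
Your counting step and the reduction to coprime subsequences $T_1', T_2'$ with $\sigma(T_1')=\sigma(T_2')$ coincide with the paper's argument, but the construction of the atom is where your proof breaks. You set $A = T_1'\,(-g^*)$ with $g^* = \sigma(T_1')$. There is no reason that $-g^*$ is available inside $-S$: the element $g^*$ is a subsequence \emph{sum} of $S$ and need not lie in $\supp(S)$ at all, so in general $A$ does not even divide $(-S)S$, let alone $(-A)A$. The subsequent claim that $-A = (-T_2')\,g^*$ is also false: $-A = (-T_1')\,g^*$, and $(-T_2')\,g^*$ is a different sequence, so the appeal to coprimality of $T_1'$ and $T_2'$ to ``avoid double-booking'' does not apply to the object you actually defined. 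Finally, your treatment of the bound $|A|\ge 3$ is not a proof: the hypothesis only guarantees that the number of subsequences is at least $|G|+1$, i.e.\ a single collision, so there is no counting margin from which to ``select a better pair,'' and you give no concrete mechanism for excluding $|T_1'|=1$.

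Both difficulties vanish with a different choice of $A$, which is what the paper does. The right object to factor is the zero-sum sequence $(-T_1')\,T_2'$, which divides $(-S)S$ by construction. Take any atom $A$ dividing it. If $|A|=2$, then $A=(-g)g$ for some $g$; since $S$ is zero-sum free, neither $T_1'$ nor $T_2'$ can contain both $g$ and $-g$, so after renumbering $-g$ comes from $-T_1'$ and $g$ from $T_2'$, whence $g$ divides both $T_1'$ and $T_2'$, contradicting $\gcd(T_1',T_2')=1$. Hence $|A|\ge 3$ automatically, and since a minimal zero-sum sequence of length at least $3$ contains at most one of $g$, $-g$ for each $g\in G$, one checks $\mathsf v_g\big((-A)A\big)\le \mathsf v_g\big((-S)S\big)$ for every $g$, which gives $(-A)A \mid (-S)S$. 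In short: the atom must be extracted from $(-T_1')T_2'$, not manufactured by appending $-\sigma(T_1')$ to $T_1'$.
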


\begin{proof}
We observe that
\[
|\{T \in \mathcal F (G) \mid  T \ \text{is a subsequence of} \  S\}|= \prod_{g \in \supp(S)}( 1 + \mathsf v_g (S) ) \,.
\]
Thus, if $\prod_{g \in \supp(S)}( 1 + \mathsf v_g (S) ) > |G|$, then there exist distinct sequences $T_1', T_2' \in \mathcal F (G)$ such that  $T_1' \mid S$, $T_2' \mid S$, and $\sigma (T_1')= \sigma (T _2')$. We set $T_1' = TT_1$ and $T_2' = T T_2$ where  $T = \gcd(T_1',T_2')$ and $T_1, T_2 \in \mathcal F (G)$. Then $\sigma (T_1)=\sigma (T_2)$ and
$(-T_1)T_2$ is a zero-sum subsequence of $(-S)S$.
Let $A \in \mathcal A (G)$ with $A \mid (-T_1)T_2$. Assume to the contrary that  $|A|=2$. Then $A=(-g)g$ for some $g \in G$. Since $S$ is zero-sum free, we infer (after renumbering if necessary) that $(-g)\mid (-T_1)$ and $g\mid T_2$, a contradiction to $\gcd(T_1,T_2)=1$.
Therefore we obtain that $|A|\ge 3$, which implies  that $|\gcd(A,(-g)g)| \le 1$ for each $g \in G$,  and thus $(-A)A \mid (-S)S$.
\end{proof}

\begin{lemma} \label{5.5}
Let $t \in \N$ and $\alpha, \alpha_1, \ldots, \alpha_t , \alpha_1', \ldots, \alpha_t' \in \R$ with
$\alpha_1 \ge \ldots \ge \alpha_t \ge 0$, $\alpha_1' \ge \ldots \ge \alpha_t' \ge 0$, $\alpha_i' \le \alpha_i$ for each $i \in [1,t]$, and $\sum_{i=1}^{t}\alpha_i\geq \alpha \geq \sum_{i=1}^{t}\alpha_i'$. Then
\[
\prod_{\nu=1}^t (1 + x_{\nu})  \qquad \text{is minimal}
\]
over all $(x_1,\ldots,x_t) \in \R^t$ with $\alpha_i' \le x_i \le \alpha_i$ for each $i \in [1,t]$
and $\sum_{i=1}^t x_i = \alpha$, if
\[
x_i = \alpha_i \text{ for each } i \in [1,s] \quad \text{and} \quad x_i = \alpha_i' \quad \text{ for each } i \in [s+2,t]
\]
where $s \in [0, t]$ is maximal with $\sum_{i=1}^s \alpha_i \le \alpha$.
\end{lemma}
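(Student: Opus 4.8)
The plan is to recast the assertion as the minimisation of a Schur-concave function over the polytope
\[
K = \Big\{ x \in \R^t \mid \alpha_i' \le x_i \le \alpha_i \ \text{ for } i \in [1,t], \ \textstyle\sum_{i=1}^t x_i = \alpha \Big\} \,,
\]
which is a box intersected with one hyperplane. The hypothesis $\sum_{i=1}^t \alpha_i' \le \alpha \le \sum_{i=1}^t \alpha_i$ guarantees, by moving continuously from the all-lower corner $(\alpha_i')_i$ to the all-upper corner $(\alpha_i)_i$ and applying the intermediate value theorem to the coordinate sum, that $K \ne \emptyset$; as $K$ is compact, $\phi(x) = \prod_{\nu=1}^t (1+x_\nu)$ attains a minimum on $K$. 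Let $x^* \in \R^t$ be the candidate point, i.e. $x_i^* = \alpha_i$ for $i \in [1,s]$, $x_i^* = \alpha_i'$ for $i \in [s+2,t]$, and $x_{s+1}^* = \alpha - \sum_{i=1}^s \alpha_i - \sum_{i=s+2}^t \alpha_i'$. I would first record feasibility of $x^*$: the maximality of $s$ gives $\sum_{i=1}^{s+1}\alpha_i > \alpha$, hence $x_{s+1}^* \le \alpha - \sum_{i=1}^s \alpha_i < \alpha_{s+1}$, while the reverse bound $x_{s+1}^* \ge \alpha_{s+1}'$ is exactly where the hypotheses relating $\alpha$ to the partial sums of the $\alpha_i'$ must be invoked (see the obstacle below).

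The conceptual core is that $\phi$ is symmetric and \emph{Schur-concave} on $\R_{\geq 0}^t$. Indeed, for $i \ne j$ one computes
\[
(x_i - x_j)\Big( \tfrac{\partial \phi}{\partial x_i} - \tfrac{\partial \phi}{\partial x_j}\Big) = - \Big( \prod_{k \ne i,j}(1+x_k)\Big)(x_i - x_j)^2 \le 0 \,,
\]
since every factor $1+x_k \ge 1 > 0$; so the Schur--Ostrowski criterion applies. Consequently, once one knows that the feasible point $x^*$ \emph{majorises} every $y \in K$, Schur-concavity yields $\phi(x^*) \le \phi(y)$ for all $y \in K$, so that $x^*$ is the sought minimiser. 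This reduces the entire lemma to a purely combinatorial majorisation statement and removes any need for a Lagrange-multiplier or direct perturbation analysis.

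It thus remains to show $x^* \succ y$ for all $y \in K$. First I would check that $x^*$ is already sorted non-increasingly: the block $i \le s$ decreases because the $\alpha_i$ do, the block $i \ge s+2$ decreases because the $\alpha_i'$ do, and $x_s^* = \alpha_s \ge \alpha_{s+1} \ge x_{s+1}^*$ together with $x_{s+1}^* \ge \alpha_{s+1}' \ge \alpha_{s+2}' = x_{s+2}^*$ splices the blocks. As $\sum_i x_i^* = \sum_i y_i = \alpha$, majorisation means $\sum_{i=1}^k x_i^* \ge \sum_{i \in I} y_i$ for every $k$ and every $I \subset [1,t]$ with $|I|=k$. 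For $k \le s$ this is immediate from $y_i \le \alpha_i$ and $\alpha_1 \ge \ldots \ge \alpha_t$, giving $\sum_{i \in I} y_i \le \sum_{i \in I}\alpha_i \le \sum_{i=1}^k \alpha_i = \sum_{i=1}^k x_i^*$. For $k \ge s+1$ I would pass to complements: $\sum_{i \in I} y_i = \alpha - \sum_{i \notin I} y_i \le \alpha - \sum_{i \notin I}\alpha_i'$, and since $|I^c| = t-k$ while $\alpha_1' \ge \ldots \ge \alpha_t'$, the indices $\{k+1,\ldots,t\}$ carry the $t-k$ smallest lower bounds, so $\sum_{i \notin I}\alpha_i' \ge \sum_{i=k+1}^t \alpha_i' = \alpha - \sum_{i=1}^k x_i^*$; combining yields the claim.

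The main obstacle is not the majorisation computation, which is routine once the two monotonicity hypotheses are deployed in the two complementary regimes $k \le s$ and $k \ge s+1$, but rather the verification that $x^*$ genuinely lies in $K$: the greedy prescription can overshoot at the pivot coordinate $s+1$, and only the precise interplay of $\alpha$ with the partial sums $\sum \alpha_i$ and $\sum \alpha_i'$ rules this out, while boundary cases such as $s=t$ (where $\alpha = \sum_i \alpha_i$ is forced and $x^* = (\alpha_1,\ldots,\alpha_t)$) or $s \in \{0,t-1\}$ (where a block is empty) should be dispatched separately. As an elementary alternative to Schur-concavity, one may note that $\log \phi = \sum_\nu \log(1+x_\nu)$ is concave, so its minimum over $K$ — hence that of $\phi$ — is attained at a vertex; the single equality constraint leaves a polytope of dimension $t-1$, so every vertex has at most one coordinate strictly between its bounds, and a sum-preserving exchange $x_i \mapsto x_i + \delta$, $x_j \mapsto x_j - \delta$ combined with the orderings $\alpha_1 \ge \ldots \ge \alpha_t$ and $\alpha_1' \ge \ldots \ge \alpha_t'$ drives any such vertex toward $x^*$ without increasing $\phi$.
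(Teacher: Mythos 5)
Your route is genuinely different from the paper's and, in its main thrust, correct. The paper works directly at a minimizer: it first sorts the coordinates by an exchange argument, then uses the identity $(1+x+\delta)(1+y-\delta)=(1+x)(1+y)-\delta(x-y)-\delta^2$ twice, once to force all but one coordinate to a bound and once to exclude the configuration $m_i<\alpha_i$, $m_j>\alpha_j'$ with $i<j$. You replace all of this perturbation by the single assertion $x^*\succ y$ for every feasible $y$, and your verification of that majorisation is complete and correct (the regime $k\le s$ via the upper bounds, the regime $k\ge s+1$ via complements and the lower bounds, using that $x^*$ is sorted), as is the Schur--Ostrowski computation; your closing "elementary alternative" is essentially the paper's actual proof. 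What the paper's version buys is that it never needs to guess the minimizer, only to derive its shape; what yours buys is a one-step reduction once the candidate is named, together with the stronger by-product that $x^*$ majorises the entire feasible polytope.

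The one step you defer, namely $x^*_{s+1}\ge\alpha_{s+1}'$, cannot in fact be extracted from the stated hypotheses, so this is a defect of the lemma as formulated rather than a verification you merely omitted. Take $t=3$, $\alpha_1=\alpha_2=\alpha_3=10$, $\alpha_1'=\alpha_2'=\alpha_3'=5$, $\alpha=16$: all hypotheses hold, $s=1$, and $x^*=(10,1,5)$ violates $x_2\ge 5$, while the true minimizer is $(6,5,5)$ with product $252$ (against $132$ at the infeasible $x^*$). The paper's proof quietly has the same issue: it shows that the minimizer has the block form for \emph{some} cut-off, but never checks that this cut-off is the $s$ named in the statement, nor that the named point is feasible. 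In all of the paper's applications the $\alpha_i'$ are $0$ or $1$ and the prescribed point is readily seen to lie in $K$, at which point your argument closes completely. So treat the feasibility of $x^*$ as an additional hypothesis (or verify it in each application); with that caveat made explicit, your proof is sound and, to my mind, cleaner than the original.
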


\begin{proof}
Since continuous functions attain minima on compact sets, the above function has a minimum at some point $(m_1, \dots, m_t)\in \R^t$. Suppose there are $i, j \in [1,t]$ such that $i < j$  and $m_i< m_j$. Then  $ \alpha_j' \le \alpha_i' \le m_i < m_j \le \alpha_j  \le \alpha_i$, and thus we can exchange  $m_i$ and $m_j$.
Therefore, after renumbering if necessary, we may suppose that $m_1 \ge \dots \ge m_t$.
Since for $x\ge y \ge  0$ and $\delta > 0 $ we have
\[
(1+x+\delta)(1+y-\delta) =  (1+x)(1+y) - \delta (x-y) - \delta^2 < (1+x)(1+y) \,,
\]
it follows that all but at most one of the $m_i$ is equal to $\alpha_i$ or $\alpha_i'$.  It remains to show that there is an $s \in [1,t]$ such that $m_i = \alpha_i$ for $i \in [1,s]$ and $m_i = \alpha_i'$ for each $i \in [s+2,t]$. Assume to the contrary that this is not the case. Then there are $i, j \in [1,t]$ with
$ i < j$ such that $m_i < \alpha_i$ and $\alpha_j' < m_j$.  Using again the just mentioned inequality and that $m_i \ge m_j$,  we obtain a contradiction to the minimum being attained at $(m_1, \dots, m_t)$.
\end{proof}

\begin{proof}[Proof of Proposition \ref{5.1}]
Assume to the contrary that there is an $A \in \mathcal B (G)$ such that $\mathsf L (A) = \{2, n_2, n_1+n_2-2\}$. Then there are $U, V \in \mathcal A (G)$ with $|U| \ge |V|$ such that $A = UV$. We set
\[
U = SU' \quad \text{and} \quad V = (-S)V' \,, \ \text{where $U', V' \in \mathcal F (G)$, and $S = \gcd (U, -V)$}.
\]
Since $2(n_1+n_2-2) \le |A|=|U|+|V| \le 2 \mathsf D (G) = 2(n_1+n_2-1)$, there are the following three cases:
\begin{itemize}
\item[(I)] $|A|= 2(n_1+n_2-2)$. Then, a factorization of $A$ of length $n_1 +n_2 -2$ must contain only minimal zero-sum sequences of length $2$ and thus  $U'=V'=1$.

\item [(II)] $|A|=2(n_1+n_2-2)+1$. Then, a factorization of $A$ of length $n_1 +n_2 -2$ must contain one minimal zero-sum  of length $3$ and otherwise only minimal zero-sum sequences of length $2$, thus $U' = g_1 g_2$ and  $V' = (-g_1-g_2)$ for some $g_1, g_2 \in G$.

\item[(III)] $|A|= 2(n_1+n_2-1)$. Then a factorization of $A$ of length $n_1 +n_2 -2$ must contain either one minimal zero-sum subsequence of length $4$ and otherwise  minimal zero-sum sequences of length $2$, or two minimal zero-sum sequences of length $3$ and otherwise only minimal zero-sum sequences of length $2$. Thus,  there are the following two subcases.
             \begin{itemize}
             \item $U'=g_1g_2$, $V'=h_1h_2$ where $g_1,g_2, h_1,h_2 \in G$ such that $g_1g_2h_1h_2 \in \mathcal A (G)$.

             \item $U'=g_1g_2(-h_1-h_2)$ and $V'=h_1h_2(-g_1-g_2)$ where $g_1,g_2, h_1,h_2 \in G$.

             \end{itemize}
\end{itemize}

\smallskip
We start with the following two assertions.

\begin{enumerate}
\item[{\bf A1.}\,] Let $W \in \mathcal A (G)$ with $|W| < |U|$ and $W \t (-S)S$. Then $|W| \in \{2, n_1\}$.

\smallskip
\item[{\bf A2.}\,] Let $W_1, W_2 \in \mathcal A (G)$ such that $W_1(-W_1)W_2 (-W_2) \t S(-S)$. Then $\{|W_1|, |W_2|\} \ne \{n_1\}$.
\end{enumerate}

{\it Proof of \,{\bf A1.}}
Suppose $|W|>2$. Then $(-W)W \t (-S)S$ and we set $(-S)S=(-W)WT(-T)$ with $T \in \mathcal F (G)$ and obtain that
\[
UV = (-W)WT(-T)(U'V') \,.
\]
Let $z$ be a factorization of $U'V'$. Then $|z| \in [0,2]$. If $T=1$, then $UV$ has a factorization of length $2+|z| \in \{2, n_2, n_1+n_2-2\}$ which implies $|z|=0$ and hence $|W|=|U|$, a contradiction. Thus $T \ne 1$. Since $T(-T)$ has a factorization of length $|T|=|S|-|W|$, the above decomposition gives rise to a factorization of $UV$ of length $t$ where
\[
3 \le t = 2+|T|+|z|=2+|S|-|W|+|z| \in \{n_2, n_1+n_2-2\} \,.
\]
We distinguish four cases.

Suppose that $U'=V'=1$. Then $z=1$, $|z|=0$, and $|S|=|U|=n_1+n_2-2$. Thus $t=n_1+n_2-|W| \in \{n_2, n_1+n_2-2\}$, and the assertion follows.

Suppose that $U' = g_1 g_2$ and  $V' = (-g_1-g_2)$ for some $g_1, g_2 \in S$. Then $|z|=1$ and $|S|=n_1+n_2-3$. Thus $t=n_1+n_2-|W| \in \{n_2, n_1+n_2-2\}$, and the assertion follows.

Suppose that $U'=g_1g_2$, $V'=h_1h_2$ where $g_1,g_2,h_1,h_2 \in G$ such that $g_1g_2h_1h_2 \in \mathcal A (G)$. Then $|z|=1$ and $|S|=n_1+n_2-3$. Thus $t=n_1+n_2-|W| \in \{n_2, n_1+n_2-2\}$, and the assertion follows.

Suppose that $U'=g_1g_2(-h_1-h_2)$ and $V'=h_1h_2(-g_1-g_2)$ where $g_1,g_2, h_1,h_2 \in G$. Then $|z|=2$ and $|S|=n_1+n_2-4$. Thus $t=n_1+n_2-|W| \in \{n_2, n_1+n_2-2\}$, and the assertion follows. \qed ({\bf A1})

\smallskip
{\it Proof of \,{\bf A2}}.\, Assume to the contrary that $|W_1|=|W_2|=n_1$. Then there are $W_5, \ldots, W_k \in \mathcal A (G)$ such that
\[
UV = W_1(-W_1)W_2(-W_2)W_5 \cdot \ldots \cdot W_k \,,
\]
where $k \in \mathsf L (UV) = \{2, n_2, n_1+n_2-2\}$ and hence $k=n_2$. Since
\[
|W_5 \cdot \ldots \cdot W_k| = |UV|-4n_1 \le 2(n_1+n_2-1)-4n_1= 2(n_2-n_1-1) \,,
\]
it follows that
\[
k-4\le \max \mathsf L ( W_5 \cdot \ldots \cdot W_k ) \le |W_5 \cdot \ldots \cdot W_k|/2 \le n_2-n_1-1 < n_2-4 \,,
\]
a contradiction. \qed ({\bf A2})

\smallskip
We distinguish two cases depending on the size of $\mathsf{h}(S)$.

\bigskip
\noindent
CASE 1: \  $\mathsf{h}(S) \ge n_2/2$.

We set $S = g^vS'$ where $g \in G$, $v = \mathsf h (S)$, and $S' \in \mathcal F (G)$. Then
\[
U_1 = (-g)^{n_2-v}S'U' \in \mathcal B (G), \ V_1= g^{n_2-v} (-S') V' \in \mathcal B (G) \,.
\]
Clearly, we have
\[
(U')^{-1}U_1 = (-g)^{n_2-v} S' = - \big( (V')^{-1}V_1 \big) \,.
\]
We will often use that if some $W \in \mathcal A (G)$ divides $(U')^{-1}U_1$, then $(-W)$ divides $(V')^{-1}V_1$ and hence $(-W)W \t (-S)S$.
Now we choose  factorizations $x_1 \in \mathsf Z (U_1)$ and  $y_1 \in \mathsf Z (V_1)$. Note that $|x_1| \le n_2-v$ and $|y_1| \le n_2-v$ as each minimal zero-sum sequence in $x_1$ and $y_1$ contains $(-g)$ and $g$, respectively. Then $UV = U_1 V_1 \big( (-g)g \big)^{2v-n_2}$
has a factorization of length $t$ where
\[
2+(2v-n_2) \le t = |x_1|+|y_1|+(2v-n_2) \le 2(n_2-v)+(2v-n_2)=n_2 \,.
\]
Assume to the contrary that $t=2$. Then $v=n_2/2$ and both, $U= g^{n_2/2}S'U'$ and $U'= (-g)^{n_2/2}S'U'$, are minimal zero-sum sequences, a contradiction, as $SU' \notin \mathcal{A} \big(  \mathcal B_{\langle g \rangle} (G) \big)$ as its length is greater than $n_1 = \mathsf D_{\langle g \rangle} (G) = \mathsf D (G/\langle g \rangle)$. Thus $t=n_2$, $|x_1|=|y_1|=n_2-v$, and hence $\mathsf L (U_1)=\mathsf L (V_1) = \{n_2-v\}$. If $W \in \mathcal A (G)$ with $|W|=2$ and $W \t U_1$, then $W=(-g)g$. Similarly, if $W' \in \mathcal A (G)$ with $|W'|=2$ and $W' \t V_1$, then $W=(-g)g$. By definition of $v$, not both $U_1$ and $V_1$ are divisible by an atom of length $2$.  Now  we distinguish four cases depending on the form of $U'$ and $V'$, which we determined above.

\bigskip
\noindent
CASE 1.1: \ $U'=V'=1$.

Then $V_1=-U_1$, say $V_1 = W_1 \cdot \ldots \cdot W_{n_2-v}$. Since none of the $W_i$ has length $2$, it follows that $|W_1|= \ldots = |W_{n_2-v}|=n_1$ and hence
\[
2(n_2+n_1-2)=2|V|= 2(2v-n_2) + 2|V_1|= 2(2v-n_2)+ 2n_1(n_2-v) \,,
\]
which implies that $v=n_2-1$.
Consequently $|S| = n_1-1$ and $S \in \mathcal{A} \big(  \mathcal B_{\langle g \rangle} (G) \big)$. This implies that  (use an elementary direct argument or \cite[Theorem 5.1.8]{Ge09a}),
\[
S  = (2e_1 + a_1 e_2)\prod_{\nu=2}^{n_1 -1}(e_1 + a_{\nu} e_2) \,,
\]
where $(e_1,e_2)$ is a basis of $G$.
Let $r \in [0, n_2-1]$ such that   $r \equiv -a_1 +a_2 + a_3 \mod n_2$. Then
\[
W_1= (2e_1 + a_1 e_2)(-e_1 - a_2 e_2)(-e_1 - a_3 e_2)e_2^r \quad \text{and} \quad W_2=
(2e_1 + a_1 e_2)(-e_1 - a_2 e_2)(-e_1 - a_3 e_2)(-e_2)^{n_2 - r}
\]
are minimal zero-sum sequences dividing $(-V)V$. Since $|W_1|=3+r$, $|W_2|=3+n_2-r$, and $|W_1W_2|=n_2+6>2n_1$, at least one of them does not have length $n_1$, a contradiction.

\bigskip
\noindent
CASE 1.2: \ $U' = g_1 g_2$ and  $V' = (-g_1-g_2)$ for some $g_1, g_2 \in G$.

We set
\[
x_1=X_1 \cdot \ldots \cdot X_{n_2-v} \quad \text{and} \quad y_1 = Y_1 \cdot \ldots \cdot Y_{n_2-v}
\]
where all $X_i, Y_j \in \mathcal A (G)$, $g_1g_2 \t X_1X_2$ (or even $g_1g_2 \t X_1$), and $(-g_1-g_2)\t Y_1$. We distinguish three subcases.

\smallskip
\noindent
CASE 1.2.1: \ $v=n_2-1$.

By Corollary \ref{5.3}, with all notations as introduced there, we get $U  = e_2^{n_2-1} \prod_{\nu=1}^{n_1} (e_1+ x_{\nu} e_2)$.
Thus $g=e_2$ and  $U' \t \prod_{\nu=1}^{n_1} (e_1+ x_{\nu} e_2)$, whence after renumbering if necessary we have $g_i=x_ie_1+e_2$ for each $i \in [1,2]$. Therefore we have
\[
V = \big(-2e_1 -(x_1+x_2)e_2 \big) (-e_2)^{n_2-1} \prod_{\nu=3}^{n_1} (-e_1-x_{\nu}e_2)  = (-g_1-g_2)(- S) \,.
\]
Assume to the contrary that there are $i, j \in [3, n_1]$ distinct with $x_i \ne x_j$. If $q \in [1, n_2-1]$ with $q \equiv -(x_i-x_j) \mod n_2$, then
\[
W_1' = (e_1+x_ie_2)(-e_1-x_je_2)e_2^q \quad \text{and} \quad W_2' = (e_1+x_ie_2)(-e_1-x_je_2)(-e_2)^{n_2-q}
\]
are atoms dividing $(-S)S$, both have length greater than two but  not both have length $n_1$, a contradiction to {\bf A1}. Therefore we have $x_3= \ldots = x_{n_1}$.
Since $x_1+ \ldots + x_{n_1} \equiv 1 \mod n_2$, it follows that $x_1 \ne x_3$ or $x_2 \ne x_3$, say $x_2 \ne x_3$.
Therefore there is an $r \in [1, n_2-1]$ with $r \equiv x_2-x_3$ such that
\[
W_1= \big( -(x_1+x_2)e_2-2e_1 \big) (x_1e_2+e_1)(x_3e_2+e_1)e_2^{r} \in \mathcal A (G)
\]
and
\[
W_2 = (x_2e_2+e_1)(-x_3e_2-e_1)(-e_2)^{r} \in \mathcal A (G) \,.
\]
Thus it follows  that
\[
UV = W_1W_2 \prod_{\nu=4}^{n_1} \big( (x_{\nu} e_2+e_1)(-x_{\nu} e_2-e_1) \big) \big( (-e_2)e_2 \big)^{n_2-1-r}
\]
has a factorization of length
\[
2+(n_1-3)+(n_2-1-r) = n_1+n_2-2-r \in \{2, n_2, n_1+n_2-2\} \,,
\]
and hence $r= n_1-2$. Now we define
\[
W_1' = \big( -(x_1+x_2)e_2-2e_1 \big) (x_1e_2+e_1)(x_3e_2+e_1)(-e_2)^{n_2-r}
\]
and
\[
W_2' = (x_2e_2+e_1)(-x_3e_2-e_1) e_2^{n_2-r} \in \mathcal A (G) \,.
\]
Thus it follows  that
\[
UV = W_1'W_2' \prod_{\nu=4}^{n_1} \big( (x_{\nu}e_2+e_1)(-x_{\nu}e_2-e_1) \big) \big( (-e_2)e_2 \big)^{r-1}
\]
has a factorization of length
\[
2+(n_1-3)+(r-1) = n_1-2+r = 2n_1-4 \notin \{2, n_2, n_1+n_2-2\} \,, \ \text{a contradiction.}
\]

\noindent
CASE 1.2.2: \ $v=n_2-2$.

Since $V' \mid Y_1$ it follows that $Y_2 \t (-S)S$ and we thus may assume that $X_2  = -Y_2$. Furthermore, $|Y_2|$ cannot have length $2$, since $-g \nmid S'$. Hence {\bf A1} gives that  $|Y_2|$ has length $n_1$. It follows that $|Y_1|=  2$ and $|X_1|= 3$. This implies that $-g_1-g_2 = -g$ whence $\mathsf v_{-g}(V)=n_2-1$ and $\mathsf v_{g}(U)=n_2-2$.  By Corollary \ref{5.3}, with all notations as introduced there, we obtain that
\[
\begin{aligned}
U & = (e_1+ye_2)^{n_1-1} e_2^{n_2-2} \big(-xe_1+(-xy+1)e_2 \big) \ \big(-(n_1-1-x)e_1+(-(n_1-1-x)y+1)e_2 \big) \,.
\end{aligned}
\]
Since $g_1+g_2=g=e_2$, it follows that $x=1$ and that
\[
V =   (-(e_1+ye_2))^{ n_1 - 2} (-e_2)^{n_2 - 1} ( (n_1 -2) e_1 +((n_1 -2)y+1) e_2) \,.
\]
Then $W = (e_1+ye_2)^2  ( (n_1 -2) e_1 +((n_1 -2)y+1) e_2) (-e_2)^r$, where  $r \in [0, n_2 - 1]$ such that  $r \equiv n_1 y +1 \mod n_2$, is a minimal zero-sum sequence. Since $r \equiv 1 \mod n_1$, it follows that $r \in [1, n_2 -n_1 +1]$. Thus,  $W \t (-S)S$,
hence  $|W|= n_1$, and thus $r = n_1 -3$. We consider $W'= (e_1+ye_2)^2  ( (n_1 -2) e_1 +((n_1 -2)y+1) e_2) e_2^{n_2-(n_1 - 3)}$.
Again, $W' \t (-S)S$. Yet $|W'|= 3 + (n_2 - (n_1 - 3)) = n_2 - n_1 + 6$, a contradiction.

\smallskip
\noindent
CASE 1.2.3: \ $v \le n_2-3$.

Then $Y_2 Y_3 \t (V')^{-1}V_1$, and since $|Y_2|\ne2\ne|Y_3|$, we infer that $|Y_2|=|Y_3|=n_1$. Thus $(-Y_2)(-Y_3) \t (U')^{-1}U_1$ and $Y_2(-Y_2)Y_3(-Y_3) \t S(-S)$, a contradiction to {\bf A2}.

\bigskip
\noindent
CASE 1.3: \ $U'=g_1g_2$ and $V'=h_1h_2$, where $g_1,g_2,h_1,h_2 \in G$ such that $g_1g_2h_1h_2 \in \mathcal A (G)$.

We set
\[
x_1=X_1 \cdot \ldots \cdot X_{n_2-v} \quad \text{and} \quad y_1 = Y_1 \cdot \ldots \cdot Y_{n_2-v}
\]
where all $X_i, Y_j \in \mathcal A (G)$, $g_1g_2 \t X_1X_2$ (or even $g_1g_2 \t X_1$ ), and $h_1h_2 \t Y_1 Y_2$ (or even  $h_1h_2 \t Y_1$). We distinguish four cases.

\smallskip
\noindent
CASE 1.3.1: \ $v = n_2-1$.

By Corollary \ref{5.3},  we have
\[
U  = e_2^{n_2-1} \prod_{\nu=1}^{n_1} (e_1+ x_{\nu} e_2)  \ \text{and} \
-V  = e_2^{n_2-1} \prod_{\nu=1}^{n_1} (e_1' + x_{\nu}' e_2) \,,
\]
where $(e_1, e_2)$ and $(e_1', e_2)$ are both bases with $\ord(e_2)= n_2$ and $x_i, x_i' \in [0, n_2-1]$ for each $i \in [1, n_1]$. Since $|S|=|U|-2$, it follows that, after renumbering if necessary,  $\prod_{\nu=3}^{n_1} (e_1+x_{\nu}e_2) \t (-V)$ and hence, after a further renumbering if necessary, $e_1+x_ie_2=e_1'+x_i' e_2$ for each $i \in [3, n_1]$. Thus, if we write $-V$ with respect to the basis $(e_1,e_2)$, it still has the above structure. Therefore we may assume that $e_1=e_1'$ and $x_i=x_i'$ for each $i \in [3, n_1]$. Therefore
\[
g_i = e_1+x_ie_2 \quad \text{and} \quad h_i = -e_1-x_i'e_2 \quad \text{for each} \ i \in [1,2] \,.
\]
Since $g_1+g_2=-h_1-h_2$, it follows that $x_1+x_2 \equiv -x_1'-x_2' \mod n_2$ and hence $x_1-x_1' \equiv x_2'-x_2 \mod n_2$. Let $r \in [0, n_2-1$ such that $r \equiv x_1 - x_1' \mod n_2$. Then
\[
W_1 = g_1h_1 (-e_2)^r \,, \ W_1' = g_1h_1 e_2^{n_2-r} \,, \ W_2=g_2h_2e_2^r \,, \quad \text{and} \quad W_2' = g_2h_2(-e_2)^{n_2-r}
\]
are minimal zero-sum sequences which give rise to the  factorizations
\[
\begin{aligned}
UV & = W_1W_2 \big( (-e_2)e_2 \big)^{n_2-r-1} \prod_{\nu=3}^{n_1} \big( (e_1+x_{\nu}e_2)(-e_1-x_{\nu}e_2) \big) \\
 & = W_1' W_2' \big( (-e_2)e_2 \big)^{r-1} \prod_{\nu=3}^{n_1} \big( (e_1+x_{\nu}e_2)(-e_1-x_{\nu}e_2) \big) \,.
\end{aligned}
\]
These factorizations have length $2+(n_2-r-1)+(n_1-2) = n_1+n_2-2-(r-1)$ and $2+(r-1)+(n_1-2)=n_1+r-1$. Since not both of them can be in $\{2, n_2, n_1+n_2-2\}$, a contradiction.

\smallskip
\noindent
CASE 1.3.2: \ $v = n_2-2$.

Assume to the contrary that $\mathsf h (U)=\mathsf h (V)=n_2-1$. Since, by Corollary \ref{5.3}, the elements $g', g'' \in G$ with $\mathsf v_{g'}(U)=n_2-1$ and $\mathsf v_{g''}(V)=n_2-1$ are uniquely determined, it follows that $g=g'=-g''$ and hence $v=\mathsf h (S)=n_2-1$, a contradiction.
Thus, after exchanging $U$ and $V$ if necessary, we may assume that $\mathsf h (U) = n_2-2$ and it remains to consider the two cases $\mathsf h (V) = n_2-1$ and $\mathsf h (V) = n_2-2$.

\smallskip
\noindent
CASE 1.3.2.1: \ $\mathsf h (U) = n_2-2$ and $\mathsf h (V) = n_2-1$.

By Corollary \ref{5.3}, we infer that
\[
\begin{aligned}
 U & = (e_1+ye_2)^{n_1-1} e_2^{n_2-2} \big(-xe_1+(-xy+1)e_2 \big) \ \big(-(n_1-1-x)e_1+(-(n_1-1-x)y+1)e_2 \big) \,, \\
-V & = e_2^{n_2-1} \prod_{\nu=1}^{n_1}(e_1'+x_{\nu} e_2)  \,,
\end{aligned}
\]
where $(e_1, e_2)$ and $(e_1', e_2)$ are bases and all parameters are as in Corollary \ref{5.3}. Since $|S|=|U|-2$, it follows that $(e_1+ye_2)^{n_1-3} \t (-V)$ and hence, after renumbering if necessary, $e_1'+x_1e_2= \ldots = e_1'+x_{n_1-3}e_2=e_1+y e_2$. Thus, if we write $-V$ with respect to the basis $(e_1,e_2)$, it still has the above structure. Therefore we may assume that $e_1=e_1'$ and $y=x_1= \ldots = x_{n_1-3}$. Thus we obtain that
\[
-V = e_2^{n_2-1} (e_1+ye_2)^{n_1-3} \prod_{\nu=n_1-2}^{n_1}(e_1+x_{\nu} e_2) \,.
\]
Note that  $e_2 \in \{-h_1, -h_2\}$, $e_2 \notin \{g_1, g_2\}$,  say $-h_1=e_2$ and $-h_2=e_1+x_{n_1}e_2$, and $g_1+g_2=-(h_1+h_2)=e_1+(x_{n_1}+1)e_2$. This condition on the sum shows that
\[
\{g_1, g_2\} = \{ -xe_1+(-xy+1)e_2, -(n_1-1-x)e_1+(-(n_1-1-x)y+1)e_2  \} \,.
\]
This implies that $(e_1+ye_2)^{n_1-1} \t (-V)$ and hence, after renumbering if necessary,
\[
-V = e_2^{n_2-1} (e_1+ye_2)^{n_1-1} (e_1+x_{n_1} e_2) \,.
\]
Since $g_1+g_2 = -(n_1-1)e_1 - (y(n_1-1)-2)e_2 $, it follows that the sequence
\[
W_1 = g_1g_2 (-e_1-ye_2)(-e_2)^r \,,
\]
where $r \in [0, n_2-1]$ and $r \equiv -yn_1+2 \mod n_2$, is a minimal zero-sum sequence. Since $r \equiv 2 \mod n_1$, we infer that $r \in [2, n_2-2]$ and that $W_1 \t UV$. Since $g_1+g_2=-(h_1+h_2)$, we obtain that
\[
W_2 = h_1h_2 (e_1+ye_2)e_2^r \in \mathcal B (G) \ , \ \mathsf L (W_2) = \{2\} \quad \text{and} \quad W_2 \t UV \,.
\]
Therefore it follows that
\[
UV = W_1 W_2 \big( (-e_2)e_2 \big)^{n_2-2-r} \big( (e_1+ye_2)(-e_1-ye_2) \big)^{n_1-2} \,,
\]
and hence $n_1+n_2-1-r \in \mathsf L (UV) = \{2, n_2, n_1+n_2-2\}$, a contradiction, since $r \equiv 2 \mod n_1$.

\smallskip
\noindent
CASE 1.3.2.2: \ $\mathsf h (U) = \mathsf h (V) = n_2-2$.

By Corollary \ref{5.3}, we infer that
\[
U  =  (e_1+ye_2)^{n_1-1}e_2^{n_2-2} U'' \ \text{and} \
-V  = (e_1'+y'e_2)^{n_1-1} e_2^{n_2-2} (-V'') \,,
\]
where $(e_1, e_2)$ and $(e_1',e_2)$ are bases, $U'', V'' \in \mathcal F (G)$ with $|U''|=|V''|=2$, and $y, y' \in [0, n_2-1]$. Since $|S|=|U|-2$, it follows that $(e_1'+y'e_2)^{n_1-3} \t U$ and hence $e_1'+y'e_2=e_1+ye_2$. Thus, if we write $-V$ with respect to the basis $(e_1,e_2)$, it still has the above structure. Therefore we may assume that $e_1=e_1'$ and $y=y'$. Therefore it follows that
\[
U  =  (e_1+ye_2)^{n_1-1}e_2^{n_2-2} g_1 g_2 \quad \text{and} \quad V = (-e_1-ye_2)^{n_1-1}(-e_2)^{n_2-2} h_1 h_2 \,,
\]
and hence $g_1+g_2 = -(n_1-1)e_1 - (y(n_1-1)-2)e_2 $. Thus
\[
W_1 = g_1g_2 (-e_1-ye_2)(-e_2)^r \,,
\]
where $r \in [0, n_2-1]$ and $r \equiv -yn_1+2 \mod n_2$, is a minimal zero-sum sequence. Since $r \equiv 2 \mod n_1$, we infer that $r \in [2, n_2-2]$ and that $W_1 \t UV$. Since $g_1+g_2=-(h_1+h_2)$, we obtain that
\[
W_2 = h_1h_2 (e_1+ye_2)e_2^r \in \mathcal A (G) \quad \text{and} \quad W_2 \t UV \,.
\]
Therefore it follows that
\[
UV = W_1 W_2 \big( (-e_2)e_2 \big)^{n_2-2-r} \big( (e_1+ye_2)(-e_1-ye_2) \big)^{n_1-2} \,,
\]
and hence $n_1+n_2-2-r \in \mathsf L (UV) = \{2, n_2, n_1+n_2-2\}$, a contradiction, since $r \equiv 2 \mod n_1$.

\smallskip
\noindent
CASE 1.3.3: \ $v = n_2-3$.

Then $X_3 \t (U')^{-1}U_1$ and hence $|X_3|=n_1$. Since
\[
|X_1X_2X_3|=|U_1|=|U|-v+(n_2-v)=n_1+5 \,,
\]
it follows that $|X_1X_2|=5$, and hence $X_1$ or $X_2$ has length two. Similarly, we obtain that $Y_1$ or $Y_2$ has length two, a contradiction to the earlier mentioned fact that not both, $U_1$ and $V_1$ are divisible by an atom of length two.

\smallskip
\noindent
CASE 1.3.4: \ $v \le n_2-4$.

Then $Y_3 Y_4 \t (V')^{-1}V_1$, and since $|Y_3|\ne2\ne|Y_4|$, we infer that $|Y_3|=|Y_4|=n_1$. Thus $(-Y_3)(-Y_4) \t (U')^{-1}U_1$ and $Y_3(-Y_3)Y_4(-Y_4) \t S(-S)$, a contradiction to {\bf A2}.

\bigskip
\noindent
CASE 1.4: \ $U'=g_1g_2(-h_1-h_2)$ and $V'=h_1h_2(-g_1-g_2)$, where $g_1,g_2, h_1,h_2 \in G$.

We set
\[
x_1=X_1 \cdot \ldots \cdot X_{n_2-v} \quad \text{and} \quad y_1 = Y_1 \cdot \ldots \cdot Y_{n_2-v}
\]
where all $X_i, Y_j \in \mathcal A (G)$, $g_1g_2 (-h_1-h_2) \t X_1X_2 X_3$ (or even $g_1g_2 (-h_1-h_2) \t X_1X_2 $ or $g_1g_2 (-h_1-h_2) \t X_1$), and $h_1h_2 (-g_1-g_2) \t Y_1 Y_2 Y_3$ (or even $h_1h_2 (-g_1-g_2) \t Y_1 Y_2$ or $h_1h_2 (-g_1-g_2) \t Y_1$). We distinguish five subcases 1.4.1 -- 1.4.5.

\smallskip
\noindent
CASE 1.4.1: \ $v = n_2-1$.

By Corollary \ref{5.3} we have
\[
U  = e_2^{n_2-1} \prod_{\nu=1}^{n_1} (e_1+ x_{\nu} e_2)  \ \ \text{and} \
-V  = e_2^{n_2-1} \prod_{\nu=1}^{n_1} (e_1' + x_{\nu}' e_2) \,,
\]
where $(e_1, e_2)$ and $(e_1', e_2)$ are both bases with $\ord (e_2)= n_2$ and $x_{\nu}, x_{\nu}' \in [0, n_2-1]$ for each $\nu \in [1, n_1]$. Since $|S|=|U|-3$, it follows that, after renumbering if necessary,  $\prod_{\nu=4}^{n_1} (e_1+x_{\nu}e_2) \t (-V)$ and hence, after a further renumbering if necessary,  $e_1+x_{\nu}e_2=e_1'+x_{\nu}' e_2$ for each $\nu \in [4, n_1]$. Thus, if we write $-V$ with respect to the basis $(e_1,e_2)$, it still has the above structure. Therefore we may assume that $e_1=e_1'$ and $x_{\nu}=x_{\nu}'$ for each $\nu \in [4, n_1]$. Furthermore, we obtain that
\[
\begin{aligned}
g_1=e_1+x_1e_2, \ \ & g_2=e_1+x_2e_2, \ \ -h_1-h_2=e_1+x_3e_2 \\
h_1 = -e_1-x_1'e_2, \ \ & h_2=-e_1-x_2'e_2, \quad \text{and} \quad -g_1-g_2=-e_1-x_3'e_2 \,,
\end{aligned}
\]
a contradiction, since $\ord(e_1)= n_1 > 3$

\smallskip
\noindent
CASE 1.4.2: \ $v = n_2-2$.

Arguing as at the beginning of CASE 1.3.2 we may assume $\mathsf h (U) = n_2-2$ and it is sufficient to consider the two subcases $\mathsf h (V) = n_2-1$ and $\mathsf h (V) = n_2-2$.

\smallskip
\noindent
CASE 1.4.2.1: \ $\mathsf h (U) = n_2-2$ and $\mathsf h (V) = n_2-1$.

By Corollary \ref{5.3}, we infer that $-V  = e_2^{n_2-1} \prod_{\nu=1}^{n_1}(e_1'+x_{\nu} e_2) $ and
\[
 U  = (e_1+ye_2)^{n_1-1} e_2^{n_2-2} \big(-xe_1+(-xy+1)e_2 \big) \ \big(-(n_1-1-x)e_1+(-(n_1-1-x)y+1)e_2 \big) \,,
 \,,
\]
where $(e_1, e_2)$ and $(e_1', e_2)$ are bases and all parameters are as in Corollary \ref{5.3}. Since $|S|=|U|-3$, it follows that $(e_1+ye_2)^{n_1-4} \t (-V)$ and hence, after renumbering if necessary, $e_1'+x_1e_2= \ldots = e_1'+x_{n_1-4}e_2=e_1+y e_2$. Thus, if we write $-V$ with respect to the basis $(e_1,e_2)$, it still has the above structure. Therefore we may assume that $e_1=e_1'$ and $y=x_5= \ldots = x_{n_1}$. Thus we obtain that
\[
-V = e_2^{n_2-1} (e_1+ye_2)^{n_1-4} \prod_{\nu=1}^{4}(e_1+x_{\nu} e_2) \,.
\]
Since
\[
\gcd \Big( \big(-xe_1+(-xy+1)e_2 \big) \ \big(-(n_1-1-x)e_1+(-(n_1-1-x)y+1)e_2 \big), \ -V \Big) = 1 \,,
\]
it follows that
\[
g_1g_2(-h_1-h_2) = (e_1+ye_2) \big(-xe_1+(-xy+1)e_2 \big) \ \big(-(n_1-1-x)e_1+(-(n_1-1-x)y+1)e_2 \big) \,.
\]
Thus $\mathsf v_{e_1+ye_2}(S) = n_1-2$ and hence, after renumbering if necessary,
\[
-V = e_2^{n_2-1} (e_1+ye_2)^{n_1-2} (e_1+x_{1} e_2) (e_1+x_{2} e_2) \,.
\]
We observe that
\[
\begin{aligned}
\big(-xe_1+(-xy+1)e_2 \big) + \big(-(n_1-1-x)e_1+(-(n_1-1-x)y+1)e_2 \big) = e_1 + (-(n_1-1)y+2)e_2 \,, \\
(-e_1-x_{1} e_2) + (-e_1-x_{2} e_2) = (n_1-2)(e_1+ye_2)+(n_2-1)e_2= -2e_1 + ((n_1-2)y-1)e_2 \,.
\end{aligned}
\]
Consequently, there are $r, r' \in [0, n_2-1]$ such that
\[
\begin{aligned}
W_1 & = \big(-xe_1+(-xy+1)e_2 \big) \big(-(n_1-1-x)e_1+(-(n_1-1-x)y+1)e_2 \big) (-e_1-ye_2) (-e_2)^r \in \mathcal A (G) \ \text{and} \\
W_2 & = (-e_1-x_1e_2)(-e_1-x_2e_2)(e_1+ye_2)^2 e_2^{r'} \in \mathcal A (G)
\end{aligned}
\]
(note that $y \notin \{-x_1, -x_2\}$). Clearly we have
\[
r \equiv 2 - n_1 y \mod n_2 \quad \text{and} \quad r' \equiv   1 - n_1y \mod n_2 \,.
\]
This implies  that  $r \equiv 2 \mod n_1$ and $r' = r-1$. Therefore, we obtain that
\[
UV = W_1 W_2 \big( (e_1+ye_2)(-e_1-ye_2) \big)^{n_1-3} \big( (-e_2)e_2 \big)^{n_2-r-1} \,,
\]
and thus $n_1+n_2-2-r \in \mathsf L (UV) = \{2, n_2, n_1+n_2-2\}$, a contradiction, since $r \equiv 2 \mod n_1$.

\smallskip
\noindent
CASE 1.4.2.2: \ $\mathsf h (U) = \mathsf h (V) = n_2-2$.

By Corollary \ref{5.3}, we infer that
\[
U  =  (e_1+ye_2)^{n_1-1}e_2^{n_2-2} U'' \ \text{and} \
-V  = (e_1'+y'e_2)^{n_1-1} e_2^{n_2-2} (-V'') \,,
\]
where $(e_1, e_2)$ and $(e_1',e_2)$ are bases, $U'', V'' \in \mathcal F (G)$ with $|U''|=|V''|=2$, and $y, y' \in [0, n_2-1]$. Since $|S|=|U|-3$, it follows that $(e_1'+y'e_2)^{n_1-4} \t U$. If $n_1 > 6$, it follows that  $e_1'+y'e_2=e_1+ye_2$. If $n_1=6$, so $n_1$ even, then
\[
U'' = \big(-xe_1+(-xy+1)e_2 \big) \ \big(-(n_1-1-x)e_1+(-(n_1-1-x)y+1)e_2 \big)
\]
is not a square whence $(e_1'+y'e_2)^{2} \ne  U''$ and it follows again that $e_1'+y'e_2=e_1+ye_2$. Thus, if we write $-V$ with respect to the basis $(e_1,e_2)$, it still has the above structure. Therefore we may assume that $e_1=e_1'$ and $y=y'$. Therefore it follows that
\[
U  =  (e_1+ye_2)^{n_1-1}e_2^{n_2-2} U'' \quad \text{and} \quad V = (-e_1-ye_2)^{n_1-1}(-e_2)^{n_2-2} V'' \,,
\]
a contradiction to $|S|=|U|-3$.

\smallskip
\noindent
CASE 1.4.3: \ $v = n_2-3$.

Note that
\[
|X_1X_2X_3|=|U_1|=|U|-v+(n_2-v)=n_1+n_2-1+n_2-(2n_2-6)=n_1+5 \,.
\]

Suppose that $U'$ divides a product of two of the $X_1, X_2, X_3$, say $U' \t X_1X_2$. Then $X_3 \t (U')^{-1}U_1$ and hence $|X_3|=n_1$. Thus $|X_1X_2|=5$ and either $X_1$ or $X_2$ has length two.
Since $X_3$ divides $(U')^{-1}U_1$, it follows that $-X_3$ divides $V_1(V')^{-1}$. After considering a new factorization of $V_1$ if necessary we may suppose without restriction that $Y_3=-X_3$. Arguing as above we infer that $Y_1$ or $Y_2$ has length two, a contradiction to the earlier mentioned fact that not both, $U_1$ and $V_1$ are divisible by an atom of length two.

Thus from now on we may assume that for every  $X \in \mathcal A (G)$ dividing $U_1$ we have $|\gcd (X, U')|=1$, and  for every  $Y \in \mathcal A (G)$ dividing $V_1$ we have $|\gcd (Y, V')|=1$.
Arguing as at the beginning of CASE 1.3.2 we obtain that $\mathsf h (U) = n_2-3$ or $\mathsf h (V)=n_2-3$, say $\mathsf h (U)=n_2-3$.
By Corollary \ref{5.3}, we infer that
 \[
      U = (e_1+ye_2)^{n_1-1} e_2^{n_2-3} \prod_{\nu=1}^{3} ( -x_{i} e_1 +(-x_{\nu}y+1) e_2) \,,
 \]
with all parameters as described there. Since $|S|=|U|-3$, it follows that $(e_1+ye_2)^{n_1-4} \t (-V)$ and thus
\[
V = (-e_1-ye_2)^{n_1-4}(-e_2)^{n_2-3} V'' \quad \text{where} \quad V'' \in \mathcal F (G) \ \text{with} \ |V''|=6 \,.
\]
Since
\[
U_1 =(-e_2)^{3} (e_1+ye_2)^{n_1-1}  \prod_{\nu=1}^{3} ( -x_{\nu} e_1 +(-x_{\nu}y+1) e_2) = X_1X_2X_3 \,,
\]
it follows that $(-e_2) \t X_{\nu}$ for each $\nu \in [1,3]$. Since $(e_1+ye_2)^{n_1-1} (-e_2)^{3}$ is zero-sum free, each of the $X_{\nu}$ is divisible by at least one of the elements from $\prod_{\nu=1}^{3} ( -x_{\nu} e_1 +(-x_{\nu}y+1) e_2)$. Thus, after renumbering if necessary, it follows that for each $\nu \in [1,3]$
\[
X_{\nu} = (-e_2) ( -x_{\nu} e_1 +(-x_{\nu}y+1) e_2) (e_1+ye_2)^{x_{\nu}} \,.
\]
This implies that $x_1+x_2+x_3=n_1-1$. Since $|\gcd (X_{\nu}, U')|=1$ for each $\nu \in [1,3]$, it follows that $U' = \prod_{\nu=1}^{3} ( -x_{\nu} e_1 +(-x_{\nu}y+1) e_2)$, and hence
\[
V = (-e_1-ye_2)^{n_1-1}(-e_2)^{n_2-3} V'   \,.
\]
Since
\[
V_1  = e_2^3(-e_1-ye_2)^{n_1-1}  V' = Y_1Y_2Y_3 \,,
\]
it follows that $e_2 \t Y_{\nu}$ for each $\nu \in [1,3]$. Since $(-e_1-ye_2)^{n_1-1} e_2^3$ is zero-sum free, each of the $Y_{\nu}$ is divisible by at least one of the elements from $V'$. Setting $h_3=-g_1-g_2$ and renumbering if necessary, it follows that for each $\nu \in [1,3]$
\[
Y_{\nu} = e_2 h_{\nu} (-e_1-ye_2)^{y_{\nu}} \,,
\]
where $y_1, y_2, y_3 \in \N_0$ with $y_1+y_2+y_3=n_1-1$. For each $\nu \in [1,3]$ it follows that $h_{\nu} = y_{\nu}e_1 + (y y_{\nu}-1)e_2$. Therefore we obtain that
\[
\begin{aligned}
0 & = g_1+g_2+h_3 = \Big( -x_{1} e_1 +(-x_1 y+1) e_2 \Big) + \Big( -x_{2} e_1 +(-x_2 y+1) e_2 \Big) + \Big( y_3e_1 + (y y_3-1)e_2 \Big) \\
 & = \Big( -x_1-x_2+y_3 \Big) e_1 + \Big( (-x_1-x_2+y_3) y + 1 \Big) e_2 \,,
\end{aligned}
\]
a contradiction, as not both, $-x_1-x_2+y_3$ and  $(-x_1-x_2+y_3) y + 1$, can be multiples of $n_1$.

\smallskip
\noindent
CASE 1.4.4: \ $v = n_2-4$.

Then $X_4 \t (U')^{-1}U_1$ and hence $|X_4|=n_1$. Since
\[
|X_1X_2X_3X_4|=|U_1|=|U|-v+(n_2-v)=n_1+7 \,,
\]
it follows that $|X_1X_2X_3|=7$, and hence $X_1$, $X_2$, or $X_3$ has length two. Similarly, we obtain that $Y_1$, $Y_2$, or $Y_3$ has length two, a contradiction to the earlier mentioned fact that not both, $U_1$ and $V_1$ are divisible by an atom of length two.

\smallskip
\noindent
CASE 1.4.5: \ $v \le n_2-5$.

Then $Y_4 Y_5 \t (V')^{-1}V_1$, and since $|Y_4|\ne2\ne|Y_5|$, we infer that $|Y_4|=|Y_5|=n_1$. Thus $(-Y_4)(-Y_5) \t (U')^{-1}U_1$ and $Y_4(-Y_4)Y_5(-Y_5) \t S(-S)$, a contradiction to {\bf A2}.

\bigskip
\noindent
CASE 2: \  $\mathsf{h}(S) < n_2/2$.

We distinguish two subcases, depending on the parity of $n_2$.

\bigskip
\noindent
CASE 2.1: \  $n_2$ is odd.

Since $n_2$ is odd, we have $3n_1\le n_2$ and $n_1 \ge 7$.
We write $S = \prod_{\nu=1}^k a_{\nu}^{{\alpha_{\nu}}}$ with $a_1, \ldots, a_k \in G$ pairwise distinct and $\alpha_1  \ge \ldots \ge \alpha_{k} \ge 1$.
Since $|S|= \sum_{\nu=1}^k {\alpha_{\nu}} \ge n_2 + n_1  -4$ and $\alpha_1 \le (n_2 - 1)/2$, it follows that $k \ge 3$.
We define $T_1 = S(a_1 a_2 a_3)^{-1}$ and set $T_1= \prod_{i=1}^l b_i^{{\beta_i}}$ with $b_1, \ldots, b_l \in G$ pairwise distinct and $\beta_1  \ge \ldots \ge \beta_{l}\ge 1$.
Since  $\beta_1 \le \alpha_1 \le  (n_2 - 1)/2$,  $\sum_{i=1}^k {\beta_i} = |S|-3 \ge n_2 + n_1  -7$, and $n_2-1 < n_1+n_2-7$,  it follows that $l \ge 3$.
Applying Lemma \ref{5.5} (with parameters $t=l$, $\alpha = |T_1|$, $\alpha_1'= \ldots = \alpha_t'=0$ and $\alpha_1= \ldots = \alpha_t=(n_2-1)/2$; note that we have $s+1=3$) we infer that
\[
\begin{aligned}
\prod_{\nu=1}^l (1 + \beta_{\nu} ) & \ge \left( 1 + \frac{n_2 - 1}{2} \right)^2 \big( 1+ (|T_1| - (n_2-1) \big) \\
 & \ge \left( 1 + \frac{n_2 - 1}{2} \right)^2 (1 + 1)= \frac{n_2^2 + 2n_2 + 1}{2} > n_1 n_2 \,.
\end{aligned}
\]
Thus Lemma \ref{5.4} implies that there is a $W_1 \in \mathcal A (G)$ with $|W_1| \ge 3$ such that
$(-W_1) W_1 \t (-T_1)T_1 $. Since $|W_1| < |U|$, {\bf A1} implies that $W_1 = n_1$. We write $W_1 = W_1' (-W_1'')$ with $W_1'W_1''\mid T_1$.

We define $T_2 = S(W_1'W_1'')^{-1}$ and  note that $(-S)S = W_1(-W_1)T_2(-T_2)$. Furthermore,
\[
|T_2|=|S|-n_1 \ge n_2 - 4  \quad \text{and} \quad |\supp (T_2)|\ge 3 \,.
\]
We set $T_2=\prod_{\nu=1}^m c_{\nu}^{{\gamma_{\nu}}}$ with $c_1, \ldots, c_m \in G$ pairwise distinct and $\gamma_1 \ge \ldots \ge  \gamma_m \ge 1$.
Applying Lemma \ref{5.5} (with parameters $t=m$, $\alpha = |T_2|$, $\alpha_1'= \ldots = \alpha_3'=1$, $\alpha_4' = \ldots = \alpha_t'=0$ and $\alpha_1= \ldots = \alpha_t=(n_2-1)/2$; note that we have $s+1=3$) we infer that
\[
\begin{aligned}
\prod_{\nu=1}^m (1+{\gamma_{\nu}}) & \ge \left( 1+ \frac{n_2-1}{2} \right) \left( 1+|T_2|-\left(\frac{n_2-1}{2}+1\right) \right) (1+1) \\
 & = \frac{n_2+1}{2} \frac{n_2-7}{2} 2 = \frac{1}{2}(n_2^2 - 6n_2-7) > n_1n_2 \,.
\end{aligned}
\]
Thus Lemma \ref{5.4} implies that there is a $W_2 \in \mathcal A (G)$ with $|W_2| \ge 3$ such that
$(-W_2)W_2 \t (-T_2)T_2 $. Since $|W_2| < |U|$, {\bf A1} implies that $W_2 = n_1$. Therefore we obtain that $W_1(-W_1)W_2 (-W_2) \t (-S)S$, a contradiction to {\bf A2}.

\bigskip
\noindent
CASE 2.2: \  $n_2$ is even.

We distinguish three cases; the first one is that $|U|=|V|=  n_1 + n_2 -2$ and the two others deal with the case $|U| = n_1 + n_2 -2$, further distinguishing based on the  description recalled in Lemma \ref{structure}.

\smallskip
\noindent
CASE 2.2.1: \ $|U|=|V|=  n_1 + n_2 -2$.

First we handle the case $n_2>12$. The special case $n_2=12$ will follow by the same strategy but the details will be different.

We write $S= \prod_{\nu=1}^k a_{\nu}^{{\alpha_{\nu}}}$ with ${\alpha_1} \ge \ldots \ge \alpha_{k}$.
Since $\sum_{\nu=1}^k {\alpha_{\nu}} = n_2 + n_1  -2$ and $\alpha_1 \le (n_2 - 2)/2$, it follows that $k \ge 3$.

We define $T_1= \prod_{\nu=1}^l b_{\nu}^{{\beta_{\nu}}}$ with ${\beta_1} \ge \ldots \ge \beta_{l}$
to be a subsequence of $S$ of length  $n_2 - 2$ such that $\beta_2 \le n_2/2 - 3$ and such that $T_1^{-1}S$ contains at least $4$ distinct elements
or $3$ elements with multiplicity at least $2$.
Applying Lemma \ref{5.5} (with parameters $t=l$, $\alpha= |T_1|=n_2-2$, $\alpha_1'=\ldots=\alpha_t'=0$, and $\alpha_1=(n_2-2)/2$, $\alpha_2= \ldots = \alpha_t=(n_2-6)/2$; note that we have $s+1=3$) we infer that
\[
\prod_{\nu=1}^l (1 + \beta_{\nu} ) \ge \left( 1 + \frac{n_2 - 2}{2} \right) \left(1 + \frac{n_2 - 6}{2} \right) (1 + 2)= 3\left( \frac{n_2^2}{4} - n_2 \right) > n_1n_2 \,,
\]
where the last inequality holds because $n_2 > 12$.
Thus Lemma \ref{5.4} implies that there is a $W_1 \in \mathcal A (G)$ with $|W_1| \ge 3$ such that $(-W_1)W_1 \t (-T_1)T_1 $. Since  $|W_1|< |U|$, {\bf A2} implies that
$|W_1|=n_1$. We write $W_1 = W_1' (-W_1'')$ with $W_1'W_1''\mid T_1$.

We define $T_2 = S(W_1'W_1'')^{-1} = \prod_{\nu=1}^m c_{\nu}^{\gamma_{\nu}}$ with $\gamma_1 \ge \ldots \ge \gamma_m$. We note that $T_1^{-1}S \t T_2$,  $(-S)S=W_1(-W_1)T_2(-T_2)$, and $|T_2|=n_2 - 2$. By construction of $T_1^{-1}S$, we obtain that either ($\gamma_3 \ge 2$) or ($\gamma_3 \ge 1$ and $\gamma_4 \ge 1$).
Applying  Lemma \ref{5.5} (with parameters $t=m$, $\alpha=|T_2|$, $\alpha_1=\ldots=\alpha_t=(n_2-2)/2$, and either ($\alpha_1'= \ldots = \alpha_3'=2$, $\alpha_4'=\ldots=\alpha_t'=0$) or ($\alpha_1'= \ldots = \alpha_4'=1$, $\alpha_5'=\ldots=\alpha_t'=0$) we infer that either
\[
\prod_{\nu=1}^m(1+\gamma_{\nu}) \ge \left( 1 + \frac{n_2}{2}-1 \right) \big( 1+ (|T_2|- \left(\frac{n_2}{2}-1\right)-2) \big)(1+2) = \frac{n_2}{2}\left(\frac{n_2}{2}-2 \right) 3 > n_1n_2 \,,
\]
or
\[
\prod_{\nu=1}^m(1+\gamma_{\nu}) \ge \left(1 + \frac{n_2}{2}-1\right) \left( 1+ |T_2|-\left(\frac{n_2}{2}-1\right)-2 \right) \big)(1+1)(1+1) = \frac{n_2}{2} \left( \frac{n_2}{2}-2 \right) 4 > n_1n_2 \,.
\]
Thus Lemma \ref{5.4} implies that there is a $W_2 \in \mathcal A (G)$ with $|W_2| \ge 3$ such that  $(-W_2)W_2 \t (-T_2)T_2$. Since $|W_2| < |U|$, {\bf A1} implies that $|W_2|=n_1$. Therefore we obtain that $W_1(-W_1)W_2 (-W_2) \t (-S)S$, a contradiction to {\bf A2}.

Now suppose that  $n_2 = 12$. Then $n_1 = 6$ and $|S| = 16$.
Again we set $S= \prod_{\nu=1}^k a_{\nu}^{{\alpha_{\nu}}}$ with ${\alpha_1} \ge \ldots \ge \alpha_{k}$. Since $\mathsf h (S) \le 5$, we infer  that $k \ge 4$.
We define $T_1 = S(a_1 a_2 a_3 a_4)^{-1}$ and set $T_1= \prod_{\nu=1}^l b_{\nu}^{\beta_{\nu}}$ with $\beta_1 \ge \ldots \ge \beta_{l}$. Observe that  $\beta_1 \le 4$. Applying Lemma \ref{5.5} (with parameters $t=l$, $\alpha= |T_1|=12$, $\alpha_1'=\ldots=\alpha_t'=0$, and $\alpha_1 = \ldots = \alpha_t=4$) we infer that
\[
\prod_{\nu=1}^l (1 + {\beta_{\nu}} ) \ge ( 1 + 4 )^3 >  n_1n_2 \,.
\]
Thus Lemma \ref{5.4} implies that there is a $W_1 \in \mathcal A (G)$ with $|W_1| \ge 3$ such that   $(-W_1)W_1 \t (-T_1)T_1 $. Since $|W_1|<|U|$, {\bf A1} implies that
 $|W_1|=n_1=6$. We write $W_1 = W_1' (-W_1'')$ with $W_1'W_1''\mid T_1$.

We define $T_2 = S(W_1'W_1'')^{-1} = \prod_{\nu=1}^m c_{\nu}^{\gamma_{\nu}}$ with $\gamma_1 \ge \ldots \ge \gamma_m$. We note that $|T_2|=n_2 - 2= 10$ and $m \ge 4$.
Applying Lemma \ref{5.5} (with parameters $t=m$, $\alpha= |T_2|=10$, $\alpha_1'=\ldots=\alpha_4'=1$, $\alpha_5'= \ldots = \alpha_t'=0$, and $\alpha_1 = \ldots = \alpha_t=5$) we infer that
\[
\prod_{\nu=1}^m(1+\gamma_{\nu}) \ge (1+5)(1 + 3)(1+ 1)(1+ 1) > n_1n_2 \,,
\]
and we obtain a contradiction as above.

\smallskip
\noindent
CASE 2.2.2: \ $U$ is of type I, as given in Lemma \ref{structure}.

Then
\[
\frac{n_2}{2}-1 \ge \mathsf h (S) \ge \mathsf h (U)-3 \ge \ord (e_j)-4 \,,
\]
which implies that $\ord(e_j)=n_1$ so $j=1$. We assert that
\[
\mathsf v_{e_1} (UV) + \mathsf v_{-e_1} (UV) \ge n_1+1 \,.
\]
If this holds, then Lemma 5.2 in \cite{Ge-Gr-Sc11a} implies that $\mathsf L (UV) \cap [3, n_1] \ne \emptyset$, a contradiction. Since $\mathsf v_{-e_1}(V)\ge \mathsf v_{-e_1}(-S)\ge \mathsf v_{e_1}(U)-3$, we obtain that
\[
\mathsf v_{e_1}(UV) + \mathsf v_{-e_1}(UV) \ge (n_1-1)+(n_1-1)-3 = 2n_1-5 \ge n_1+1 \,.
\]

\smallskip
\noindent
CASE 2.2.3: \ $U$ is of type II, as given in Lemma \ref{structure}.

We observe that
\[
\frac{n_2}{2}-1 \ge \mathsf h (S) \ge \mathsf h (U)-3 = \max \{ sn_1-1, n_2-sn_1+ \epsilon\} - 3\,.
\]
This implies that $s=\frac{n_2}{2n_1}$, hence $\mathsf v_{e_2} (U)=\frac{n_2}{2}+\epsilon$. Thus $\epsilon \in [1,2]$ and $e_2^{\epsilon+1} \t U'$.

Assume to the contrary that  $U'=g_1g_2(-h_1-h_2)=e_2^3$. Then $V'=(-2e_2)h_1h_2$, $|V|=\mathsf D (G)$, and
\[
\mathsf v_{-e_2}(V) \ge \mathsf v_{-e_2}(-S) = \mathsf v_{e_2}(S) = \mathsf v_{e_2}(U)-3 \ge \frac{n_2}{2}+\epsilon-3 \,.
\]
Thus $\mathsf v_{-e_2}(V) \ge 1$, which implies that $V^* = (-2e_2)^{-1}(-e_2)^{2}V \in \mathcal A (G)$, but $|V^*|=|V|+1= \mathsf D (G)+1$, a contradiction.

Since $\epsilon=2$ implies that $U'=e_2^3$, we obtain that $\epsilon=1$, $\mathsf v_{e_2}(U')=2$, $\mathsf v_{-e_2}(V')=0$, and
\[
\mathsf v_{-e_2}(V)=\mathsf v_{-e_2}(-S)=\mathsf v_{e_2}(S)=\mathsf v_{e_2}(U)-2=\frac{n_2}{2}-1\,.
\]
We consider
\[
U_1= e_2^{-\mathsf v_{e_2}(U)}(-e_2)^{\mathsf v_{-e_2}(V)}U \quad \text{and} \quad V_1= (-e_2)^{-\mathsf v_{-e_2}(V)} e_2^{\mathsf v_{e_2}(U)}V \,.
\]
Neither $U_1$ nor $V_1$ is divisible by an atom of length $2$,  and since $|V_1|=|V|+2 > \mathsf D (G)$, $V_1 \notin \mathcal A (G)$. Therefore we obtain that
\[
2 < \max \mathsf L (U_1)+\max \mathsf L (V_1) \le \frac{|U_1|}{3} + \frac{|V_1|}{3} = \frac{|UV|}{3} \le  \frac{2n_1+2n_2-2}{3} < n_2 \,,
\]
a contradiction.
\end{proof}

\section{Characterization of the system $\mathcal L (C_{n_1} \oplus C_{n_2})$} \label{6}

In this section we  prove Theorem \ref{1.1}. We start with two propositions which gather various special cases which have been settled before. The first groups, for which the Characterization Problem has been solved, are cyclic groups and elementary $2$-groups (\cite{Ge90c}). We use the characterization of groups $C_n \oplus C_n$ (\cite{Sc09c}) and \cite{B-G-G-P13a}).   The core of this section is Proposition \ref{6.5}.

\begin{proposition} \label{6.1}
Let $G$ be an abelian group such that $\mathcal L (G) = \mathcal L (C_{n_1} \oplus C_{n_2})$ where $n_1, n_2 \in \N$ with $n_1 \t n_2$ and $n_1+n_2>4$. Then $G$ is finite, and we have
\begin{enumerate}
\item $\mathsf d (G) = \mathsf d (C_{n_1}\oplus C_{n_2}) = n_1+n_2-2$ and $\exp (G)= n_2$;

\smallskip
\item If $n_1 = n_2$, then $G \cong C_{n_1} \oplus C_{n_2}$.
\end{enumerate}
\end{proposition}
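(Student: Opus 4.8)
The plan is to extract as much information as possible about $G$ directly from the invariants that are encoded in $\mathcal L(G)$, and then to feed the two special cases (the general rank-two computation and the diagonal case $n_1=n_2$) into the results that have already been established earlier in the paper. The key observation is that several arithmetical invariants are \emph{determined} by the system of sets of lengths: by Proposition \ref{3.4}.1 we have $\rho_2(G)=\mathsf D(G)$, and by Proposition \ref{3.4}.3 the elasticity $\rho(G)=\mathsf D(G)/2$ is read off from $\mathcal L(G)$ as well. Hence $\mathcal L(G)=\mathcal L(C_{n_1}\oplus C_{n_2})$ forces
\[
\mathsf D(G)=\rho_2(G)=\rho_2(C_{n_1}\oplus C_{n_2})=\mathsf D(C_{n_1}\oplus C_{n_2})=n_1+n_2-1,
\]
so $\mathsf d(G)=\mathsf D(G)-1=n_1+n_2-2=\mathsf d(C_{n_1}\oplus C_{n_2})$ by Proposition \ref{2.3}.1. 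First I would record that $G$ must be finite: if $G$ were infinite then every finite subset of $\N_{\ge 2}$ would be a set of lengths (as recalled at the start of Section \ref{3}), whereas $\mathcal L(C_{n_1}\oplus C_{n_2})$ has bounded elasticity $\rho=\mathsf D(G)/2<\infty$, a contradiction. This simultaneously gives finiteness and statement 1 up to the claim on the exponent.

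For $\exp(G)=n_2$, I would use the maximum of the set of distances together with the structural description of $\Delta^*(G)$. By Proposition \ref{3.3}.2, $\max\Delta^*(G)=\max\{\exp(G)-2,\mathsf r(G)-1\}$, and $\Delta^*(G)$ is visible in $\mathcal L(G)$ since it is exactly the set of minimal differences appearing among sets of lengths. The strategy is to argue that, because $\mathsf D(G)=n_1+n_2-1$ is already fixed and $\mathsf r(G)$ is controlled by $\mathsf d^*(G)\le \mathsf d(G)$, the only way to realize the distances forced by $\mathcal L(C_{n_1}\oplus C_{n_2})$ is with $\exp(G)=n_2$. Concretely, $\Delta(C_{n_1}\oplus C_{n_2})$ is the interval $[1,\mathsf D(G)-2]$ by Proposition \ref{3.3}.1, and the extremal distance data pins down the exponent; comparing $\max\Delta^*$ on both sides yields $\exp(G)=n_2$ once one rules out the alternative that the rank term $\mathsf r(G)-1$ dominates, which is where the bound $\mathsf d^*(G)\le n_1+n_2-2$ on the total size of $G$ is used.

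For statement 2, the case $n_1=n_2$, I would invoke the prior characterization of groups of the form $C_n\oplus C_n$ cited in the text (\cite{Sc09c}): once we know $\mathsf D(G)=2n-1$, $\exp(G)=n$, and $\mathsf r(G)=2$ from part 1 and the exponent argument, the hypothesis $\mathcal L(G)=\mathcal L(C_n\oplus C_n)$ falls precisely into the scope of that theorem, forcing $G\cong C_n\oplus C_n$. I expect the \textbf{main obstacle} to be the exponent identification: reading $\mathsf D(G)$ and $\mathsf d(G)$ off $\mathcal L(G)$ is immediate, but separating $\exp(G)$ from the rank requires careful bookkeeping with $\Delta^*(G)$ and the inequality $1+\mathsf d^*(G)\le \mathsf D(G)$, since a priori a group with larger rank and smaller exponent could share the same Davenport constant. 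The delicate point is to show that the distinctive distances forced by $\mathcal L(C_{n_1}\oplus C_{n_2})$ cannot be reproduced unless the exponent is exactly $n_2$; this is where one must combine Proposition \ref{3.3} with the constraint $\mathsf d(G)=n_1+n_2-2$ to eliminate all competing groups.
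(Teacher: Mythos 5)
Your treatment of finiteness and of $\mathsf d(G)$ is correct and in fact more self-contained than the paper's, which simply cites \cite[Proposition 7.3.1]{Ge-HK06a}: reading $\rho_2$ off $\mathcal L(G)$ gives $\mathsf D(G)=\rho_2(G)=\rho_2(C_{n_1}\oplus C_{n_2})=n_1+n_2-1$ by Proposition \ref{3.4}.1 and Proposition \ref{2.3}.1, and an infinite $G$ would make $\rho_2$ infinite. Statement 2 is handled exactly as in the paper, by citing \cite{Sc09c}.

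The genuine gap is the exponent identification, which is precisely the part you flag as the main obstacle but do not actually close. Two problems. First, your premise that ``$\Delta^* (G)$ is exactly the set of minimal differences appearing among sets of lengths'' is not correct: $\Delta^*(G)$ is defined via subsets $G_0\subset G$, and recovering even $\max\Delta^*(G)$ from $\mathcal L(G)$ is itself a nontrivial theorem (one needs the realization of arbitrarily long A(A)MPs with prescribed difference, i.e.\ Proposition \ref{3.2}.2--3 plus an inverse argument), not a definition. Second, and more seriously, the invariants you propose to combine --- $\mathsf D(G)$, $\mathsf d(G)$, $\mathsf d^*(G)$ and $\max\Delta^*(G)$ --- genuinely do not determine the exponent. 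For instance, take $n_1=3$, $n_2=12$ and compare $C_3\oplus C_{12}$ with $G'=C_2^{10}\oplus C_4$: both are groups with $\mathsf d=\mathsf d^*=13$, $\mathsf D=14$, and $\max\Delta^*=\max\{\exp-2,\mathsf r-1\}=10$ on both sides (for $G'$ the rank term dominates, for $C_3\oplus C_{12}$ the exponent term does). So the bound $\mathsf d^*(G)\le n_1+n_2-2$ does not rule out the alternative that $\mathsf r(G)-1$ realizes $\max\Delta^*(G)$ while $\exp(G)<n_2$. Excluding such competitors requires finer length-set information; the paper outsources exactly this step to \cite[Proposition 5.2]{Sc11b} or \cite[Proposition 5.4]{B-G-G-P13a}, and indeed distinguishing $C_{n_1}\oplus C_{n_2}$ from groups of the shape $C_2^{s}\oplus C_m$ is the hard content of the whole paper (compare Propositions \ref{5.1} and \ref{6.5}). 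As written, your argument for $\exp(G)=n_2$ does not go through.
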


\begin{proof}
1. The finiteness of $G$ and the equality of the Davenport constants follows from \cite[Proposition 7.3.1]{Ge-HK06a}. The statement on the exponents follows from  \cite[Proposition 5.2]{Sc11b} or from \cite[Proposition 5.4]{B-G-G-P13a}.

\smallskip
2. This follows from \cite[Theorem 4.1]{Sc09c}.
\end{proof}

\begin{proposition} \label{6.2}
Let   $n_1, n_2 \in \N$ with $n_1 \t n_2$ and $n_1+n_2>4$, and let $G = H \oplus C_{n_2}$   where $H \subset G$ is a subgroup with $\exp (H) \t n_2$. Suppose that $\mathcal L (G)  = \mathcal L (C_{n_1} \oplus C_{n_2})$.
\begin{enumerate}
\item $\mathsf d (H) \le n_1-1$, and if $\mathsf d^* (H) = n_1-1$, then $\mathsf d (G) = \mathsf d^* (G)$.

\smallskip
\item If $\mathsf d (G) = \mathsf d^* (G)$, then $G \cong C_{n_1} \oplus C_{n_2}$.

\smallskip
\item If $n_1 \in [1,5]$, then $G \cong C_{n_1} \oplus C_{n_2}$.

\end{enumerate}
\end{proposition}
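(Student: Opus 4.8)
The plan is to establish the three assertions in turn, using throughout Proposition \ref{6.1}, which already guarantees that $G$ is finite with $\mathsf d (G) = n_1 + n_2 - 2$ and $\exp (G) = n_2$. For the first assertion I would argue only with Davenport constants. Concatenating a zero-sum free sequence over $H$ of length $\mathsf d (H)$ with one over the complementary cyclic factor of length $n_2 - 1$ yields a zero-sum free sequence over $G$, so $\mathsf d (H) + (n_2 - 1) \le \mathsf d (G) = n_1 + n_2 - 2$ and hence $\mathsf d (H) \le n_1 - 1$. Since $\exp (H) \t n_2$, writing $H \cong C_{m_1} \oplus \ldots \oplus C_{m_t}$ in canonical form shows that $G \cong C_{m_1} \oplus \ldots \oplus C_{m_t} \oplus C_{n_2}$ is again in canonical form, whence $\mathsf d^* (G) = \mathsf d^* (H) + (n_2 - 1)$; thus $\mathsf d^* (H) = n_1 - 1$ forces $\mathsf d^* (G) = n_1 + n_2 - 2 = \mathsf d (G)$.

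For the second assertion, assume $\mathsf d (G) = \mathsf d^* (G)$. Combining $\mathsf d^* (G) = \mathsf d^* (H) + n_2 - 1$ with $\mathsf d (G) = n_1 + n_2 - 2$ gives $\mathsf d^* (H) = n_1 - 1$, and together with the first assertion and $\mathsf d^* (H) \le \mathsf d (H)$ we obtain $\mathsf d (H) = \mathsf d^* (H) = n_1 - 1$. It now suffices to prove that $H$ is cyclic, since a cyclic group with $\mathsf d^* = n_1 - 1$ has order $n_1$, giving $G \cong C_{n_1} \oplus C_{n_2}$. Suppose instead $H$ is not cyclic, so that $\mathsf r (G) = \mathsf r (H) + 1 \ge 3$. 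The plan is to exploit this extra rank to produce a set of lengths whose length-three period lies outside the family realized over rank two groups: the rank two systems produce periods of the shape $\{0, n_1 - 2, n_2 - 2\}$ (Proposition \ref{3.5}), whereas an independent third generator yields a strictly larger middle gap, as already visible in the $2$-torsion case of Proposition \ref{3.7}. Certifying that such a set is absent from $\mathcal L (C_{n_1} \oplus C_{n_2})$ is exactly the kind of non-containment supplied by Proposition \ref{5.1}; as that result requires $6 \le n_1 < n_2$, the residual ranges $n_1 = n_2$ and $n_1 \le 5$ are deferred to Proposition \ref{6.1} and to the third assertion.

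For the third assertion, let $n_1 \le 5$. By the first assertion $\mathsf d (H) \le n_1 - 1 \le 4$, so $\mathsf r (H) \le 4$ and every cyclic factor of $H$ has order at most $5$; consequently $H$ lies in a finite explicit list, and every group on it is a $p$-group or has rank at most two and hence satisfies $\mathsf d (H) = \mathsf d^* (H)$ by Proposition \ref{2.3}. If $H$ is cyclic, say $H = C_m$, then $G = C_m \oplus C_{n_2}$ has rank at most two, so $\mathsf d (G) = \mathsf d^* (G) = (m-1) + (n_2 - 1)$ by Proposition \ref{2.3}; comparison with $\mathsf d (G) = n_1 + n_2 - 2$ gives $m = n_1$, and we are done. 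If $H$ is not cyclic then $n_1 \ge 3$ and $H$ runs through a short list (for instance $C_2^2$ for $n_1 = 3$, $C_2^3$ for $n_1 = 4$, and $C_2^4$, $C_2 \oplus C_4$, $C_3^2$ for $n_1 = 5$, each with $\exp (H) \t n_2$); here the hypothesis $\mathsf d (G) = \mathsf d^* (G)$ is unavailable, so the second assertion cannot be invoked, and instead I would distinguish $\mathcal L (G)$ from $\mathcal L (C_{n_1} \oplus C_{n_2})$ by exhibiting a concrete set of lengths, using the explicit families of Proposition \ref{3.7} for the $2$-torsion-rich groups and the explicit descriptions of Proposition \ref{4.2} and Theorem \ref{4.1} when $n_2$ is also small.

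The step I expect to be the main obstacle is the non-containment argument shared by the second and third assertions: one must pass from the merely structural fact $\mathsf r (H) \ge 2$ to an honest zero-sum sequence over $G$ whose set of lengths has a period of the forbidden shape, and then prove that no such set occurs over $C_{n_1} \oplus C_{n_2}$. Proposition \ref{3.7} carries out the construction when the additional rank stems from $2$-torsion, but performing it for an arbitrary non-cyclic $H$, and reconciling the parameter window $6 \le n_1 < n_2$ of Proposition \ref{5.1} with hands-on treatments of the finitely many small residual groups, is where the genuine work lies.
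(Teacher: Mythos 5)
Your part 1 matches the paper's argument and is correct. The problems are in parts 2 and 3. For part 2 the paper simply invokes \cite[Theorem 5.6]{B-G-G-P13a}; you instead sketch a proof from scratch whose decisive step --- producing, for an \emph{arbitrary} non-cyclic $H$ with $\mathsf d (G) = \mathsf d^* (G)$, a zero-sum sequence over $G$ whose set of lengths is certified to be absent from $\mathcal L (C_{n_1} \oplus C_{n_2})$ --- is not carried out. Proposition \ref{3.7} constructs such sets only for $G = C_2^{r-1} \oplus C_n$, and Proposition \ref{5.1} supplies the non-containment only for $6 \le n_1 < n_2$; for $H$ with higher torsion (say $H \cong C_2 \oplus C_6$ when $n_1 = 7$) no analogous construction is given, and nothing in the paper hands you one. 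Deferring the residual ranges of part 2 to part 3 then makes the gap in part 3 fatal rather than harmless.

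For part 3 you miss the paper's actual mechanism: one does not distinguish systems of sets of lengths at all. The point is that for $n_1 \le 5$ \emph{every} admissible non-cyclic $H$ (namely $C_2^2$, $C_2^3$, $C_2^4$, $C_2 \oplus C_4$, $C_3 \oplus C_3$ --- note your list omits $C_2^2$ for $n_1 \in \{4,5\}$ and $C_2^3$ for $n_1 = 5$, since part 1 only forces $\mathsf d (H) \le n_1 - 1$, not equality) leads to $\mathsf d (G) = \mathsf d^* (G)$: either $\mathsf d^* (H) = n_1 - 1$ and part 1 applies, or $H \in \{C_2^2, C_2^3\}$ and one invokes the known equality $\mathsf d = \mathsf d^*$ for $C_2^2 \oplus C_{2m}$ and $C_2^3 \oplus C_{2m}$ (Baayen's theorem, resp.\ \cite[Corollary 4.2.13]{Ge09a}, according to the parity of $m$). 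Part 2 then finishes the proof. Your remark that ``the hypothesis $\mathsf d (G) = \mathsf d^* (G)$ is unavailable, so the second assertion cannot be invoked'' is exactly backwards --- that equality is the thing to be verified --- and the substitute route you propose has no tool to exclude these groups, precisely because Proposition \ref{5.1} requires $n_1 \ge 6$ and Proposition \ref{3.7} only separates $C_2^{r-1} \oplus C_n$ from the cyclic group $C_n$, not from a rank-two group.
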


\begin{proof}
1. Proposition \ref{6.1} implies that
\[
n_1+n_2-2 = \mathsf d (G) \ge \mathsf d (H) + (n_2-1) \quad \text{and hence} \quad \mathsf d (H) \le n_1-1 \,.
\]
If $\mathsf d^* (H) = n_1-1$, then
\[
n_1+n_2-2 = \mathsf d (G) \ge \mathsf d^* (G) = \mathsf d^* (H) + (n_2-1) = n_1+n_2-2 \,.
\]

2. This follows from \cite[Theorem 5.6]{B-G-G-P13a}.

\smallskip
3. By 2., it is sufficient to show that $\mathsf d (G)=\mathsf d^* (G)$. Suppose that $H$ is cyclic. Then $\mathsf r (G) \le 2$ and  Proposition \ref{2.3} implies that $\mathsf d (G) = \mathsf d^* (G)$. Suppose  that $H$ is noncyclic. Then $2 \le \mathsf r (H) \le \mathsf d (H) \le n_1-1$, and hence $n_1 \in [3,5]$.
Suppose that $n_1=3$. Then $\mathsf d (H)=2$ and $H \cong C_2 \oplus C_2$. Thus $\mathsf d^* (H)=2=n_1-1$, and the assertion follows from 1.
Suppose that $n_1=4$. Then $\mathsf d (H) \in [2,3]$ and $H$ is isomorphic to $C_2 \oplus C_2$ or to $C_2^3$. If $H \cong C_2^3$, then $\mathsf d^* (H)=n_1-1$, and the assertion follows from 1. Suppose that $H \cong C_2 \oplus C_2$ and set $n_2=2m$. If $m$ is even,  then $\mathsf d (G) = \mathsf d^* (G)$ by \cite[Corollary 4.2.13]{Ge09a}. If $m$ is odd, then $\mathsf d (G) = \mathsf d^* (G)$  by \cite[Theorem 3.13]{Sc11b}.
Suppose that $n_1= 5$. Then $\mathsf d (H) \in [2,4]$ and $H$ is isomorphic to one of the following groups:  $C_2^2, C_2^3, C_2^4, C_2 \oplus C_4, C_3 \oplus C_3$. If $H$ is isomorphic to one of the groups in $\{C_2^4, C_2 \oplus C_4, C_3 \oplus C_3 \}$, then $\mathsf d^* (H) = n_1-1$. If $H \cong C_2 \oplus C_2$, then $\mathsf d (G) = \mathsf d^* (G)$ as outlined above.
Suppose that  $H \cong C_2^3$. Then $G = C_2^3 \oplus C_{n_2}$ and we set $n_2=2m$. If $m$ is even, then again \cite[Corollary 4.2.13]{Ge09a} implies that $\mathsf d (G) = \mathsf d^* (G)$. If $m$ is odd, then this follows from \cite{Ba69a}.
\end{proof}

\smallskip
We need the following characterization of decomposable subsets.

\smallskip
\begin{lemma} \label{6.3}
Let $G$ be a finite abelian group and $G_0 \subset G$  a subset.
\begin{enumerate}
\item The following statements are equivalent.
      \begin{enumerate}
      \item $G_0$ is decomposable.

      \item There are nonempty subsets $G_1, G_2 \subset G_0$  such that $G_0=G_1 \uplus G_2$ and $\mathcal B(G_0)=\mathcal B(G_1)\time \mathcal B(G_2)$.

      \item There are nonempty subsets $G_1, G_2 \subset G_0$ such that $G_0=G_1 \uplus G_2$ and $\mathcal A(G_0) = \mathcal A(G_1) \uplus \mathcal A(G_2)$.

      \item There are nonempty subsets $G_1, G_2 \subset G_0$ such that $\langle G_0 \rangle = \langle G_1 \rangle \oplus \langle G_2 \rangle$.
      \end{enumerate}

\smallskip
\item There exist a uniquely determined $t \in \N$ and (up to order) uniquely determined nonempty indecomposable sets $G_1, \ldots, G_t \subset G_0$ such that
      \[
      G_0 = \biguplus_{\nu=1}^t G_{\nu} \quad \text{and} \quad \langle G_0 \rangle = \bigoplus_{\nu=1}^t \langle G_{\nu} \rangle \,.
      \]
\end{enumerate}
\end{lemma}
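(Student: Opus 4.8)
The plan is to prove part~1 through the cycle $(d)\Rightarrow(b)\Rightarrow(c)\Rightarrow(a)\Rightarrow(d)$, isolating the only substantial implication, $(a)\Rightarrow(d)$, for the end. All three routine implications rest on the same elementary remark: whenever $G_0=G_1\uplus G_2$ is a partition, each $B\in\mathcal F(G_0)$ splits canonically as $B=B_1B_2$ with $\supp(B_i)\subseteq G_i$, and then $\sigma(B)=\sigma(B_1)+\sigma(B_2)$.

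For $(d)\Rightarrow(b)$ (reading $(d)$, consistently with $(b),(c)$, so that $G_1\uplus G_2=G_0$): a zero-sum $B$ splits as $B_1B_2$ with $\sigma(B_1)\in\langle G_1\rangle$, $\sigma(B_2)\in\langle G_2\rangle$ and $\sigma(B_1)+\sigma(B_2)=0$; directness of the sum forces $\sigma(B_1)=\sigma(B_2)=0$, so $B_i\in\mathcal B(G_i)$ and $\mathcal B(G_0)=\mathcal B(G_1)\times\mathcal B(G_2)$. For $(b)\Rightarrow(c)$: the atoms of a direct product are the atoms of its factors, and a minimal zero-sum sequence over $G_i$ is again one over $G_0$, whence $\mathcal A(G_0)=\mathcal A(G_1)\uplus\mathcal A(G_2)$. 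For $(c)\Rightarrow(a)$: put $H_i=\mathcal B(G_i)$; each is a nonunit submonoid (it contains $g^{\ord(g)}$ for any $g\in G_i$), and splitting the atom factorization of an arbitrary $B$ according to $\mathcal A(G_0)=\mathcal A(G_1)\uplus\mathcal A(G_2)$ exhibits $\mathcal B(G_0)=H_1\times H_2$, so $G_0$ is decomposable.

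The heart of the argument is $(a)\Rightarrow(d)$, where an \emph{abstract} decomposition $\mathcal B(G_0)=H_1\times H_2$ must be recognized as arising from a partition of $G_0$. First I would observe that each factor is divisor-closed in $\mathcal B(G_0)$: as $\mathcal B(G_0)$ is reduced, any divisor of an element of $H_1$ has trivial $H_2$-component and hence lies in $H_1$. The key lemma is that the divisor-closed submonoids of $\mathcal B(G_0)$ are precisely the block monoids $\mathcal B(G')$ with $G'\subseteq G_0$. Indeed $\mathcal B(G')$ is always divisor-closed; conversely, if $S$ is divisor-closed with support $G'=\bigcup_{B\in S}\supp(B)$, then for any $U\in\mathcal A(G')$ one multiplies, for each $g\in\supp(U)$, an element of $S$ containing $g$ to obtain $B\in S$ with $\mathsf v_g(B)\ge1$ on $\supp(U)$; for large $N$ one has $U\mid B^N$ in $\mathcal F(G_0)$, hence $U\mid B^N$ in $\mathcal B(G_0)$ by saturation (the cofactor is zero-sum), so $U\in S$ and $S=\mathcal B(G')$. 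Applying this, $H_i=\mathcal B(G_i)$ with $G_i=\supp(H_i)$, and $G_1\cup G_2=G_0$. Disjointness is immediate: $g\in G_1\cap G_2$ would place the nonunit $g^{\ord(g)}$ in $H_1\cap H_2=\{1\}$. Finally, for directness of $\langle G_1\rangle+\langle G_2\rangle$, a nonzero $h\in\langle G_1\rangle\cap\langle G_2\rangle$ gives, using finiteness so that $\langle G_i\rangle=[G_i]$, sequences $T_1\in\mathcal F(G_1)$, $T_2\in\mathcal F(G_2)$ with $\sigma(T_1)=h$ and $\sigma(T_2)=-h$; then $T_1T_2\in\mathcal B(G_0)=\mathcal B(G_1)\times\mathcal B(G_2)$ forces $\sigma(T_1)=0$, a contradiction. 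I expect this implication, and specifically the block-monoid description of divisor-closed submonoids, to be the main obstacle.

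For part~2, existence of the decomposition into indecomposables follows by induction on $|G_0|$: split any decomposable $G_0$ via $(d)$ and recurse, terminating by finiteness. For uniqueness, given $G_0=\biguplus_\nu G_\nu=\biguplus_\mu G'_\mu$ into indecomposables with $\langle G_0\rangle=\bigoplus_\nu\langle G_\nu\rangle=\bigoplus_\mu\langle G'_\mu\rangle$, I would fix $\nu$ and intersect, writing $G_\nu=\biguplus_\mu(G_\nu\cap G'_\mu)$. Since each $\langle G_\nu\cap G'_\mu\rangle\subseteq\langle G'_\mu\rangle$ sits inside the direct sum, this yields $\langle G_\nu\rangle=\bigoplus_\mu\langle G_\nu\cap G'_\mu\rangle$. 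Indecomposability of $G_\nu$ (via the equivalence $(a)\Leftrightarrow(d)$ just proved) forces exactly one intersection to be nonempty, i.e.\ $G_\nu\subseteq G'_\mu$ for a unique $\mu$; the symmetric argument gives $G'_\mu\subseteq G_{\nu'}$, and disjointness of the $G_\nu$ then yields $\nu=\nu'$ and $G_\nu=G'_\mu$. Hence the two decompositions coincide up to order.
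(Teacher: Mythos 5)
Your proof is correct. Note, though, that the paper does not prove this lemma at all: it simply cites \cite[Lemma 3.7]{Sc04a}, \cite[Lemma 3.2]{Ba-Ge14b} for part 1 and \cite[Proposition 3.10]{Sc04a} for part 2, so what you have written is a self-contained replacement for those references rather than an alternative to an argument in the text. Your route is essentially the standard one from those sources: the load-bearing step $(a)\Rightarrow(d)$ goes through the observation that the direct factors of a reduced monoid are divisor-closed, together with the classification of divisor-closed submonoids of $\mathcal B (G_0)$ as the block monoids $\mathcal B (G')$ with $G' \subset G_0$ (your ``take $B \in S$ with full support on $\supp (U)$ and divide $U$ into $B^N$'' argument is exactly right, and the saturation step --- the cofactor is again zero-sum --- is where the specific structure of $\mathcal B(G_0)$ enters). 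Two smaller points in your favour: you correctly flag that statement (d) must be read with the partition condition $G_0 = G_1 \uplus G_2$ (without it the equivalence genuinely fails, e.g.\ for $G_0 = \{e_1, e_2, e_1+e_2\} \subset C_2^2$, where $\langle G_0 \rangle = \langle e_1 \rangle \oplus \langle e_2 \rangle$ although $G_0$ is indecomposable because of the atom $e_1e_2(e_1+e_2)$); and your use of finiteness of $G$ to get $\langle G_i \rangle = [G_i]$ in the directness argument is exactly where the hypothesis that $G$ is finite is needed. The uniqueness argument in part 2, refining one decomposition against the other and invoking $(a)\Leftrightarrow(d)$ to rule out a nontrivial split of an indecomposable component, is also the standard proof and is complete as written.
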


\begin{proof}
For 1. see \cite[Lemma 3.7]{Sc04a} and \cite[Lemma 3.2]{Ba-Ge14b}, and for 2. we refer to \cite[Proposition 3.10]{Sc04a}.
\end{proof}

\smallskip
We  need the invariant
\[
\mathsf m (G) = \max \{ \min \Delta (G_0) \mid G_0 \subset G \
\text{is a non-half-factorial subset with}  \ \mathsf k (A)\ge 1 \ \text{for all} \ A \in \mathcal A (G_0) \} \,.
\]

\begin{lemma} \label{6.4}
Let $G$ be a finite abelian group, $G_0 \subset G$ a subset with $\min \Delta (G_0) = \max \Delta^* (G)$, and let $G_0 = \bigcup_{\nu=1}^t G_{\nu}$ be the decomposition into indecomposable components. If  $\exp (G) > \mathsf m (G)+2$,   then each component $G_{\nu}$ is either half-factorial or equal to $\{-g_{\nu}, g_{\nu}\}$ for some $g_{\nu} \in G$ with $\ord (g_{\nu}) = \exp (G)$. Moreover, at least one of the components $G_{\nu}$ is not half-factorial.
\end{lemma}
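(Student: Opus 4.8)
The plan is to combine the decomposition of $G_0$ into indecomposable components with the defining property of $\mathsf m(G)$, reducing everything to a structural statement about a single indecomposable component. First I would record the elementary inequality $\max\Delta^*(G)\ge \exp(G)-2>\mathsf m(G)$, which holds by Proposition~\ref{3.3}.2 together with the hypothesis; in particular $\min\Delta(G_0)=\max\Delta^*(G)\ge 1$, so $G_0$ is not half-factorial. Writing $G_0=\biguplus_{\nu=1}^t G_\nu$ for the decomposition of Lemma~\ref{6.3}, we have $\mathcal B(G_0)=\prod_\nu\mathcal B(G_\nu)$ and $\mathcal A(G_0)=\biguplus_\nu\mathcal A(G_\nu)$. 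Hence a zero-sum sequence supported on a single $G_\nu$ has the same factorizations in $\mathcal B(G_0)$ as in $\mathcal B(G_\nu)$, so that $\Delta(G_\nu)\subseteq\Delta(G_0)$; moreover $\mathcal L(G_0)$ is the sumset of the $\mathcal L(G_\nu)$, so if all components were half-factorial then $G_0$ would be too. As it is not, at least one component is non-half-factorial, which proves the last assertion. For any non-half-factorial component $G_\nu$ we then obtain $\min\Delta(G_0)\le\min\Delta(G_\nu)\le\max\Delta^*(G)=\min\Delta(G_0)$, the middle inequality because $\min\Delta(G_\nu)\in\Delta^*(G)$; thus $\min\Delta(G_\nu)=\max\Delta^*(G)$ for every such component.

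Next I would exploit the hypothesis through the invariant $\mathsf m(G)$. Fix a non-half-factorial component $G_\nu$; by the previous step $\min\Delta(G_\nu)=\max\Delta^*(G)>\mathsf m(G)$. Since $\mathsf m(G)$ is the supremum of $\min\Delta$ over all non-half-factorial subsets all of whose atoms have cross number at least $1$, the set $G_\nu$ cannot lie in that family, so there is an atom $A\in\mathcal A(G_\nu)$ with $\mathsf k(A)<1$. As $\mathsf k(A)\ge|A|/\exp(G)$, such an atom is short, $|A|<\exp(G)$, whereas the atoms $h^{\ord(h)}$ with $h\in G_\nu$ always have cross number exactly $1$. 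Thus each non-half-factorial component is an indecomposable, non-half-factorial set which attains the maximal possible minimal distance and nevertheless carries an atom of cross number strictly below $1$.

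It remains to show that such a component must equal $\{g_\nu,-g_\nu\}$ with $\ord(g_\nu)=\exp(G)$, and this is the step I expect to be the main obstacle. Passing to the subgroup $\langle G_\nu\rangle$ I may assume that $G_\nu$ generates $G$, and then $\min\Delta(G_\nu)=\max\Delta^*(\langle G_\nu\rangle)$ is maximal for the ambient group. The target is to prove that the only indecomposable generating sets realizing the value $\max\Delta^*(G)$ while possessing a short atom are the pairs $\{g,-g\}$: heuristically, adjoining any further element to $\{g,-g\}$ creates atoms of intermediate length and thereby drops the gcd of the distance set below $\exp(G)-2$. I would make this precise via the classification of the subsets attaining $\max\Delta^*$ that underlies Proposition~\ref{3.3} (from \cite{Ge-Zh15a}); in the cyclic case the gap $\max\bigl(\Delta^*(C_n)\setminus\{n-2\}\bigr)=\lfloor n/2\rfloor-1$ of Proposition~\ref{3.3}.3 already isolates $\{g,-g\}$ as the unique extremal configuration, and the general (higher-rank) case is reduced to it through the indecomposability of $G_\nu$ together with the short-atom analysis of the preceding paragraph. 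The delicate point, and the reason this is the crux, is to control all atoms of an arbitrary indecomposable generating set finely enough to force cyclicity of $\langle G_\nu\rangle$. Once $G_\nu=\{g_\nu,-g_\nu\}$ is established, the identity $\min\Delta(G_\nu)=\ord(g_\nu)-2=\max\Delta^*(G)\ge\exp(G)-2$ forces $\ord(g_\nu)\ge\exp(G)$, hence $\ord(g_\nu)=\exp(G)$, completing the argument.
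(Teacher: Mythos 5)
Your reductions are correct as far as they go: using Lemma~\ref{6.3} to split $\mathcal B(G_0)=\prod_\nu\mathcal B(G_\nu)$, concluding that at least one component is non-half-factorial, showing $\min\Delta(G_\nu)=\max\Delta^*(G)$ for every such component, and then invoking the definition of $\mathsf m(G)$ together with $\exp(G)>\mathsf m(G)+2$ to produce an atom $A\in\mathcal A(G_\nu)$ with $\mathsf k(A)<1$ (hence $|A|<\exp(G)$) are all sound steps. But the argument stops exactly where the lemma actually lives: you still have to show that an indecomposable, non-half-factorial set $G_\nu$ with $\min\Delta(G_\nu)=\max\Delta^*(G)$ and an atom of cross number below $1$ must be of the form $\{-g,g\}$ with $\ord(g)=\exp(G)$, and you explicitly defer this (``the step I expect to be the main obstacle''). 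The sketch you offer for it does not close the gap: Proposition~\ref{3.3}.3 only gives the numerical value $\max\bigl(\Delta^*(C_n)\setminus\{n-2\}\bigr)=\lfloor n/2\rfloor-1$; it does not classify the subsets of $C_n$ attaining $n-2$, and it says nothing about indecomposable subsets generating a non-cyclic group, so the promised ``reduction to the cyclic case through indecomposability'' is an unproved (and nontrivial) claim, not a proof.

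For comparison, the paper does not prove this lemma internally at all: it cites \cite[Corollary 5.2]{Sc08d}, which is precisely the structural classification you are missing (it rests on a detailed inverse analysis of non-half-factorial sets with large minimal distance carried out in that paper). So your proposal correctly identifies and isolates the crux, but the crux is the entire content of the lemma, and as written the proof is incomplete.
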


\begin{proof}
See \cite[Corollary 5.2]{Sc08d}.
\end{proof}

\begin{proposition} \label{6.5}
Let $n_1, n_2 \in \N$ with  $n_1 \t n_2$ and $6 \le n_1 < n_2$, and let $G$ be a finite abelian group with $\exp (G)=n_2$ and $\mathsf d (G) = n_1+n_2-2$. Suppose that $\mathcal L (G)$ contains, for all $k \in \mathbb N$, the sets
\[
L_k = \Big\{(kn_2+3) + (n_1-2)+(n_2-2) \Big\} \cup \Big(
(2k+3) + \{0, n_1-2, n_2-2 \} + \{\nu (n_2-2) \mid \nu \in [0,k] \}  \Big)  \,.
\]
Then $G$ is isomorphic to one of the following groups{\rm \,:}
\[
C_{n_1}\oplus C_{n_2} \,, \  C_2^{s} \oplus C_{n_2} \ \text{with} \ s \in \{n_1-2,n_1-1\} \,, C_2^{n_1-4} \oplus C_4 \oplus C_{n_2} \,,
\ \text{or} \ C_2 \oplus C_{n_1-1} \oplus C_{n_2} \ \text{ with} \  2 \t (n_1-1) \t n_2   \,.
\]
\end{proposition}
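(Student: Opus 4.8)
The plan is to pin down a cyclic direct summand $\langle g\rangle$ of maximal order $n_2$, transport the problem to the quotient $\overline G=G/\langle g\rangle$, and reconstruct $\overline G$ from the restrictions that the family $(L_k)_{k}$ imposes. First I would extract two numerical facts from the hypotheses. Writing $G$ in invariant factors, the largest factor is $C_{n_2}$, so $\mathsf d^*(G)\ge (n_2-1)+(\mathsf r(G)-1)$; since $\mathsf d^*(G)\le\mathsf d(G)=n_1+n_2-2$ this forces $\mathsf r(G)\le n_1<n_2$, and then Proposition~\ref{3.3} yields $\max\Delta^*(G)=n_2-2$. I would also verify the inequality $\exp(G)=n_2>\mathsf m(G)+2$ needed to invoke Lemma~\ref{6.4}, using that $\mathsf r(G)\le n_1<n_2$.

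Next, fix a large $k$ and choose $B_k\in\mathcal B(G)$ with $\mathsf L(B_k)=L_k$. The set $L_k$ is an AAMP of difference $n_2-2=\max\Delta^*(G)$ whose central AMP has period $\{0,n_1-2,n_2-2\}$ and length tending to infinity with $k$. The inverse of the Structure Theorem (\cite[Proposition 9.4.9]{Ge-HK06a}) then forces $B_k$ to contain a long periodic part whose support $G_0$ satisfies $\min\Delta(G_0)=\max\Delta^*(G)$; Lemma~\ref{6.4} shows every indecomposable component of $G_0$ is half-factorial or equal to $\{-g,g\}$ with $\ord(g)=n_2$, and the non-half-factorial one supplies $g$ with $\ord(g)=n_2$ and $\bigl((-g)g\bigr)^{2n_2}\t B_k$. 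With this $g$ and $d=n_1-2$ I would apply Proposition~\ref{3.8}: one checks $n_1-2\in[1,n_2-3]\setminus\{(n_2-2)/2\}$ (the value $(n_2-2)/2$ is excluded since $n_2\ge 2n_1>2n_1-2$), whence every $S\in\mathcal A\bigl(\mathcal B_{\langle g\rangle}(G)\bigr)$ dividing $B_k$ has $\sigma(S)\in\{0,g,-g,(n_1-1)g,-(n_1-1)g\}$, and of any two such atoms at least one has $\sigma\in\{0,g,-g\}$.

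Because $\ord(g)=\exp(G)$, the subgroup $\langle g\rangle$ is a direct summand, so $G\cong C_{n_2}\oplus\overline G$ with $\overline G=G/\langle g\rangle$; from $\mathsf d(G)=n_1+n_2-2$ we obtain directly $\mathsf d(\overline G)\le\mathsf d(G)-(n_2-1)=n_1-1$. By Proposition~\ref{2.5} the transfer homomorphism $\theta\colon\mathcal B_{\langle g\rangle}(G)\to\mathcal B(\overline G)$ identifies the $\langle g\rangle$-atoms dividing $B_k$ with minimal zero-sum sequences over $\overline G$, so the sum restriction becomes a statement about how such sequences lift. The remaining task is to determine $\overline G$ among groups of Davenport constant at most $n_1-1$ and exponent dividing $n_2$.

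The hard part will be this final reconstruction, which occupies the bulk of the section. The sum restriction says each minimal zero-sum sequence over $\overline G$ realized inside $B_k$ admits a lift whose $\langle g\rangle$-sum is one of only five values, while the maximal atom $W\t B_k$ of length $\mathsf D(G)=n_1+n_2-1$ — the one responsible for the isolated top element $\max L_k=(k+1)n_2+n_1-1$ — has the rigid form furnished by Lemma~\ref{structure} and Corollary~\ref{5.3}. The key is to show that $\overline G$ can have neither a third independent generator of order $>2$ nor too many $C_2$-summands, as either would force a lifted atom with $\langle g\rangle$-sum outside $\{0,\pm g,\pm(n_1-1)g\}$ or contradict $\mathsf d(\overline G)\le n_1-1$; the admissible parities ($2\t n_2$, $4\t n_2$, and $2\t(n_1-1)\t n_2$ in the non-cyclic cases) drop out of requiring the forced sums $m\,h\in\langle g\rangle$ to be compatible with the five allowed values. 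A case analysis along these lines should leave precisely $\overline G\in\{C_{n_1},C_2^{n_1-2},C_2^{n_1-1},C_2^{n_1-4}\oplus C_4,C_2\oplus C_{n_1-1}\}$, and $G\cong C_{n_2}\oplus\overline G$ gives the list. I expect the genuine difficulty to be bookkeeping rather than any single estimate: the five sum-values and the single exceptional atom-length $n_1-1$ must be matched consistently across the whole family $(L_k)_{k}$, and excluding the borderline configurations (for instance distinguishing a would-be $C_3$- or $C_4$-direction) is where the delicate combinatorics lies.
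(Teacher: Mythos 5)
Your setup matches the paper's: locating an element $g$ of order $n_2$ via the inverse of the Structure Theorem and Lemma~\ref{6.4}, invoking Proposition~\ref{3.8} to constrain the $\langle g\rangle$-sums of atoms of $\mathcal B_{\langle g\rangle}(G)$ dividing $B_k$ to $\{0,\pm g,\pm(n_1-1)g\}$, and reducing to the determination of $H\cong G/\langle g\rangle$ with $\mathsf d(H)\le n_1-1$. But the final reconstruction, which you defer to ``a case analysis along these lines,'' is where the actual proof lives, and the mechanism you propose -- matching the five allowed sum-values against putative generators of $\overline G$ -- is not the one that works and gives no route to the specific list of five groups. The constraint from Proposition~\ref{3.8} alone does not see the structure of $H$ at all; it only restricts sums inside $\langle g\rangle$.

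What the paper actually does is convert the hypothesis into a lower bound on a catenary degree. After splitting $B_k=C_kD_k$ with $\supp(C_k)\subset\{0,g,-g\}$ and $|D_k|$ bounded independently of $k$, it shows (assertion {\bf A5}) that $\mathsf L(B_k)$ is a union, over factorizations $z$ of $D_k$ in $\mathcal B_{\langle g\rangle}(G)$, of the sets $\mathsf L_{\mathcal B(\langle g\rangle)}(C_k\sigma(z))$, and that this union is genuinely non-trivial because $-m+L_k\notin\mathcal L(C_{n_2})$ (Proposition~\ref{3.6}.2). The long case analysis ({\bf A6}) then proves that any two factorizations of $D_k$ either lie at distance at least $n_1-1$ or induce the same set of lengths up to a harmless shift; combining these yields $\mathsf c_{\mathcal B_{\langle g\rangle}(G)}(D_k)\ge n_1-1$ ({\bf A7}). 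Via the transfer homomorphism of Proposition~\ref{2.5} this gives $n_1-1\le\mathsf c(\mathcal B(H))\le\mathsf D(H)\le n_1$, and the classification of finite abelian groups with catenary degree equal to $\mathsf D$ or $\mathsf D-1$ (Proposition~\ref{2.4}, i.e.\ cyclic, elementary $2$-groups, $C_2\oplus C_{2n}$, $C_2^{r-1}\oplus C_4$) is precisely what produces the five candidates. Without this catenary-degree bridge your argument has no way to exclude, say, a $C_3$- or $C_5$-summand in $H$, or to distinguish $C_2^{n_1-4}\oplus C_4$ from other groups of the same Davenport constant; so the proposal as it stands has a genuine gap at its decisive step.
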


\begin{proof}
We set $G = H \oplus C_{n_2}$  where  $H \subset G$ is a subgroup with $\exp (H) \t n_2$.  If $H$ is cyclic, then $\mathsf d (G) = |H|+n_2-2$ whence $|H|=n_1$ and $G \cong C_{n_1} \oplus C_{n_2}$. For the remainder of the proof  we suppose that $H$ is non-cyclic. Since $\mathsf d (H) + n_2-1 \le \mathsf d (G)=n_1+n_2-2$, it follows that $\mathsf d (H) \le n_1-1$, and hence $\exp (H) \le \mathsf D (H) \le n_1$. Since $\exp (H)=n_1$ would imply that $H$ is cyclic of order $n_1$, it follows that $\exp (H) \le n_1-1$. We have  $\mathsf r (H) \le \mathsf d (H) \le n_1-1$. If $\mathsf r (H)=n_1-1$, then $H \cong C_2^{n_1-1}$ and hence $G \cong C_2^{n_1-1} \oplus C_{n_2}$.  Thus for the remainder of the proof we suppose that $\mathsf r (H) \in [2, n_1-2]$.

We start with the following two assertions.

\begin{enumerate}
\item[{\bf A1.}\,] $\exp (G) > \mathsf m (G) + 2$.

\smallskip
\item[{\bf A2.}\,] Let $G_0 \subset G$ with $\min \Delta (G_0) = n_2-2$. Then $G_0 = \{g, -g\} \cup G_1$ where $\ord (g) = n_2$, $G_1 \subset G$ is half-factorial, and $\langle G_1 \rangle \cap \langle g \rangle  = \{0\}$.
\end{enumerate}

\smallskip
{\it Proof of} \,{\bf A1}.\, Assume to the contrary that $n_2 \le \mathsf m (G) + 2$. By \cite[Proposition 3.6]{Sc09c}, we have
\[
\mathsf m (G) \le \max \{ \mathsf r^* (G) - 1, \mathsf K (G)-1\} \,, \ \text{where $\mathsf K (G)$ is the cross number of $G$}\,.
\]
So $\mathsf r^* (G) \le \log_2 |G|$,  $\mathsf K (G) \le \frac{1}{2} + \log|G| \le \frac{1}{2} + \log_2 |G|$ by \cite[Theorem 5.5.5]{Ge-HK06a} whence $\mathsf m (G) \le - \frac{1}{2} + \log_2 |G|$.
If $H = C_{m_1} \oplus \ldots \oplus C_{m_s}$, with $s = \mathsf r (H) \ge 2$, $m_1, \ldots, m_s \in \N$,   and $1 < m_1 \t \ldots \t m_s \t n_2$, then
\[
\log_2 |H| = \sum_{i=1}^s \log_2 m_i \le \sum_{i=1}^s (m_i-1) = \mathsf d^* (H) \le \mathsf d (H) \,.
\]
Therefore we obtain that
\[
\begin{aligned}
n_2-2 & \le \mathsf m (G) \le -\frac{1}{2} + \log_2|G|=  -\frac{1}{2} + \log_2 n_2 + \log_2|H| \le -\frac{1}{2} + \log_2 n_2 + \mathsf d (H) \\
      & \le - \frac{3}{2} + \log_2 n_2 + n_1  \le - \frac{3}{2} + \log_2 n_2 + \frac{n_2}{2}
\end{aligned}
\]
and hence
\[
\frac{n_2}{2} \le \log_2 n_2 + \frac{1}{2} \,, \ \text{a contradiction to $n_2 \ge 7$. \qquad \qed{(Proof of {\bf A1})}}
\]

\smallskip
{\it Proof of} \,{\bf A2}.\,
By Lemma \ref{6.3}, $G_0$ has a decomposition into indecomposable subsets, say $G_0 = \cup_{\nu=1}^t G_{\nu}$. Proposition \ref{3.3}.2 implies that $\max \Delta^* (G)=n_2-2= \min \Delta (G_0)$. By {\bf A1} and Lemma \ref{6.4}, the sets $G_{\nu}$ have the following structure: there is an $s \in [1,t]$ such that $G_{\nu} = \{-g_{\nu}, g_{\nu}\}$ with $\ord (g_{\nu}) = n_2$ for each $\nu \in [1,s]$, and $G_{s+1}, \ldots, G_t$ are half-factorial. Now it follows that $s=1$ because, by Lemma \ref{6.3}.2,
\[
\langle G_0 \rangle = \bigoplus_{\nu=1}^t \langle G_{\nu} \rangle \subset G = H \oplus C_{n_2} \quad \text{and} \quad \exp (H) < n_2  \,. \qed{\text{\rm (Proof of {\bf A2})}}
\]

\smallskip
By assumption, for every $k \in \N$ the sets $L_k =$
\[
\Big\{(kn_2+3) + (n_1-2)+(n_2-2) \Big\} \cup \Big(
(2k+3) + \{0, n_1-2, n_2-2 \} + \{\nu (n_2-2) \mid \nu \in [0,k] \}  \Big) \in \mathcal L (G) \,.
\]
Clearly, these sets  are AAMPs with difference $n_2-2$ and period $\{0, n_1-2, n_2-2\}$ and, for all sufficiently large $k \in \mathbb N$, $L_k$ is not an AAMP with some difference $d$ which is not a multiple of $n_2-2$ (\cite[Theorem 4.2.7]{Ge-HK06a}).
Let $k \in \N$ be sufficiently large. In the course of the proof we will meet certain bounds and will assume that $k$ exceeds all of them.

We choose  $B_k \in\mathcal B (G)$ such that $\mathsf L (B_k) = L_k$.
By \cite[Proposition 9.4.9]{Ge-HK06a}, there exists an $M_1 \in \N$ (not depending on $k$) such that  $B_k = V_k S_k$, where $V_k$ and $S_k$ are zero-sum sequences with  the following properties:
\[
\min \Delta \big( \supp (V_k) \big) = n_2 -2 \quad \text{and} \quad |S_k| \le M_1 \,,
\]
(indeed in the terminology of \cite[Proposition 9.4.9]{Ge-HK06a}, we have
$V_k \in V\LK V \RK$ and $S_k \in \mathcal B (G)[\mathcal U,  V]$ for a given full almost generating set $\mathcal U$; but we do not need these additional properties).
By {\bf A2}, we obtain that
\[
\supp (V_k) = \{-g_k, g_k\} \cup A_k, \quad \text{where} \ \ord (g_k) = n_2 \ \text{and} \ A_k \subset G \ \text{is half-factorial with} \ \langle g_k \rangle \cap \langle A_k \rangle = \{0\} \,.
\]
Since for each two elements $g, g' \in G$ with $\ord (g) = \ord (g') = n_2$, there is a group automorphism $\varphi \colon G \to G$ with $\varphi (g) = g'$, and since $\mathsf L (B) = \mathsf L \big( \varphi (B) \big)$ for all $B \in \mathcal B (G)$, we may assume without restriction that there is a $g \in G$ such that $g_k = g$ for every $k \in \N$. Applying a further automorphism if necessary we may suppose that $G = H \oplus \langle g \rangle$.
We will require an additional   assertion

\begin{enumerate}
\item[{\bf A3.}\,] There exist a constant $M_2 \in \N$ (not depending on $k$), $C_k \in \mathcal B (\supp (V_k))$, and $D_k \in \mathcal B (G^{\bullet})$ with the following properties:
\begin{itemize}
\item $B_k = C_k D_k$ with $|D_k| \le M_2$,
\item For any factorization $z = W_1 \cdot \ldots \cdot W_{\gamma} \in \mathsf Z (B_k)$ with $W_1, \ldots, W_{\gamma} \in \mathcal A (G)$ there are $I, J$ such that $[1, \gamma] = I \uplus J$, $\prod_{i \in I} W_i = C_k$ and $\prod_{j \in J} W_j = D_k$.
\end{itemize}
\end{enumerate}

\smallskip
{\it Proof of} \,{\bf A3}.\, Let $z = X_1 \cdot \ldots \cdot X_{\alpha}Y_1 \cdot \ldots \cdot Y_{\beta}$ be a factorization of $B_k$, where $X_1, \ldots, X_{\alpha}, Y_1, \ldots, Y_{\beta}$ are atoms, and $Y_1, \ldots, Y_{\beta}$ are precisely those atoms which contain some element from $S_k$. Then $\beta \le |S_k| \le M_1$ and $X_1 \cdot \ldots \cdot X_{\alpha}$ divides $V_k$ (in $\mathcal B (G)$). For any element $a \in \supp (V_k)$ let $m_a (z) \in \N_0$ be maximal such that $a^{\ord (a) m_a(z)}$ divides $X_1 \cdot \ldots \cdot X_{\alpha}$. Since $\beta \le M_1$, there is a constant $M_3(z) \in \N$ (not depending on $k$) such that $\mathsf v_a (B_k) - \ord (a) m_a(z) \le M_3 (z)$.
Now we define, for each $a \in \supp (V_k)$,
\[
m_a = \min \{ m_a (z) \mid z \in \mathsf Z (B_k) \} \,, \
C_k = \prod_{a \in \supp (V_k)} a^{\ord (a) m_a}, \quad \text{and} \quad D_k = C_k^{-1} B_k \,.
\]
Since there is a constant $M_3 \in \N$ (not depending on $k$) such that $\mathsf v_a (B_k) - \ord (a) m_a \le M_3$ for all $a \in \supp (V_k)$, there is a constant $M_2 \in \N$ (not depending on $k$) such that
$|D_k| = |B_k| - |C_k| \le M_2$. \qed{\text{\rm (Proof of {\bf A3})}}

\smallskip
Since $C_k \in \mathcal B (\supp (V_k))$, $\mathsf L (C_k)$ is an arithmetical progression with difference $n_2-2$ and by {\bf A3} we have
\[
\mathsf L (B_k) = \mathsf L (C_k) + \mathsf L (D_k) = \bigcup_{m \in \mathsf L (D_k)} \big(m + \mathsf L (C_k) \big) \,.
\]
Assume to the contrary that $\mathsf L (D_k)=\{m\}$. Then $-m+L_k=-m+\mathsf L (B_k) = \mathsf L (C_k) \in \mathcal L (C_{n_2})$, a contradiction to  Proposition \ref{3.6}.2. This implies that $|\mathsf L (D_k)| > 1$. Since $\supp (C_k) \subset \supp (V_k) \subset \{-g, g\} \cup A_k$, where $A_k$ is half-factorial and $\langle g \rangle \cap \langle A_k \rangle = \{0\}$, it follows that $C_k = C_k' C_k''$, with $C_k' \in \mathcal B (\{ g, -g\} )$, $C_k'' \in \mathcal B (A_k)$, $\mathsf L (C_k) = \mathsf L (C_k') + \mathsf L (C_k'')$, and $|\mathsf L (C_k'')| = 1$. Thus, if $\mathsf L (C_k'') = \{m_k\}$, then
\[
\mathsf L (C_k) = \mathsf L (C_k' 0^{m_k}) \quad \text{and} \quad \mathsf L (B_k) = \mathsf L (C_k D_k) = \mathsf L (C_k' 0^{m_k}D_k) \,.
\]
Therefore, after changing notation if necessary, we suppose from now on that
\[
B_k = C_k D_k, \ \mathsf L (B_k) = \mathsf L (C_k) + \mathsf L (D_k), \ \text{where} \ \supp (C_k) \subset \{0, g, -g\} \ \text{and} \ D_k \in \mathcal B (G) \ \text{with} \ |D_k| \le M_2 \,.
\]

We continue with the following assertion, whose proof follows from Proposition \ref{3.8}.

\begin{enumerate}
\item[{\bf A4.}\,]
\begin{itemize}
\item Let $T \in \mathcal F (G)$ with $T \t D_k$. If $T \in \mathcal A ( \mathcal B_{\langle g \rangle} (G))$, then $\sigma (T) \in \{0,g,-g, (n_1-1)g, -(n_1-1)g \}$.

\item If $z = T_1 \cdot \ldots \cdot T_{\gamma} \in \mathsf Z_{\mathcal B_{\langle g \rangle} (G)} (D_k)$ with $T_1, \ldots, T_{\gamma} \in \mathcal A ( \mathcal B_{\langle g \rangle} (G))$, then at most one of the elements $\sigma (T_1), \ldots, \sigma (T_{\gamma})$ does not lie in $\{0, g, -g\}$.
\end{itemize}
\end{enumerate}

\smallskip
\noindent
We shall use the following notation. If $z = T_1 \cdot \ldots \cdot T_{\gamma}$ is as above, then we set
\[
\sigma (z) = \sigma (T_1) \cdot \ldots \cdot \sigma (T_{\gamma}) \in \mathcal F ( \langle g \rangle ) \,.
\]
We continue with an additional  assertion.

\begin{enumerate}
\item[{\bf A5.}\,]
\[
\mathsf L_{\mathcal B (G)} (B_k) \ \ = \bigcup_{z \in \mathsf Z_{\mathcal B_{\langle g \rangle} (G)} (D_k)} \mathsf L_{\mathcal B (\langle g \rangle)}  \big( C_k \sigma (z) \big) \,, \quad \text{where the union on}
\]
the right hand side consists of at least two distinct sets which are not contained in each other.
\end{enumerate}

\smallskip
{\it Proof of} \,{\bf A5}.\, Assume to the contrary that all sets of lengths on the right hand side are contained in one fixed set $L_1 = L_{\mathcal B (\langle g \rangle)}  \big( C_k \sigma (z^*) \big)$ with $z^* \in \mathsf Z_{\mathcal B_{\langle g \rangle} (G)} (D_k)$. Then $\mathsf L (B_k) \in \mathcal L (C_{n_2})$, a contradiction to Proposition \ref{3.6}.
To show that the set on the left side is in the union on the right side, we choose a factorization $z^* = W_1 \cdot \ldots \cdot W_{\gamma} \in \mathsf Z_{\mathcal B (G)} (B_k)$, where $W_1, \ldots, W_{\gamma} \in \mathcal A (G)$. For each $\nu \in [1, \gamma]$, we set $W_{\nu} = X_{\nu}Y_{\nu}$ where $X_{\nu}, Y_{\nu} \in \mathcal F (G)$ such that
\[
C_k = X_1 \cdot \ldots \cdot X_{\gamma} \quad \text{and} \quad D_k = Y_1 \cdot \ldots \cdot Y_{\gamma} \,.
\]
For each $\nu \in [1, \gamma]$, we have $\sigma (W_{\nu}) = 0 \in G$, hence $\sigma (Y_{\nu}) = - \sigma (X_{\nu}) \in \langle g \rangle$,  $Y_{\nu} \in \mathcal B_{\langle g \rangle} (G)$, and we choose a factorization $z_{\nu} \in \mathsf Z_{\mathcal B_{\langle g \rangle} (G)} (Y_{\nu})$. Then
\[
z = z_1 \cdot \ldots \cdot z_{\gamma}  \in \mathsf Z_{\mathcal B_{\langle g \rangle} (G)} (D_k) \,.
\]
Then, for each $\nu \in [1, \gamma]$, $W_{\nu}' = X_{\nu} \sigma (z_{\nu}) \in \mathcal A (\langle g \rangle)$ and $W_1' \cdot \ldots \cdot W_{\gamma}' = C_k \sigma (z) \in \mathcal F (\langle g \rangle)$. Therefore $z' = W_1' \cdot \ldots \cdot W_{\gamma}' \in \mathsf Z_{\mathcal B (\langle g \rangle)} \big( C_k \sigma (z) \big) $ and
\[
|z^*| = \gamma = |z'| \in \mathsf L_{\mathcal B (\langle g \rangle)}  \big( C_k \sigma (z) \big) \,.
\]
Conversely, let $z = S_1 \cdot \ldots \cdot S_{\beta} \in \mathsf Z_{\mathcal B_{\langle g \rangle} (G)} (D_k)$ and $z' = W_1' \cdot \ldots \cdot W_{\gamma}' \in \mathsf Z_{\mathcal B (\langle g \rangle)} \big( C_k \sigma (z) \big) $ be given, where $S_1, \ldots, S_{\beta} \in \mathcal A (\mathcal B_{\langle g \rangle} (G))$, $W_1', \ldots, W_{\gamma}' \in \mathcal A (\langle g \rangle)$, and we write
\[
\sigma (z) = s_1 \cdot \ldots \cdot s_{\beta}, \ \text{where} \ s_1 = \sigma (S_1), \ldots, s_{\beta} = \sigma (S_{\beta})  \,.
\]
Note that $s_1, \ldots, s_{\beta}$ satisfy the properties given in {\bf A4}.

\smallskip
\noindent
{\it Claim:} We can find a renumbering such that
\[
W_{\nu}' = s_{\nu}T_{\nu} \quad \text{with} \quad T_{\nu} \in \mathcal F (\{-g, g\}) \quad \text{for all} \ \nu \in [1, \beta] \,.
\]
\smallskip
\noindent
{\it Proof of the Claim.} We proceed in three steps.

First, we may assume without restriction that $s_1= \ldots = s_{\delta}=0$ and $0 \notin \{s_{\delta+1}, \ldots, s_{\beta}\}$. Then at least $\delta$ of the $W_1', \ldots, W_{\gamma}'$ are equal to $0$. After renumbering if necessary, we may suppose that $W_1'= \ldots = W_{\delta}'=0$, and we set $T_1= \ldots = T_{\delta} = 1 \in \mathcal F (\{-g, g\})$.

Second, suppose there is a $\nu \in [\delta+1, \beta]$ such that $s_{\nu} \in \{(n_1-1)g, n_2-(n_1-1)g\}$, say $\nu = \delta+1$. Then $s_{\delta+1}$ divides (in $\mathcal F (G)$) one element of $\{W_{\delta+1}', \ldots, W_{\gamma}'\}$, say $W_{\delta+1}'$. Then we set $T_{\delta+1} = s_{\delta+1}^{-1} W_{\delta+1}' \in \mathcal F (\{-g, g\})$.

To handle the last step, we observe that, by {\bf A4}, all remaining $s_{\nu}$ lie in $\{-g, g\}$. Since $\beta \le |D_k| \le M_2$ and the multiplicities of $g$ and of $-g$ in $C_k$ are growing with $k$, and $k$ is sufficiently large, for each $\nu \le \beta$ the product $\prod_{\lambda = \nu}^{\gamma} W_{\lambda}'$ is divisible by $g$ and by $-g$. Thus we can pick a suitable $W_{\nu}'$ and the assertion follows. \qed

Now we define
\[
W_{\nu}'' = \begin{cases} S_{\nu}T_{\nu} & \text{for each } \ \nu \in [1, \beta], \\
 W_{\nu}' & \text{for each } \ \nu \in [\beta+1, \gamma] \,.
 \end{cases}
\]
Then, by construction, we have $B_k = W_1'' \cdot \ldots \cdot W_{\gamma}'' $. Let $\nu \in [1, \beta]$. Since $W_{\nu}' \in \mathcal A (\langle g \rangle)$, it follows that $T_{\nu}' \in \mathcal F (\langle g \rangle)$ is zero-sum free. Since $S_{\nu} \in \mathcal A ( \mathcal B_{\langle g \rangle} (G))$, it follows that $W_{\nu}'' \in \mathcal A (G)$. Thus $W_1'', \ldots, W_{\beta}'' \in \mathcal A (G)$, and we have constructed a factorization of $B_k$ of length $\gamma =|z'|$.
\qed{(Proof of {\bf A5})}

\smallskip
\begin{enumerate}
\item[{\bf A6.}\,]  Let
\[
z = T_1 \cdot \ldots \cdot T_{\gamma} \in \mathsf Z_{\mathcal B_{\langle g \rangle} (G)} (D_k)\quad \text{and} \quad
z' = T_1' \cdot \ldots \cdot T_{\gamma'}' \in \mathsf Z_{\mathcal B_{\langle g \rangle} (G)} (D_k)\,,
\]
where $\gamma, \gamma' \in \N$, $T_1, \ldots , T_{\gamma} ,T_1' , \ldots , T_{\gamma'}' \in \mathcal A ( \mathcal B_{\langle g \rangle} (G) )$, and $z\ne z'$. Furthermore, let
\[
F = C_k \sigma (T_1) \cdot \ldots \cdot \sigma (T_{\gamma}) \in \mathcal B (  \langle g \rangle) \quad \text{and} \quad
F' = C_k \sigma (T_1') \cdot \ldots \cdot \sigma (T_{\gamma'}') \in \mathcal B (  \langle g \rangle) \,,
\]
and  define
\[
F = S F_1 \ \text{and} \ F' = S F_2 , \quad \text{where} \quad S, F_1, F_2 \in \mathcal F ( \langle g \rangle) \ \text{and} \ S = {\gcd}_{\mathcal F ( \langle g \rangle)} (F, F') \,.
\]
Then one of the following statements holds:
\begin{itemize}
\item[(i)] $\mathsf d (z, z') \ge n_1 - 1$.

\item[(ii)] $\{F_1, F_2\}  = \{ \big( (-g)g \big)^{v},  0^{v} \}$ with $v \in \N$. 
\end{itemize}
\end{enumerate}

\smallskip
{\it Proof of} \,{\bf A6}.\,
Note that $\gcd (F_1, F_2)=1$, $\sigma (F_1) = \sigma (F_2) = - \sigma (S)$, $C_k \t S$, and
\[
\mathsf d_{\mathsf Z (\mathcal B_{\langle g \rangle} (G))} (z, z') \ge \mathsf d_{\mathcal F ( \langle g \rangle)} (F, F') = \mathsf d_{\mathcal F ( \langle g \rangle)} (F_1, F_2) = \max \{ |F_1|, |F_2| \} \,. \tag{$*$}
\]
Since $|F|=|C_k|+|z|$, $|F_1|+|S|=|C_k|+|z|$, $|F_2|+|S|=|C_k|+|z'|$, we obtain $|F_2|-|F_1|=|z'|-|z|$ and
\[
\mathsf d (z, z') \ge \big| |z|-|z'| \big| +2 = \big| |F_1|-|F_2| \big|+2 \,. \tag{$**$}
\]
Using $(*)$ and $(**)$  we observe that   $\max \{ |F_1|, |F_2| \} \ge n_1-1$ as well as  $\big| |F_1|-|F_2| \big| \ge n_1-3$ implies (i).
To simplify the discussion, we suppose that $\max \{ |F_1|, |F_2| \} \le n_1-1$ (of course we could also assume that $\max \{ |F_1|, |F_2| \} \le n_1-2$; the slightly weaker assumption allows us to give a more complete description of $(F_1, F_2)$ without additional efforts).
Based on the structural description of $\sigma (z)$ and $\sigma (z')$ given in {\bf A4} we distinguish  four cases.

\medskip
\noindent CASE 1: \ $\sigma (z) \sigma (z') \in \mathcal F ( \{0,g,-g\})$.

We set
\[
S =  \big( g^{n_2} \big)^{k_1} \big( (-g)^{n_2} \big)^{k_2}  \big( (-g)g \big)^{k_3}  \big( \delta g \big)^{k_4} 0^{k_5} \,,
\]
where $\delta \in \{-1, 1\}$, $k_1, \ldots, k_5 \in \N_0$ and $k_3 < n_2$.
We  distinguish two cases.

\medskip
\noindent CASE 1.1: \ $F_1=1$ or $F_2=1$, say $F_2=1$.

Then $\sigma (S)=0$, $\sigma (F_1)=0$, and $k_4=0$. We have $F_1= 0^{\mathsf v_0(F_1)} \big( (-g)g \big)^{\mathsf v_g (F_1)}$ and $|F_1|>0$. Then
\[
\min \mathsf L (SF_2) = \min \mathsf L (S)=k_1+k_2+k_3+k_5 \quad \text{and} \quad \min \mathsf L (S F_1) = k_1+k_2+k_3+k_5 + \mathsf v_0 (F_1) + \mathsf v_g (F_1) - \epsilon (n_2-2)
\]
where
\[
\epsilon = \begin{cases} 0 & k_3+\mathsf v_g (F_1) < n_2 \\ 1 & \text{otherwise} \end{cases} \,.
\]
Thus $\mathsf v_0 (F_1)+\mathsf v_g (F_1)$ is congruent to $\min \mathsf L (S F_1) - \min \mathsf L (S F_2)$ modulo $n_2-2$ and hence congruent either to $0$ or to $n_1-2$ or to $(n_2-2)-(n_1-2) = n_2-n_1$ modulo $n_2-2$. Since $0 < |F_1| = \mathsf v_0 (F_1)+2\mathsf v_g (F_1) \le n_1-1$, it follows that $(\mathsf v_0 (F_1), \mathsf v_g (F_1) ) \in \{ (n_1-2, 0), \{n_1-3,1)\}$, hence $|F_1|-|F_2|= |F_1| \ge n_1-2$, and thus (i) holds.

\medskip
\noindent CASE 1.2: \ $F_1 \ne 1$ and $F_2 \ne 1$.

By symmetry we may suppose that $0 \nmid F_1$. Then $g \t F_1$ or $(-g) \t F_1$, and by symmetry we may suppose that $g \t F_1$. We distinguish two cases.

\smallskip
\noindent CASE 1.2.1: \ $(-g) \t F_1$.

Then $F_2 = 0^{\mathsf v_0 (F_2)}$, and hence $\sigma (S) = 0 = \sigma (F_1)$. This implies $k_4= 0$ and $F_1 = \big( (-g) g \big)^{\mathsf v_g (F_1)}$. Then
\[
\min \mathsf L (S F_2) = k_1+k_2+k_3+\mathsf v_0 (F_2)+k_5 \quad \text{and} \quad
\min \mathsf L (S F_1) = k_1+k_2+k_3+\mathsf v_g (F_1)-\epsilon (n_2-2) + k_5
\]
where $\epsilon \in \{0,1\}$. Thus $\mathsf v_0 (F_2)-\mathsf v_g (F_1)$ is congruent to $\min \mathsf L (S F_1) - \min \mathsf L (S F_2)$ modulo $n_2-2$ and hence congruent either to $0$ or to $n_1-2$ or to $n_2-n_1$ modulo $n_2-2$. This implies that
either
\[
\mathsf v_0 (F_2) = \mathsf v_g (F_1) \quad \text{or} \quad \mathsf v_0 (F_2) = \mathsf v_g (F_1)+n_1-2 \quad \text{or} \quad \mathsf v_g (F_1) = \mathsf v_0 (F_2) + n_1-2 \,.
\]

If $\mathsf v_0 (F_2) = \mathsf v_g (F_1)+n_1-2$, then $\mathsf v_g (F_1) \ge 1$ implies that  $|F_2| \ge \mathsf v_0 (F_2) \ge  n_1-1$, and hence (i) holds.

If $\mathsf v_g (F_1) = \mathsf v_0 (F_2) + n_1-2$, then $\mathsf v_0 (F_2) \ge 1$ implies that $\mathsf v_g (F_1) \ge n_1-1$ whence $|F_1| =2 \mathsf v_g (F_1) \ge 2(n_1-1) > n_1$, a contradiction.
If $\mathsf v_0 (F_2) = \mathsf v_g (F_1)$, then (ii) holds.

\smallskip
\noindent CASE 1.2.2: \ $(-g) \nmid F_1$.

Then $F_1= g^{\mathsf v_g (F_1)}$ and $F_2 = (-g)^{\mathsf v_{-g} (F_2)} 0 ^{\mathsf v_0 (F_2)}$. Note that $\mathsf v_g (F_1)+ \mathsf v_{-g}(F_2)>0$, $\mathsf v_g (F_1), \mathsf v_{-g}(F_2) \in [0, n_1-1]$, and $n_2 \ge 2n_1$. However, $\sigma (F_1)=\sigma (F_2)$ implies that $\mathsf v_g (F_1)+\mathsf v_{-g} (F_2) \equiv 0 \mod n_2$, a contradiction.

\medskip
\noindent CASE 2: \ $\sigma (z)\sigma (z') \in \big((n_1-1)g \big) \mathcal F (\{0,-g,g\})$ \ or \ $\sigma (z)\sigma (z') \in \big(-(n_1-1)g \big) \mathcal F (\{0,-g,g\})$.

After applying the group automorphism which sends each $h \in G$ onto its negative, if necessary, we may suppose that $\sigma (z)\sigma (z') \in \big((n_1-1)g \big) \mathcal F (\{0,-g,g\})$.
After exchanging $z$ and $z'$, if necessary, we may suppose that $\sigma (z) \in  \mathcal F (\{0,-g,g\})$ and $\sigma (z')  \in \big((n_1-1)g \big) \mathcal F (\{0,-g,g\})$.
We set
\[
S =  \big( g^{n_2} \big)^{k_1} \big( (-g)^{n_2} \big)^{k_2}  \big( (-g)g \big)^{k_3}  \big( \delta g \big)^{k_4} 0^{k_5} \,,
\]
where $\delta \in \{-1, 1\}$, $k_1, \ldots, k_5 \in \N_0$ and $k_3 < n_2$. If $\sigma (F_1)=0$, then $\sigma (F_2)=0$ and hence $|F_2| \ge n_1$, a contradiction. Thus it follows that $\sigma (F_1) \ne 0$, and hence there are the following three cases.

\medskip
\noindent CASE 2.1: \ $g \t F_1$ and $(-g) \t F_1$.

It follows that $F_2= \big( (n_1-1)g \big) 0^{\mathsf v_0 (F_2)}$ and hence $\sigma (F_2) = (n_1-1)g = \sigma (F_1) = \big( \mathsf v_g (F_1) - \mathsf v_{-g} (F_1) \big) g$, a contradiction to $|F_1| \le n_1-1$.

\medskip
\noindent CASE 2.2: \ $g \t F_1$ and $(-g) \nmid F_1$.

Then $F_1 = g^{\mathsf v_g (F_1)} 0^{\mathsf v_0 (F_1)}$, $F_2 = \big( (n_1-1)g \big) (-g)^{n_1-1 - \mathsf v_g (F_1)} 0^{\mathsf v_0 (F_2)}$, and we can write $SF_1$ and $SF_2$ as follows:
\[
\begin{aligned}
SF_1 & = \big( g^{n_2} \big)^{l_1} \big( (-g)^{n_2}\big)^{l_2} \big( (-g)g \big)^{l_3} 0^{l_4 + \mathsf v_0 (F_1)} \\
SF_2 & = \big( g^{n_2} \big)^{l_1} \big( (-g)^{n_2}\big)^{l_2} \big( (-g)g \big)^{l_3- \mathsf v_g (F_1)} 0^{l_4 + \mathsf v_0 (F_2)} \Big( \big((n_1-1)g\big) (-g)^{n_1-1} \Big)
\end{aligned}
\]
where $l_1, \ldots, l_4 \in \N_0$, $l_4 = \mathsf v_0 (S)$, and $l_3 \ge \mathsf v_g (F_1)$ (the last inequality holds because $k$ is large enough). Therefore
\[
m_1  = l_1+l_2+l_3+l_4+\mathsf v_0 (F_1) \in L_k \ \ \text{and} \ \
m_2  = l_1+l_2+l_3-\mathsf v_g (F_1) + l_4+\mathsf v_0 (F_2) + 1 \in L_k
\]
which implies that $m_1-m_2 = \mathsf v_0 (F_1)+\mathsf v_g (F_1)-\mathsf v_0 (F_2)-1$ is congruent to either $0$ or to $n_1-2$ or to $n_2-n_1$ modulo $n_2-2$. We distinguish three cases.

\smallskip
\noindent CASE 2.2.1: \  $\mathsf v_0 (F_1)+\mathsf v_g (F_1) \equiv \mathsf v_0 (F_2)+1 \mod n_2-2$.

Since $|F_1| < n_1$ and $|F_2| < n_1$, it follows that $\mathsf v_0 (F_1)+\mathsf v_g (F_1) = \mathsf v_0 (F_2)+1$. Since
\[
|F_2|  = \mathsf v_0 (F_2)+1+ \big( n_1-1-\mathsf v_g (F_1) \big)
  = \mathsf v_0 (F_1) + \mathsf v_g (F_1) + \big( n_1-1-\mathsf v_g (F_1) \big) = n_1-1+\mathsf v_0 (F_1) \,,
\]
it follows that $|F_2| \ge n_1-1$ and hence (i) holds.

\smallskip
\noindent CASE 2.2.2: \ $\mathsf v_0 (F_1)+\mathsf v_g (F_1) \equiv \mathsf v_0 (F_2)+n_1-1 \mod n_2-2$.

Similarly,  we obtain that $\mathsf v_0 (F_1)+\mathsf v_g (F_1) = \mathsf v_0 (F_2)+n_1-1$. Thus $|F_1| \ge n_1-1$ and hence (i) holds.

\smallskip
\noindent CASE 2.2.3: \ $\mathsf v_0 (F_1)+\mathsf v_g (F_1) \equiv n_2-n_1+ \mathsf v_0 (F_2)+1 \mod n_2-2$.

We obtain that $\mathsf v_0 (F_1)+\mathsf v_g (F_1) = -(n_1-3)+\mathsf v_0 (F_2)$ which implies that $\mathsf v_0 (F_2) > n_1-3$. Therefore
\[
|F_2|  = \mathsf v_0 (F_2)+1+n_1-1- \mathsf v_g (F_1) = n_1+\mathsf v_0 (F_1)+(n_1-3)-\mathsf v_0 (F_2)+\mathsf v_0(F_2)
  = 2n_1-3+\mathsf v_0 (F_1) \ge n_1 \,,
\]
a contradiction.

\medskip
\noindent CASE 2.3: \  $g \nmid F_1$ and $(-g) \t F_1$.

Then
\[
F_1= (-g)^{\mathsf v_{-g} (F_1)} 0^{\mathsf v_0 (F_1)} \quad \text{ and} \quad
F_2 = \big( (n_1-1)g \big) g^{n_2-(n_1-1)-\mathsf v_{-g} (F_1)} 0^{\mathsf v_0 (F_2)} \,.
\]
We can write $SF_1$ and $SF_2$ as
\[
\begin{aligned}
SF_1 & = \big( g^{n_2} \big)^{l_1} \big( (-g)^{n_2}\big)^{l_2} \big( (-g)g \big)^{l_3} 0^{l_4 + \mathsf v_0 (F_1)} \quad \text{and}\\
SF_2 & = \big( g^{n_2} \big)^{l_1} \big( (-g)^{n_2}\big)^{l_2} \big( (-g)g \big)^{l_3- \mathsf v_{-g} (F_1)} 0^{l_4 + \mathsf v_0 (F_2)} \Big(   g^{n_2-(n_1-1) } \big( (n_1-1)g \big) \Big)
\end{aligned}
\]
where $l_1, \ldots, l_4 \in \N_0$, $l_4 = \mathsf v_0 (S)$, and $l_3 \ge \mathsf v_{-g} (F_1)$ (the last inequality holds because $k$ is large enough). Therefore
\[
m_1 = l_1+l_2+l_3+l_4+\mathsf v_0 (F_1) \in L_k \ \ \text{and} \ \ m_2 = l_1+l_2+l_3-\mathsf v_{-g} (F_1)+l_4+\mathsf v_0 (F_2)+1 \in L_k \,.
\]
which implies that $m_1-m_2 = \mathsf v_0 (F_1)+\mathsf v_{-g} (F_1)-\mathsf v_0 (F_2)-1$ is congruent to either $0$ or to $n_1-2$ or to $n_2-n_1$ modulo $n_2-2$. We distinguish three cases.

\smallskip
\noindent CASE 2.3.1: \  $\mathsf v_0 (F_1)+\mathsf v_{-g} (F_1) \equiv \mathsf v_0 (F_2)+1 \mod n_2-2$.

We obtain that $\mathsf v_0 (F_1)+\mathsf v_{-g} (F_1) = \mathsf v_0 (F_2)+1$ and hence
\[
|F_2|=1+\mathsf v_0 (F_2)+n_2-(n_1-1)-\mathsf v_{-g}(F_1) = \mathsf v_0 (F_1)+n_2-(n_1-1) \ge n_1 \,, \ \ \text{a contradiction.}
\]

\smallskip
\noindent CASE 2.3.2: \ $\mathsf v_0 (F_1)+\mathsf v_{-g} (F_1) \equiv \mathsf v_0 (F_2)+n_1-1 \mod n_2-2$.

We obtain that $\mathsf v_0 (F_1)+\mathsf v_{-g} (F_1) = \mathsf v_0 (F_2)+n_1-1$. Therefore $|F_1| \ge n_1-1$ and hence (i) holds.

\smallskip
\noindent CASE 2.3.3: \ $\mathsf v_0 (F_1)+\mathsf v_{-g} (F_1) \equiv n_2-n_1+ \mathsf v_0 (F_2)+1 \mod n_2-2$.

We obtain that $\mathsf v_0 (F_1)+\mathsf v_{-g} (F_1) =  \mathsf v_0 (F_2) - n_1+3$ and therefore
\[
|F_2|  = \mathsf v_0 (F_2) +1 + n_2-(n_1-1)-\mathsf v_{-g} (F_1) = \mathsf v_0(F_1)+n_1-3+1+n_2-(n_1-1)
 = \mathsf v_0 (F_1)-1+n_2 \ge n_1 \,,
\]
a contradiction.

\medskip
\noindent CASE 3: \ $\sigma (z)\sigma (z') \in \big((n_1-1)g \big)^2 \mathcal F (\{0,-g,g\})$ \ or \ $\sigma (z)\sigma (z') \in \big(-(n_1-1)g \big)^2 \mathcal F (\{0,-g,g\})$.

After applying the group automorphism which sends each $h \in G$ onto its negative, if necessary, we may suppose that $\sigma (z)\sigma (z') \in \big((n_1-1)g \big)^2 \mathcal F (\{0,-g,g\})$, whence $\sigma (z) \in \big((n_1-1)g \big) \mathcal F (\{0,-g,g\})$ and $\sigma (z') \in \big((n_1-1)g \big) \mathcal F (\{0,-g,g\})$.

We set
\[
S =  \big( (n_1-1)g \big) \big( g^{n_2} \big)^{k_1} \big( (-g)^{n_2} \big)^{k_2}  \big( (-g)g \big)^{k_3}  \big( \delta g \big)^{k_4} 0^{k_5} \,,
\]
where $\delta \in \{-1, 1\}$, $k_1, \ldots, k_5 \in \N_0$ and $k_3 < n_2$. We distinguish two cases.

\medskip
\noindent CASE 3.1: \ $F_1=1$ or $F_2=1$, say $F_2=1$.

Since $F_2=1$, it follows that $\mathsf L (S) \subset L_k$. Let $l_1 \in \mathsf L (F_1)$. Then $l_1+\mathsf L (S) \subset L_k$ and hence $l_1$ is congruent either to $0$ or to $n_1-2$ or to $n_2-n_1$ modulo $n_2-2$. Since $l_1> 0$, it follows that $|F_1|-|F_2|=|F_1| \ge n_1-2$, and hence (i) holds.

\medskip
\noindent CASE 3.2: \ $F_1 \ne 1$ and  $F_2 \ne 1$.

We have  $0 \nmid F_1$ or  $0 \nmid F_2$, say $0 \nmid F_1$. Then $g \t F_1$ or $(-g) \t F_1$. We distinguish three cases.

\smallskip
\noindent CASE 3.2.1: \ $g \t F_1$ and $(-g) \nmid F_1$.

Then $F_1= g^{\mathsf v_g (F_1)}$ and $F_2 = (-g)^{\mathsf v_{-g} (F_2)} 0^{\mathsf v_0(F_2)}$. Since $\sigma (F_1)=\sigma (F_2)$, it follows that $\mathsf v_g (F_1)+\mathsf v_{-g} (F_2) \equiv 0 \mod n_2$, and hence
\[
\max \{ \mathsf v_g (F_1), \mathsf v_{-g} (F_2) \} \ge \frac{n_2}{2} \ge n_1 \,,
\]
a contradiction.

\smallskip
\noindent CASE 3.2.2: \ $g \t F_1$ and $(-g) \t F_1$.

Then $(-g)g \t F_1$ whence $F_2=0^{\mathsf v_0 (F_2)}$. This implies that $0 = \sigma (F_2)=\sigma (F_1)$ and thus $F_1= \big( (-g)g \big)^{\mathsf v_g (F_1)}$. As above it follows that $\mathsf v_g (F_1)-\mathsf v_0 (F_2)$ is congruent either to $0$ or to $n_1-2$ or to $n_2-n_1$ modulo $n_2-2$.

If $\mathsf v_g (F_1)=\mathsf v_0 (F_2)$, then (ii) holds.

If $\mathsf v_g (F_1)=\mathsf v_0 (F_2)+n_1-2$, then $|F_1|=2\mathsf v_g (F_1) \ge 2n_1-4 \ge n_1$, a contradiction.

Suppose that $\mathsf v_g (F_1)-\mathsf v_0 (F_2) \equiv n_2-n_1 \mod n_2-2$. Then $|F_1|<n_1$ implies that $\mathsf v_g (F_1)-\mathsf v_0 (F_2)=-n_1+2$. Since $\mathsf v_g (F_1) \ge 1$, it follows that  $|F_2| \ge \mathsf v_0 (F_2) \ge n_1-1$, and hence (i) holds.

\smallskip
\noindent CASE 3.2.3: \ $g \nmid F_1$ and $(-g) \t F_1$.

Then $F_1= (-g)^{\mathsf v_{-g} (F_1)}$ and $F_2 = g^{\mathsf v_{g} (F_2)} 0^{\mathsf v_0(F_2)}$. Since $\sigma (F_1)=\sigma (F_2)$, it follows that $\mathsf v_{-g} (F_1)+\mathsf v_{g} (F_2) \equiv 0 \mod n_2$, and hence
\[
\max \{ \mathsf v_{-g} (F_1), \mathsf v_{g} (F_2) \} \ge \frac{n_2}{2} \ge n_1 \,,
\]
a contradiction.

\medskip
\noindent CASE 4: \ $\sigma (z)\sigma (z') \in \big((n_1-1)g \big) \big(-(n_1-1)g \big) \mathcal F (\{0,-g,g\})$.

After exchanging $z$ and $z'$ if necessary we may suppose that $\sigma (z) \in \big((n_1-1)g \big)  \mathcal F (\{0,-g,g\})$ and $\sigma (z') \in \big(-(n_1-1)g \big)  \mathcal F (\{0,-g,g\})$.
We set
\[
SF_1 = \big( g^{n_2})^{l_1} \big( (-g)^{n_2} \big)^{l_2} \big( (g(-g) \big)^{l_3} \big( (n_1-1)g (-g)^{n_1-1} \big) 0^{l_4 + \mathsf v_0 (F_1)}
\]
and
\[
SF_2 = \big( g^{n_2})^{l_1'} \big( (-g)^{n_2} \big)^{l_2'} \big( (g(-g) \big)^{l_3'} \Big( \big(-(n_1-1)g\big) g^{n_1-1} \Big) 0^{l_4 + \mathsf v_0 (F_2)}
\]
where $l_1, l_1', \ldots, l_3, l_3', l_4 \in \N_0$.
Since
\[
F_1 = \big((n_1-1)g \big) g^{\mathsf v_g (F_1)} (-g)^{\mathsf v_{-g} (F_1)} 0^{\mathsf v_0 (F_1)} \quad \text{and} \quad F_2 = \big(-(n_1-1)g \big) g^{\mathsf v_g (F_2)} (-g)^{\mathsf v_{-g} (F_2)} 0^{\mathsf v_0 (F_2)} \,,
\]
it follows that
\[
\big( n_1-1+ \mathsf v_g (F_1) - \mathsf v_{-g} (F_1) \big) g = \sigma (F_1) = \sigma (F_2) = \big( -n_1+1+\mathsf v_g (F_2) - \mathsf v_{-g} (F_2) \big) g
\]
and hence
\[
2n_1 - 2 \equiv \big( \mathsf v_g (F_2) - \mathsf v_g (F_1) \big) + \big( \mathsf v_{-g}(F_1) - \mathsf v_{-g} (F_2) \big) \mod n_2 \,.
\]
We distinguish four cases.

\medskip
\noindent CASE 4.1: \ $g \t F_1$ and $(-g) \t F_1$.

Then $\mathsf v_g (F_2)=0=\mathsf v_{-g} (F_2)$ and hence
\[
2n_1-2 \equiv - \mathsf v_g (F_1) + \mathsf v_{-g} (F_1) \mod n_2 \,.
\]
If $n_2 \ge 3n_1$, then $|F_1| \ge n_1$, a contradiction. Thus $n_2 = 2n_1$, $\mathsf v_{-g} (F_1) +2 \equiv \mathsf v_g (F_1) \mod n_2$, and so $\mathsf v_{-g} (F_1) +2 = \mathsf v_g (F_1)$. Therefore we obtain that
\[
SF_2= \big( g^{n_2})^{l_1} \big( (-g)^{n_2} \big)^{l_2} \big( (g(-g) \big)^{l_3 -\mathsf v_{g}(F_1)-(n_1-1)}(-g)^{n_2} \Big( \big(-(n_1-1)g\big) g^{n_1-1} \Big) 0^{l_4 + \mathsf v_0 (F_2)}
\]
Therefore
\[
m_1= l_1+l_2+l_3+l_4+1+\mathsf v_0 (F_1) \in L_k \ \text{and} \
m_2= l_1+l_2+l_3+l_4-(\mathsf v_g (F_1)+n_1-1)+2+\mathsf v_0 (F_2) \in L_k
\]
which implies that $m_1-m_2=\mathsf v_0 (F_1)-\mathsf v_0 (F_2)+\mathsf v_g (F_1)+n_1-2$ is congruent to either $0$ or to $n_1-2$ or to $n_2-n_1$ modulo $n_2-2$. We distinguish three cases.

\smallskip
\noindent CASE 4.1.1: \ $\mathsf v_0 (F_1)+\mathsf v_g (F_1)+n_1 \equiv \mathsf v_0 (F_2) + 2 \mod n_2-2$.

The left and the right hand side cannot be equal, since $\mathsf v_g (F_1) \ge 2$ would imply that $|F_2| \ge \mathsf v_0(F_2) \ge n_1$. Therefore we have
\[
\mathsf v_0 (F_1)+\mathsf v_g (F_1)+n_1 = \mathsf v_0 (F_2) + n_2
\]
and thus $|F_1| \ge \mathsf v_0 (F_1)+\mathsf v_g (F_1) \ge n_2-n_1=n_1$, a contradiction.

\smallskip
\noindent CASE 4.1.2: \ $\mathsf v_0 (F_1)+\mathsf v_g (F_1)+n_1 \equiv \mathsf v_0 (F_2) + n_1 \mod n_2-2$.

This implies that $\mathsf v_0 (F_1)+\mathsf v_g (F_1) = \mathsf v_0 (F_2)$ whence $\mathsf v_0 (F_2) \ge \mathsf v_g (F_1)\ge 2$, $\mathsf v_0 (F_1)=0$, and $\mathsf v_g (F_1) = \mathsf v_0(F_2)$. Therefore we obtain $F_1 = \big((n_1-1)g \big) g^{\mathsf v_0 (F_2)} (-g)^{\mathsf v_{0} (F_2)-2}$ and $F_2 = \big(-(n_1-1)g \big)  0^{\mathsf v_0 (F_2)}$.
Now consider a factorization $z_1$ of $SF_1$ which is divisible by the atom $X =  \big( (n_1-1)g\big) g^{n_1+1}$ and by $\big( g(-g) \big)^{\mathsf v_0 (F_2)-2}$. It gives rise to a factorization
\[
z_2= z_1 X^{-1} \big( g(-g) \big)^{-(\mathsf v_0 (F_2)-2)} \Big( \big( -(n_1-1)g \big)g^{n_1-1}\Big) 0^{\mathsf v_0 (F_2)} \in \mathsf Z (S F_2)
\]
of length $|z_2|=|z_1|-(1+\mathsf v_0(F_2)-2)+1+\mathsf v_0(F_2)=|z_1|+2$. Since $n_1 \ge 5$ and $\min \Delta (L_k) = \min \{n_1-2, n_2-n_1\} \ge 3$, $L_k$ cannot contain the lengths $|z_1|$ and $|z_1|+2=|z_2|$, a contradiction.

\smallskip
\noindent CASE 4.1.3: \ $\mathsf v_0 (F_1)+\mathsf v_g (F_1) \equiv \mathsf v_0 (F_2) + 2 \mod n_2-2$.

This implies that $\mathsf v_0 (F_1)+\mathsf v_g (F_1) = \mathsf v_0 (F_2) + 2$. Since $\mathsf v_g (F_1) \ge 3$, it follows that $\mathsf v_0 (F_2)>0$ and hence $\mathsf v_0 (F_1)=0$.
Therefore we obtain $F_1 = \big((n_1-1)g \big) g^{\mathsf v_0 (F_2)+2} (-g)^{\mathsf v_{0} (F_2)}$ and $F_2 = \big(-(n_1-1)g \big)  0^{\mathsf v_0 (F_2)}$.
Now consider a factorization $z_2$ of $SF_2$ which is divisible by the atom $X= \big(-(n_1-1)g\big) (-g)^{n_1+1}$. It gives rise to a factorization
\[
z_1 = z_2X^{-1}0^{-\mathsf v_0 (F_2)}  \big( (n_1-1)g (-g)^{n_1-1} \big) \big( (-g)g \big)^{\mathsf v_0(F_2)+2}
\]
of length $|z_1|=|z_2|+2$, a contradiction.

\medskip
\noindent CASE 4.2: \ $g \t F_1$ and $(-g) \nmid F_1$.

Then $\mathsf v_g (F_2)=0=\mathsf v_{-g}(F_1)$, hence
\[
n_2-2n_1+2 \equiv \mathsf v_g (F_1)+ \mathsf v_{-g} (F_2) \mod n_2
\]
and thus  $n_2-2n_1+2 = \mathsf v_g (F_1)+ \mathsf v_{-g} (F_2)$. Furthermore, we obtain that
\[
\max \{ \mathsf v_g (F_1), \mathsf v_{-g} (F_2) \} \ge \frac{n_2-2n_1+2}{2} \,,
\]
and hence $n_2 \in \{2n_1, 3n_2\}$. We obtain that
\[
SF_2 = \big( g^{n_2})^{l_1} \big( (-g)^{n_2} \big)^{l_2} \big( (g(-g) \big)^{l_3-\mathsf v_g (F_1)-(n_1-1)} (-g)^{n_2} \Big( \big(-(n_1-1)g\big) g^{n_1-1} \Big) 0^{l_4 + \mathsf v_0 (F_2)}
\]
Therefore $m_1= l_1+l_2+l_3+l_4+1+\mathsf v_0 (F_1) \in L_k$ and
\[
m_2= l_1+l_2+l_3+l_4-(\mathsf v_g (F_1)+n_1-1)+2+\mathsf v_0 (F_2) \in L_k
\]
which implies that $m_1-m_2=\mathsf v_0 (F_1)-\mathsf v_0 (F_2)+\mathsf v_g (F_1)+n_1-2$ is congruent to either $0$ or to $n_1-2$ or to $n_2-n_1$ modulo $n_2-2$. We distinguish three cases.

\smallskip
\noindent CASE 4.2.1: \ $\mathsf v_0 (F_1)+\mathsf v_g (F_1)+n_1 \equiv \mathsf v_0 (F_2) + 2 \mod n_2-2$.

The left and the right hand side cannot be equal, because otherwise we would have $|F_2| \ge \mathsf v_0(F_2) +1 \ge n_1$. Therefore we have
\[
\mathsf v_0 (F_1)+\mathsf v_g (F_1)+n_1 = \mathsf v_0 (F_2) + n_2
\]
and thus $|F_1| \ge \mathsf v_0 (F_1)+\mathsf v_g (F_1) \ge n_2-n_1 \ge n_1$, a contradiction.

\smallskip
\noindent CASE 4.2.2: \ $\mathsf v_0 (F_1)+\mathsf v_g (F_1)+n_1 \equiv \mathsf v_0 (F_2) + n_1 \mod n_2-2$.

This implies that $\mathsf v_0 (F_1)+\mathsf v_g (F_1) = \mathsf v_0 (F_2)$ whence $\mathsf v_0 (F_2) \ge \mathsf v_g (F_1)\ge 1$, $\mathsf v_0 (F_1)=0$, and $\mathsf v_g (F_1) = \mathsf v_0(F_2)$. Therefore we obtain $F_1 = \big((n_1-1)g \big) g^{\mathsf v_0 (F_2)} $ and $F_2 = \big(-(n_1-1)g \big) (-g)^{n_2-2n_1+2-\mathsf v_0(F_2)}  0^{\mathsf v_0 (F_2)}$ and hence
$|F_2|=1+n_2-2n_1+2$ which implies that $n_2=2n_1$ and $|F_2|=3$.  Thus $\mathsf v_0 (F_2) \in \{1,2\}$.

Suppose that $\mathsf v_0 (F_2)=1$. Then $\mathsf v_g (F_1)=1$, $F_1 = \big((n_1-1)g \big) g$,  and  $F_2= \big(-(n_1-1)g \big)(-g)0$. Consider a factorization $z_1$ of $SF_1$ divisible by $X= \big( (n_1-1)g \big) g^{n_1+1}$. This gives rise to a factorization
\[
z_2 = z_1 X^{-1} 0 \big( (-g)g \big) \Big( \big(-(n_1-1)g \big) g^{n_1-1} \Big)
\]
of length $|z_2|=|z_1|+2$, a contradiction.

Suppose that $\mathsf v_0 (F_2)=2$. Then $\mathsf v_g (F_1)=2$, $F_1= \big( (n_1-1)g\big) g^2$, and $F_2 = \big(-(n_1-1)g\big) 0^2$. Consider a factorization $z_1$ of $SF_1$ divisible by $X= \big( (n_1-1)g \big) g^{n_1+1}$. This gives rise to a factorization
\[
z_2 = z_1 X^{-1} 0^2 \Big( \big(-(n_1-1)g \big) g^{n_1-1} \Big)
\]
of length $|z_2|=|z_1|+2$, a contradiction.

\smallskip
\noindent CASE 4.2.3: \ $\mathsf v_0 (F_1)+\mathsf v_g (F_1) \equiv \mathsf v_0 (F_2) + n_2-2n_1+2 \mod n_2-2$.

Suppose that  $n_2=3n_1$. Then $\mathsf v_0 (F_1)+\mathsf v_g (F_1) \equiv \mathsf v_0 (F_2) + n_1+2 \mod n_2-2$, and equality cannot hold because $|F_1| \ge \mathsf v_0 (F_1)+\mathsf v_g (F_1)$. This implies that $(n_2-2) + \mathsf v_0 (F_1)+\mathsf v_g (F_1) = \mathsf v_0 (F_2) + n_1+2$ and hence $2n_1-4+\mathsf v_0 (F_1)+\mathsf v_g (F_1) = \mathsf v_0 (F_2)$, a contradiction to $\mathsf v_0 (F_2) \le |F_2| \le n_1-1$.

This implies that $n_2=2n_1$ and $\mathsf v_0 (F_1)+\mathsf v_g (F_1) = \mathsf v_0 (F_2) + 2$. Since $2=\mathsf v_g (F_1)+\mathsf v_{-g}(F_2)$, we infer that $\mathsf v_g (F_1) \in [1,2]$.

Suppose $\mathsf v_g (F_1)=2$. Then $\mathsf v_{-g}(F_2)=0$ and $\mathsf v_0 (F_1)=\mathsf v_0 (F_2)=0$, and we have $F_1= \big( (n_1-1)g \big) g^2$ and $F_2 = \big( -(n_1-1)g \Big)$.
Consider a factorization $z_2$ of $SF_2$ containing the atom $X = \big( -(n_1-1)g\big) (-g)^{n_1+1}$. This gives rise to a factorization
\[
z_1= z_2 X^{-1} \Big( \big( (n_1-1)g \big) (-g)^{n_1-1} \Big) \big( (-g)g \big)^2
\]
of length $|z_1|=|z_2|+2$, a contradiction.

Suppose $\mathsf v_g (F_1)=1$. Then $\mathsf v_{-g}(F_2)=1$, $\mathsf v_0(F_1)=1$, $\mathsf v_0 (F_2)=0$, and we have $F_1= \big( (n_1-1)g \big) g 0$ and $F_2 = \big( -(n_1-1)g \Big) (-g)$.
Consider a factorization $z_2$ of $SF_2$ containing the atom $X = \big( -(n_1-1)g\big) (-g)^{n_1+1}$. This gives rise to a factorization
\[
z_1= z_2 X^{-1} \Big( \big( (n_1-1)g \big) (-g)^{n_1-1} \Big) \big( (-g)g \big) 0
\]
of length $|z_1|=|z_2|+2$, a contradiction.

\medskip
\noindent CASE 4.3: \  $g \nmid F_1$ and $(-g) \t F_1$.

Then $\mathsf v_g (F_1)=0$ and $\mathsf v_{-g}(F_2)=0$ and hence
\[
2n_1-2 \equiv \mathsf v_g (F_2) + \mathsf v_{-g} (F_1) \mod n_2 \,.
\]
This implies that $\mathsf v_g (F_2) = \mathsf v_{-g} (F_1) =n_1-1$ and hence $|F_1| \ge n_1$ and $|F_2| \ge n_1$, a contradiction.

\medskip
\noindent CASE 4.4: \  $g \nmid F_1$ and $(-g) \nmid F_1$.

Then $\mathsf v_g (F_1)=0=\mathsf v_{-g}(F_1)$ and hence
\[
2n_1-2 \equiv \mathsf v_g (F_2) - \mathsf v_{-g} (F_2) \mod n_2 \,.
\]
If $n_2 \ge 3n_1$, then $|F_2| \ge n_1$, a contradiction. Thus $n_2=2n_1$ and hence $\mathsf v_g (F_2)= \mathsf v_{-g}(F_2)-2$.
Therefore we obtain that
\[
SF_2= \big( g^{n_2})^{l_1} \big( (-g)^{n_2} \big)^{l_2} \big( (g(-g) \big)^{l_3 + \mathsf v_{g}(F_2)-(n_1-1)}(-g)^{n_2} \big( ((-n_1+1)g) g^{n_1-1} \big) 0^{l_4 + \mathsf v_0 (F_2)}
\]
Therefore
\[
m_1= l_1+l_2+l_3+l_4+1+\mathsf v_0 (F_1) \in L_k
\]
and
\[
m_2= l_1+l_2+l_3+l_4+\mathsf v_g (F_2)-(n_1-1)+2+\mathsf v_0 (F_2) \in L_k
\]
which implies that $m_1-m_2=\mathsf v_0 (F_1)-\mathsf v_0 (F_2) - \mathsf v_g (F_2)+(n_1-1)-1$ is congruent to either $0$ or to $n_1-2$ or to $n_2-n_1$ modulo $n_2-2$. We distinguish three cases.

\smallskip
\noindent CASE 4.4.1: \ $\mathsf v_0 (F_2)+\mathsf v_g (F_2) \equiv \mathsf v_0 (F_1) + n_1-2 \mod n_2-2$.

This implies that $\mathsf v_0 (F_2)+\mathsf v_g (F_2) = \mathsf v_0 (F_1) + n_1-2$, and hence $|F_2| \ge \mathsf v_{-g} (F_2) \ge \mathsf v_g (F_2)+2 \ge n_1$, a contradiction.

\smallskip
\noindent CASE 4.4.2: \ $\mathsf v_0 (F_2)+\mathsf v_g (F_2) + (n_1-2) \equiv \mathsf v_0 (F_1) + n_1-2 \mod n_2-2$.

This implies that $\mathsf v_0 (F_2)+\mathsf v_g (F_2) = \mathsf v_0 (F_1)$ and hence $\mathsf v_0 (F_2)=0$. Therefore we obtain that $F_1 = \big( (n_1-1)g \big) 0^{\mathsf v_0 (F_1)}$ and $F_2 = \big( -(n_1-1)g \big) g^{\mathsf v_0 (F_1)} (-g)^{\mathsf v_0 (F_1)+2}$. Now consider a factorization $z_1$ of $SF_1$ containing the atom $X = \big( (n_1-1)g \big) g^{n_1+1}$. This gives rise to a factorization
\[
z_2 = z_1 X^{-1} \Big( \big( -(n_1-1)g\big) g^{n_1-1} \Big) \big( (-g)g \big)^{\mathsf v_0 (F_1)+2} 0^{-\mathsf v_0 (F_1)}
\]
of length $|z_2|=|z_1|+2$, a contradiction.

\smallskip
\noindent CASE 4.4.3: \ $\mathsf v_0 (F_2)+\mathsf v_g (F_2) + (n_2-n_1) \equiv \mathsf v_0 (F_1) + n_1-2 \mod n_2-2$.

Since $n_2=2n_1$,  the congruence simplifies to $\mathsf v_0 (F_2)+\mathsf v_g (F_2) + 2 \equiv \mathsf v_0 (F_1) \mod n_2-2$ which implies that $\mathsf v_0 (F_2)+\mathsf v_g (F_2) + 2 = \mathsf v_0 (F_1)$. Thus $\mathsf v_0 (F_2)=0$, $F_1 = \big( (n_1-1)g \big) 0^{\mathsf v_0 (F_1)}$, and $F_2 = \big( -(n_1-1)g \big) g^{\mathsf v_0 (F_1)-2} (-g)^{\mathsf v_0 (F_1)}$.
Now consider a factorization $z_1$ of $SF_1$ containing the atom $X = \big( (n_1-1)g \big) (-g)^{n_1-1}$. This gives rise to a factorization
\[
z_2 = z_1 X^{-1} \Big( \big( -(n_1-1)g\big) (-g)^{n_1+1} \Big) \big( (-g)g \big)^{\mathsf v_0 (F_1)-2} 0^{-\mathsf v_0 (F_1)}
\]
of length $|z_2|=|z_1|-2$, a contradiction.
\qed{(Proof of {\bf A6})}

\smallskip
We state the final assertion
\begin{enumerate}
\item[{\bf A7.}\,] $n_1-1 \le \mathsf c_{\mathcal B_{\langle g \rangle} (G)} (D_k)$.
\end{enumerate}

\smallskip
{\it Proof of} \,{\bf A7}.\,  By {\bf A5}, we have
\[
\mathsf L_{\mathcal B (G)} (B_k) \ \ = \bigcup_{z \in \mathsf Z_{\mathcal B_{\langle g \rangle} (G)} (D_k)} \mathsf L_{\mathcal B (\langle g \rangle)}  \big( C_k \sigma (z) \big) \,,
\]
and the union on the right hand side consists of at least two distinct sets which are not contained in each other.
Assume to the contrary that $\mathsf c_{\mathcal B_{\langle g \rangle} (G)} (D_k) \le n_1-2$ and choose a factorization $z_0 \in \mathsf Z_{\mathcal B_{\langle g \rangle}(G)} (D_k)$.

We assert that for each $z \in \mathsf Z_{\mathcal B_{\langle g \rangle}(G)} (D_k)$ there exists an $l (z) \in \Z$ such that $\sigma (z) = \sigma (z_0) 0^{-l (z)} \big( (-g)g \big)^{ l (z)}$. Let $z \in \mathsf Z_{\mathcal B_{\langle g \rangle}(G)} (D_k)$ be given, and let $z_0, \ldots, z_k=z$ be an $(n_1-2)$-chain of factorizations concatenating $z_0$ and $z$. Since $\mathsf d (z_{i-1}, z_i) < n_1-1$, it follows that the pair $(z_{i-1},z_i)$ is of type (ii) in {\bf A6} for each $i \in [1,k]$. Therefore $\sigma (z_i) = \sigma (z_{i-1}) 0^{-l_i} \big( (-g)g \big)^{l_i}$ for some $l_i \in \Z$ and each $i \in [1,k]$, and hence the assertion follows with $l (z) = l_1+ \ldots + l_k$.

We choose a factorization $z^* \in \mathsf Z_{\mathcal B_{\langle g \rangle}(G)} (D_k)$ such that
\[
l (z^*) = \max \{ l (z) \mid z \in \mathsf Z_{\mathcal B_{\langle g \rangle}(G)} (D_k) \} \,,
\]
and assert that
\[
\mathsf L_{\mathcal B (\langle g \rangle)} ( C_k \sigma (z)) \subset \mathsf L_{\mathcal B (\langle g \rangle)} (C_k \sigma (z^*)) \quad \text{for each} \quad z \in \mathsf Z_{\mathcal B_{\langle g \rangle}(G)} (D_k) \,.
\]
Let $z \in \mathsf Z_{\mathcal B_{\langle g \rangle}(G)} (D_k)$ be given. Then $\sigma (z^*) = \sigma (z) 0^{-(l(z^*)-l(z))} \big( (-g)g \big)^{l(z^*)-l(z)}$. If
\[
y \in \mathsf Z_{\mathcal B (\langle g \rangle)} (C_k \sigma (z)) \,, \quad \text{ then} \quad y 0^{-(l(z^*)-l(z))} \big( (-g)g \big)^{l(z^*)-l(z)} \in \mathsf Z_{\mathcal B (\langle g \rangle)} (C_k \sigma (z^*))
\]
is a factorization of length $|y|$, and hence $\mathsf L_{\mathcal B (\langle g \rangle)} ( C_k \sigma (z)) \subset \mathsf L_{\mathcal B (\langle g \rangle)} (C_k \sigma (z^*))$.

Therefore we obtain that
\[
\mathsf L_{\mathcal B (G)} (B_k) \ \ = \bigcup_{z \in \mathsf Z_{\mathcal B_{\langle g \rangle} (G)} (D_k)} \mathsf L_{\mathcal B (\langle g \rangle)}  \big( C_k \sigma (z) \big)  \ \ = \ \ \mathsf L (C_k \sigma (z^*)) \,, \ \ \text{a contradiction to the fact}
\]
that the union consists of  least two distinct sets  not contained in each other. \qed{(Proof of {\bf A7})}

\smallskip
Using {\bf A7} and Proposition \ref{2.5}.2, we infer that
\[
n_1-1 \le \mathsf c_{\mathcal B_{\langle g \rangle} (G)} (D_k) \le \mathsf c ( \mathcal B_{\langle g \rangle} (G) ) = \mathsf c (\mathcal B (G/\langle g \rangle)) \le \mathsf D (G/\langle g \rangle) = \mathsf D (H) \le n_1 \,.
\]
We distinguish two cases.

\smallskip
\noindent CASE 1: \ $\mathsf c ( \mathcal B (G/\langle g \rangle)) = n_1$.

Then $\mathsf D (G/\langle g \rangle) = n_1$, Proposition \ref{2.4}.1 implies that $G/\langle g \rangle$ is either cyclic of order $n_1$ or an elementary $2$-group of rank $n_1-1$. Since $H \cong G/\langle g \rangle$, it follows that $G \cong C_{n_1} \oplus C_{n_2}$ or $G \cong C_2^{n_1-1} \oplus C_{n_2}$.

\smallskip
\noindent CASE 2: \ $\mathsf c ( \mathcal B (G/\langle g \rangle)) = n_1-1$.

We distinguish two cases.

\smallskip
\noindent CASE 2.1: \ $\mathsf D (G/\langle g \rangle) = n_1$.

Then Proposition \ref{2.4}.2 implies that $G/\langle g \rangle$ is isomorphic either to $C_2 \oplus C_{n_1-1}$, where $n_1-1$ is even, or to $C_2^{n_1-4} \oplus C_4$.  Since $H \cong G/\langle g \rangle$, it follows that $G \cong C_2 \oplus C_{n_1-1} \oplus C_{n_2}$ or $G \cong C_2^{n_1-4} \oplus C_4 \oplus C_{n_2}$.

\smallskip
\noindent CASE 2.2: \ $\mathsf D (G/\langle g \rangle) = n_1-1$.

Then $\mathsf c ( \mathcal B (G/\langle g \rangle)) =\mathsf D (G/\langle g \rangle) = n_1-1$, and (again by Proposition \ref{2.4}.1) $G/\langle g \rangle$ is cyclic of order $n_1-1$ or an elementary $2$-group of rank $n_1-2$. If $G/\langle g \rangle$ is cyclic, then $G$ has rank two and $\mathsf d (G) = \mathsf d (G/\langle g \rangle) + \mathsf d (\langle g \rangle) = n_1-2+n_2-1 < n_1+n_2-2 = \mathsf d (G)$, a contradiction. Thus $G/\langle g \rangle$ is an elementary $2$-group and  $G \cong C_2^{n_1-2} \oplus C_{n_2}$.
\end{proof}

\begin{proof}[Proof of Theorem \ref{1.1}]
Let  $G$ be an abelian group such that $\mathcal L (G) = \mathcal L (C_{n_1} \oplus C_{n_2})$ where $n_1, n_2 \in \N$ with $n_1 \t n_2$ and $n_1+n_2 > 4$.

Proposition \ref{6.1} implies that $G$ is finite with $\exp (G) = n_2$ and $\mathsf d (G) = \mathsf d (C_{n_1} \oplus C_{n_2})=n_1+n_2-2$. If  $n_1 = n_2$, then $G \cong C_{n_1} \oplus C_{n_2}$ by Proposition \ref{6.1}.2.
Thus we may suppose that $n_1 < n_2$, and we set $G = H \oplus C_{n_2}$ where $H \subset G$ is a subgroup with $\exp (H) \t n_2$. If $n_1 \in [1, 5]$, then the assertion follows from Proposition \ref{6.2}.3, and hence we suppose that   $n_1 \ge 6$. Since $\mathcal L (G) = \mathcal L (C_{n_1} \oplus C_{n_2})$, Proposition \ref{3.5} implies that, for each $k \in \N$, the sets
\[
L_k = \Big\{(kn_2+3) + (n_1-2)+(n_2-2) \Big\} \cup \Big(
(2k+3) + \{0, n_1-2, n_2-2 \} + \{\nu (n_2-2) \mid \nu \in [0,k] \}  \Big)
\]
are in $\mathcal L (G)$. Therefore Proposition \ref{6.5} implies that $G$ is isomorphic to one of the following groups
\[
C_{n_1}\oplus C_{n_2} \,, \  C_2^{s} \oplus C_{n_2} \ \text{with} \  s \in \{n_1-2, n_1-1\} \,, C_2^{n_1-4} \oplus C_4 \oplus C_{n_2} \,,
\ C_2 \oplus C_{n_1-1} \oplus C_{n_2} \ \text{ with} \  2 \t (n_1-1) \t n_2   \,.
\]
Since
\[
\mathsf d^* (C_2^{n_1-4} \oplus C_4 \oplus C_{n_2}) = n_1+n_2-2 = \mathsf d (G)
\quad \text{and} \quad
\mathsf d^* ( C_2 \oplus C_{n_1-1} \oplus C_{n_2}) = n_1+n_2-2 = \mathsf d ( G ) \,,
\]
Proposition \ref{6.2}.2 implies that $G$ cannot be isomorphic to any of these two groups. Proposition \ref{3.7} (with $k=0$, $n=n_2$, and $r=n_1-1$) implies that
\[
\{2, n_2, n_1+n_2-2\} \in \mathcal L (C_2^{n_1-2} \oplus C_{n_2}) \subset \mathcal L (C_2^{n_1-1} \oplus C_{n_2}) \,.
\]
However, Proposition \ref{5.1} shows that $\{2, n_2, n_1+n_2-2\} \notin \mathcal L (C_{n_1} \oplus C_{n_2}) = \mathcal L (G)$ whence $G \cong C_{n_1} \oplus C_{n_2}$.
\end{proof}

\providecommand{\bysame}{\leavevmode\hbox to3em{\hrulefill}\thinspace}
\providecommand{\MR}{\relax\ifhmode\unskip\space\fi MR }
\providecommand{\MRhref}[2]{%
  \href{http://www.ams.org/mathscinet-getitem?mr=#1}{#2}
}
\providecommand{\href}[2]{#2}


\begin{thebibliography}{10}

\bibitem{Ba69a}
P.C. Baayen, \emph{${C}_2 \oplus {C}_2 \oplus {C}_2 \oplus {C}_{2n}!$}, Reports
  ZW-1969-006, Math. Centre, Amsterdam, 1969.

\bibitem{Ba-Ge14b}
N.R. Baeth and A.~Geroldinger, \emph{Monoids of modules and arithmetic of
  direct-sum decompositions}, Pacific J. Math. \textbf{271} (2014), 257 -- 319.

\bibitem{Ba-Ho09a}
N.R. Baeth and J.~Hoffmeier, \emph{Atoms of the relative block monoid}, Involve
  \textbf{2} (2009), 29 -- 36.

\bibitem{Ba-Wi13a}
N.R. Baeth and R.~Wiegand, \emph{Factorization theory and decomposition of
  modules}, Am. Math. Mon. \textbf{120} (2013), 3 -- 34.

\bibitem{B-G-G-P13a}
P.~Baginski, A.~Geroldinger, D.J. Grynkiewicz, and A.~Philipp, \emph{Products
  of two atoms in {K}rull monoids and arithmetical characterizations of class
  groups}, Eur. J. Comb. \textbf{34} (2013), 1244 -- 1268.

\bibitem{C-F-G-O16}
S.T. Chapman, M.~Fontana, A.~Geroldinger, and B.~Olberding (eds.),
  \emph{Multiplicative {I}deal {T}heory and {F}actorization {T}heory},
  Proceedings in Mathematics and Statistics, vol. 170, Springer, 2016.

\bibitem{Ga-Ge00}
W.~Gao and A.~Geroldinger, \emph{Systems of sets of lengths {II}}, Abh. Math.
  Semin. Univ. Hamb. \textbf{70} (2000), 31 -- 49.

\bibitem{Ga-Ge03b}
\bysame, \emph{On zero-sum sequences in $\mathbb{Z} /n \mathbb{Z} \oplus
  \mathbb{Z} /n \mathbb{Z}$}, Integers \textbf{3} (2003), Paper A08, 45p.

\bibitem{Ga-Ge-Gr10a}
W.~Gao, A.~Geroldinger, and D.J. Grynkiewicz, \emph{Inverse zero-sum problems
  {III}}, Acta Arith. \textbf{141} (2010), 103 -- 152.

\bibitem{Ge90c}
A.~Geroldinger, \emph{Systeme von {L}{\"{a}}ngenmengen}, Abh. Math. Semin.
  Univ. Hamb. \textbf{60} (1990), 115 -- 130.

\bibitem{Ge09a}
\bysame, \emph{Additive group theory and non-unique factorizations},
  Combinatorial {N}umber {T}heory and {A}dditive {G}roup {T}heory, Advanced
  Courses in Mathematics CRM Barcelona, Birkh{\"a}user, 2009, pp.~1 -- 86.

\bibitem{Ge16c}
\bysame, \emph{Sets of lengths}, Amer. Math. Monthly \textbf{123} (2016), 960
  -- 988.

\bibitem{Ge-Gr-Sc11a}
A.~Geroldinger, D.J. Grynkiewicz, and W.A. Schmid, \emph{The catenary degree of
  {K}rull monoids {I}}, J. Th{\'e}or. Nombres Bordx. \textbf{23} (2011), 137 --
  169.

\bibitem{Ge-Gr-Yu15}
A.~Geroldinger, D.J. Grynkiewicz, and P.~Yuan, \emph{On products of $k$ atoms
  {II}}, Mosc. J. Comb. Number Theory \textbf{5} (2015), 73 -- 129.

\bibitem{Ge-HK06a}
A.~Geroldinger and F.~Halter-Koch, \emph{Non-{U}nique {F}actorizations.
  {A}lgebraic, {C}ombinatorial and {A}nalytic {T}heory}, Pure and Applied
  Mathematics, vol. 278, Chapman \& Hall/CRC, 2006.

\bibitem{Ge-Ra-Re15c}
A.~Geroldinger, S.~Ramacher, and A.~Reinhart, \emph{On $v$-{M}arot {M}ori rings
  and $\rm{C}$-rings}, J. Korean Math. Soc. \textbf{52} (2015), 1 -- 21.

\bibitem{Ge-Ru09}
A.~Geroldinger and I.~Ruzsa, \emph{Combinatorial {N}umber {T}heory and
  {A}dditive {G}roup {T}heory}, Advanced Courses in Mathematics - CRM
  Barcelona, Birkh{\"a}user, 2009.

\bibitem{Ge-Yu12b}
A.~Geroldinger and P.~Yuan, \emph{The set of distances in {K}rull monoids},
  Bull. Lond. Math. Soc. \textbf{44} (2012), 1203 -- 1208.

\bibitem{Ge-Zh15b}
A.~Geroldinger and Q.~Zhong, \emph{The catenary degree of {K}rull monoids
  {II}}, J. Australian Math. Soc. \textbf{98} (2015), 324 -- 354.

\bibitem{Ge-Zh16a}
\bysame, \emph{The set of minimal distances in {K}rull monoids}, Acta Arith.
  \textbf{173} (2016), 97 -- 120.

\bibitem{Ge-Zh17b}
\bysame, \emph{A characterization of class groups via sets of lengths {II}}, J.
  Th{\'e}or. Nombres Bordx. \textbf{29} (2017), 327 -- 346.

\bibitem{Gr13a}
D.J. Grynkiewicz, \emph{Structural {A}dditive {T}heory}, Developments in
  Mathematics 30, Springer, Cham, 2013.

\bibitem{HK83a}
F.~Halter-Koch, \emph{Factorization of algebraic integers}, Grazer Math.
  Berichte \textbf{191} (1983).

\bibitem{HK92e}
\bysame, \emph{Relative block semigroups and their arithmetical applications},
  Comm. Math. Univ. Carol. \textbf{33} (1992), 373 -- 381.

\bibitem{HK98}
\bysame, \emph{Ideal {S}ystems. {A}n {I}ntroduction to {M}ultiplicative {I}deal
  {T}heory}, Marcel Dekker, 1998.

\bibitem{Ka81b}
J.~Kaczorowski, \emph{A pure arithmetical characterization for certain fields
  with a given class group}, Colloq. Math. \textbf{45} (1981), 327 -- 330.

\bibitem{Na74}
W.~Narkiewicz, \emph{Elementary and {A}nalytic {T}heory of {A}lgebraic
  {N}umbers}, Polish {S}cientific {P}ubl., 1974.

\bibitem{Re10c}
C.~Reiher, \emph{A proof of the theorem according to which every prime number
  possesses property {B}}, PhD Thesis, Rostock, 2010 (2010).

\bibitem{Ru83}
D.E. Rush, \emph{An arithmetic characterization of algebraic number fields with
  a given class group}, Math. Proc. Camb. Philos. Soc. \textbf{94} (1983), 23
  -- 28.

\bibitem{Sc04a}
W.A. Schmid, \emph{Arithmetic of block monoids}, Math. Slovaca \textbf{54}
  (2004), 503 -- 526.

\bibitem{Sc08d}
\bysame, \emph{Periods of sets of lengths{\rm \,:} a quantitative result and an
  associated inverse problem}, Colloq. Math. \textbf{113} (2008), 33 -- 53.

\bibitem{Sc09c}
\bysame, \emph{Arithmetical characterization of class groups of the form
  $\mathbb{Z} /n \mathbb{Z} \oplus \mathbb{Z} /n \mathbb{Z}$ via the system of
  sets of lengths}, Abh. Math. Semin. Univ. Hamb. \textbf{79} (2009), 25 -- 35.

\bibitem{Sc09b}
\bysame, \emph{Characterization of class groups of {K}rull monoids via their
  systems of sets of lengths{\rm \,:} a status report}, Number {T}heory and
  {A}pplications, Hindustan Book Agency, 2009, pp.~189 -- 212.

\bibitem{Sc09a}
\bysame, \emph{A realization theorem for sets of lengths}, J. Number Theory
  \textbf{129} (2009), 990 -- 999.

\bibitem{Sc10b}
\bysame, \emph{Inverse zero-sum problems {II}}, Acta Arith. \textbf{143}
  (2010), 333 -- 343.

\bibitem{Sc11b}
\bysame, \emph{The inverse problem associated to the {D}avenport constant for
  ${C}_2 \oplus {C}_2 \oplus {C}_{2n} $, and applications to the arithmetical
  characterization of class groups}, Electron. J. Comb. \textbf{18(1)} (2011),
  Research Paper 33.

\bibitem{Zh18b}
Q.~Zhong, \emph{A characterization of finite abelian groups via sets of lengths
  in transfer {K}rull monoids}, Commun. Algebra \textbf{46} (2018), 4021 --
  4041.

\bibitem{Zh18a}
\bysame, \emph{Sets of minimal distances and characterizations of class groups
  of {K}rull monoids}, Ramanujan J. \textbf{45} (2018), 719 -- 737.

\end{thebibliography}
\end{document}